\DeclareMathOperator{\Bog}{Bog}
\DeclareMathOperator{\Div}{div}
\newcommand{\R}{\mathbb R}
\newcommand{\N}{\mathbb N}
\newcommand{\dd}{\mathrm d}
\newcommand{\dt}{\,\mathrm{d} t}
\newcommand{\dx}{\,\mathrm{d}x}
\newcommand{\dz}{\,\mathrm{d}z}
\newcommand{\dy}{\,\mathrm{d}y}
\newcommand{\dxt}{\,\mathrm{d}x\,\mathrm{d}t}
\newcommand{\ds}{\,\mathrm{d}\sigma}
\newcommand{\dxs}{\,\mathrm{d}x\,\mathrm{d}\sigma}
\newcommand{\mt}{\Omega}
\newtheorem{theorem}{Theorem}[section]
\newtheorem{lemma}[theorem]{Lemma}
\newtheorem{remark}[theorem]{Remark}
\theoremstyle{definition}
\newtheorem{definition}[theorem]{Definition}
\begin{document}

\title[Boundary regularity Navier--Stokes]
{Partial boundary regularity for the Navier--Stokes equations in irregular domains}

\author{Dominic Breit}
\address{Department of Mathematics, Heriot-Watt University, Edinburgh, EH14 4AS, United Kingdom}
\email{d.breit@hw.ac.uk}
\address{Institute of Mathematics, TU Clausthal, Erzstra\ss e 1, 38678 Clausthal-Zellerfeld, Germany}
\email{dominic.breit@tu-clausthal.de}



\subjclass[2010]{76Nxx; 76N10; 35Q30 ; 35Q84; 82D60}

\date{\today}


\keywords{}

\begin{abstract}
We prove partial regularity of suitable weak solutions to the Navier--Stokes equations at the boundary in irregular domains. In particular, we provide a criterion which yields continuity of the velocity field in a boundary point and obtain solutions which are continuous in a.a. boundary boundary point (their existence is a consequence of a new maximal regularity result for the Stokes equations in domains with minimal regularity).
We suppose that we have a Lipschitz boundary with locally small Lipschitz constant which belongs to the fractional Sobolev space $W^{2-1/p,p}$ 
for some $p>\frac{15}{4}$. The same result was previously only known under the much stronger assumption of a $C^2$-boundary.
\end{abstract}

\maketitle

\section{Introduction}
We consider the motion of a viscous incompressible fluid in a physical body $\Omega$ -- a bounded Lipschitz domain in $\R^3$ during the interval $\mathcal I=(0,T)$. 
The motion of the fluid is governed by the Navier--Stokes equations
 \begin{align} \varrho\big(\partial_t \bfu+(\bfu\cdot\nabla)\bfu\big)&=\mu\Delta\bfu-\nabla \pi+\bff,\quad \Div\bfu=0,&\label{1}
 \end{align}
 in $\mathcal I\times\Omega$, where $\bfu:\mathcal I\times\Omega\rightarrow\R^3$ is the velocity field and $\pi:\mathcal I\times\Omega\rightarrow\R$ the pressure function. The quantity $\bff:\mathcal I\times\Omega\rightarrow \R^3$ is an external forcing, $\varrho$ is the density and $\mu$ the viscosity -- two positive constants which will be set to 1 in the following for simplicity.
 
The existence of weak solutions to \eqref{1} has been established in the 1930's by Leray \cite{Ler}.
 The regularity of solutions to \eqref{1} is an outstanding open problem which has been attracting mathematicians for decades -- still we are far away from a complete understanding (though some remarkable recent progress on the non-uniqueness of weak solutions has been made based on the method
of convex integration, cf. \cite{Col,BV,BCV}). 
 The state of the art today is partial regularity. This means that the velocity field is locally bounded/H\"older continuous outside a negligible set of the space-time cylinder (further regularity properties inside this set can be deduced) with measure zero. Such an analysis has been initiated in a series of papers by Sheffer, see \cite{Sh1}--\cite{Sh4}. A further milestone is the work by Caffarelli-Kohn-Nirenberg in \cite{CKN} on suitable weak solutions. These solutions satisfy a form of the energy inequality which is localised in space-time (hereafter called local energy inequality) which reads as
 \begin{equation}\label{energylocal0a}
\begin{split}
\int_ {\mt}\frac{1}{2}&\zeta\big| \bfu(t)\big|^2\dx+\int_0^t\int_ {\mt}\zeta|\nabla \bfu |^2\dx\ds\\& \leq\int_0^t\int_{\mt}\frac{1}{2}\Big(| \bfu|^2(\partial_t\zeta+\Delta\zeta)+\big(|\bfu|^2+2\pi\big)\bfu\cdot\nabla\zeta\Big)\dx\ds+\int_0^t\int_{\mt}\zeta\bff\cdot\bfu\dxs
\end{split}
\end{equation}
for any $\zeta\in C_c^\infty(\mathcal I\times\Omega)$ with $\zeta\geq0$.
  This is a piece of information which has to be included into the definition of a solution as it is otherwise lost in the construction procedure. It is not known (and maybe not even expected) if any weak solution satisfies a local energy inequality (in fact, the same problem appears for the global energy inequality too). As demonstrated in \cite{CKN}, an analysis of the local regularity properties of suitable weak solutions is possible. In particular, a criterion for a solution to \eqref{1} is provided, which yields boundedness of the velocity field in a given point in space-time.
  Some further improvements and simplifications can be found in \cite{CL,LaSe,L}.
 
 The regularity of solutions to \eqref{1} at the boundary seems to be less understood. A first results has been achieved by Sheffer in \cite{Sh4} and a more systematic analysis was started by Seregin in \cite{Se1}--\cite{Se3}. Still all these results consider the case of a flat boundary. A flat boundary is the easiest case to consider and one expects that the same results also apply for curved boundaries provided they are sufficiently smooth. Indeed, a corresponding theory for non-flat boundaries of class $C^2$ has been obtained in \cite{SeShSo}. Many applications naturally lead, however, to a boundary of significantly less regularity.
 This is particularly motivated by problems from fluid-structure interaction, where the boundary is described by the displacement of an elastic structure. The latter is the solution to a partial differential equation on its own and hence only of limited regularity (we comment further on this in Section \ref{sec:fsi}).
 
 We aim to prove partial regularity of solutions to \eqref{1} at the boundary under minimal assumptions on the regularity of the boundary. Our analysis is based on the concept of boundary suitable weak solutions as in \cite{Se1,SeShSo}.
  They satisfy a local energy inequality near a certain part of the boundary (that is, \eqref{energylocal0a} for cut-off functions supported near a part of the boundary), see Definition \ref{def:weakSolution}. So far, even their existence was not known for boundaries with regularity below $C^2$. In order to improve this
  we prove a new result on the maximal $L^r_tL^p_x$-theory for the unsteady Stokes system under minimal assumptions
  on the boundary regularity, cf. Theorem \ref{thm:stokesunsteady}.
  The main assumption is that $\Omega$ is a Lipschitz domain with locally small Lipschitz constant and that the local coordinates (we make this concept precise in Section \ref{sec:para}.) belong to the class of Sobolev multipliers on $W^{2-1/p,p}$ -- the trace space of $W^{2,p}$ to which the velocity field belongs.
  The $L^r_tL^p_x$-theory just described provides a parabolic counterpart of the recent results on the steady Stokes system from \cite{Br}
  and yields the existence of boundary suitable weak solutions to the Navier--Stokes system \eqref{1} in irregular domains,
cf. Theorem \ref{thm:existence}.

Eventually, we prove a criterion for boundary suitable weak solutions to \eqref{1} which implies continuity of the velocity field in a boundary point (see Theorem \ref{thm:main}) and hence obtain solutions which are continuous
  in almost any boundary boundary point (see Theorem \ref{thm:main'}). Our main assumption is that the boundary coordinates belong to the class of Sobolev multipliers on 
\begin{align}\label{ass:main}  
W^{2-\frac{1}{p},p}(\R^2)\quad\text{for some}\quad p>\tfrac{15}{4}
\end{align}
 with sufficiently small norm. This class includes Lipschitz boundaries with small Lipschitz constant belonging to the class $W^{\sigma,p}(\R^2)$ for $\sigma>2-1/p$, see Remark \ref{rem:boundary}.
  
Our approach uses a flattening of the boundary by means of a transformation $\bfPhi\in W^{2,p}(\R^3)$ which is an extension of the function $\varphi$ describing the boundary locally. This leads to some kind of perturbed Navier--Stokes equations (or perturbed Stokes equations if the convective term is neglected).
 We provide a regularity theory for the perturbed Stokes system under minimal assumptions on the coefficients resulting from the flattening (this is similar to that of the Stokes system in irregular domains mentioned above). This is used in the partial regularity proof via the blow-up technique.
 There are various stages in the proof of the blow up lemma (see Lemma \ref{Lemma}), which require restrictions on the regularity of $\bfPhi$ (or that of $\varphi$). The most restrictive one is related to the decay of the pressure $\tilde{\mathfrak q}$ of some perturbed Stokes system: 
 We have to show that
\begin{align}\label{eq:1401}
\tau^{3}\bigg(\dashint_{\mathcal I_\tau}\dashint_{\mathcal B^+_\tau}  |\tilde{\mathfrak{q}}  - (\tilde{\mathfrak q})_{\mathcal B^+_{\tau}} |^{5/3} \dz\ds\bigg)^{\frac{9}{5}}\lesssim\tau^{2\alpha}
\end{align}
for some $\alpha>0$ to arrive at a contradiction. Here $\mathcal Q^+_\tau=\mathcal I_\tau\times\mathcal B_\tau^+$ denotes a parabolic half-cylinder centered at the flat boundary with radius $\tau>0$. Estimate \eqref{eq:1401} can be proved by Poincar\'e's inequality if $\nabla\tilde{\mathfrak{q}}$ belongs to the space $L^{5/3}_tL^p_x$ with $p>\frac{15}{4}$. 
The term in \eqref{eq:1401} results from our unusual choice of excess functional related to the integrability $L_t^{5/3}L_x^{5/3}$ for the pressure. In order to fine-tune the assumptions on $p$ in \eqref{ass:main} (see Section \ref{sec:opt} for some discussion) we must choose the time-integrability of the pressure as large as possible, where the upper limit is $5/3$ due to the integrability of the convective term.

 Details on the $L^{r}_tL^p_x$-theory for the perturbed Stokes system  can be found in Lemma \ref{lem:31} (which also implies a useful Caccioppoli-type inequality in Lemma \ref{lem:32}). The latter also yields an estimate for the velocity field $\overline\bfu$ of the perturbed Stokes system in the same space which implies its continuity (we expect this to be true also under weaker assumptions, see the discussion in Section \ref{sec:opt}). This is needed similarly to \eqref{eq:1401} for the decay of the perturbed velocity field.
Finally, the transformation of the local energy inequality from the original to the flat geometry requires some assumptions on the boundary, though weaker than those already mentioned (we refer to the estimates in \eqref{eq:Im2}).
At first glance it seems that one needs $\bfPhi\in W^{2,\infty}$ to control this term. A more careful analysis reveals, however, that it is sufficient if $\bfPhi$ belongs to the Sobolev multiplier class on $W^{2,3/2}$. 
Although it remains unclear at this stage if \eqref {ass:main} is optimal for partial boundary regularity, we believe that it will be very difficult to relax it.  
We comment further on this in Section \ref{sec:opt}.

\section{Preliminaries and results}
\subsection{Conventions}
We write $f\lesssim g$ for two non-negative quantities $f$ and $g$ if there is a $c>0$ such that $f\leq\,c g$. Here $c$ is a generic constant which does not depend on the crucial quantities and can change from line to line. If necessary we specify particular dependencies. We write $f\approx g$ if $f\lesssim g$ and $g\lesssim f$.
We do not distinguish in the notation for the function spaces between scalar- and vector-valued functions. However, vector-valued functions will usually be denoted in bold case.

\subsection{Classical function spaces}
Let $\mathcal O\subset\R^m$, $m\geq 1$, be open.
Function spaces of continuous or $\alpha$-H\"older continuous functions, $\alpha\in(0,1)$,
 are denoted by $C(\overline{\mathcal O})$ or $C^{0,\alpha}(\overline{\mathcal O})$ respectively. Similarly, we write $C^1(\overline{\mathcal O})$ and $C^{1,\alpha}(\overline{\mathcal O})$.
We denote as usual by $L^p(\mathcal O)$ and $W^{k,p}(\mathcal O)$ for $p\in[1,\infty]$ and $k\in\mathbb N$ Lebesgue and Sobolev spaces over $\mathcal O$. For a bounded domain $\mathcal O$ the space $L^p_\perp(\mathcal O)$ denotes the subspace of  $L^p(\mathcal O)$ of functions with zero mean, that is $(f)_{\mathcal O}:=\dashint_{\mathcal O}f\dx:=\mathcal L^m(\mathcal O)^{-1}\int_{\mathcal O}f\dx=0$.
 We denote by $W^{k,p}_0(\mathcal O)$ the closure of the smooth and compactly supported functions in $W^{k,p}(\mathcal O)$. This coincides with the functions vanishing $\mathcal H^{m-1}$ -a.e. on $\partial\mathcal O$ provided $\partial\mathcal O$ is sufficiently regular. 
 We also denote by $W^{-k,p}(\mathcal O)$ the dual of $W^{k,p}_0(\mathcal O)$.
  Finally, we consider the subspace
$W^{1,p}_{0,\Div}(\mathcal O)$ of divergence-free vector fields which is defined accordingly. 
We will use the shorthand notations $L^p_x$ and $W^{k,p}_x$ in the case of $3$-dimensional domains and   
$L^p_y$ and $W^{k,p}_y$ for $2$-dimensional sets.

For a separable Banach space $(X,\|\cdot\|_X)$ we denote by $L^p(0,T;X)$ the set of (Bochner-) measurable functions $u:(0,T)\rightarrow X$ such that the mapping $t\mapsto \|u(t)\|_{X}\in L^p(0,T)$. 
The set $C([0,T];X)$ denotes the space of functions $u:[0,T]\rightarrow X$ which are continuous with respect to the norm topology on $(X,\|\cdot\|_X)$.
 The space $W^{1,p}(0,T;X)$ consists of those functions from $L^p(0,T;X)$ for which the distributional time derivative belongs to $L^p(0,T;X)$ as well. 
We use the shorthand $L^p_tX$ for $L^p(0,T;X)$. For instance, we write $L^p_tW^{1,p}_x$ for $L^p(0,T;W^{1,p}(\mathcal O))$. Similarly, $W^{k,p}_tX$ stands for $W^{k,p}(0,T;X)$.

The space $C^{\alpha,\beta}([0,T]\times \overline{\mathcal O})$ with $\alpha,\beta\in(0,1]$ denotes the set of functions being $\alpha$-H\"older continuous
in $t\in[0,T]$ and $\beta$-H\"older continuous in $x\in\overline{\mathcal O}$.

\subsection{Fractional differentiability and Sobolev mulitpliers}
\label{sec:SM}
For $p\in[1,\infty)$ the fractional Sobolev space (Sobolev-Slobodeckij space) with differentiability $s>0$ with $s\notin\mathbb N$ will be denoted by $W^{s,p}(\mathcal O)$. For $s>0$ we write $s=\lfloor s\rfloor+\lbrace s\rbrace$ with $\lfloor s\rfloor\in\N_0$ and $\lbrace s\rbrace\in(0,1)$.
 We denote by $W^{s,p}_0(\mathcal O)$ the closure of the smooth and compactly supported functions in $W^{1,p}(\mathcal O)$. For $s>\frac{1}{p}$ this coincides with the functions vanishing $\mathcal H^{m-1}$ -a.e. on $\partial\mathcal O$ provided $\partial\mathcal O$ is regular enough. We also denote by $W^{-s,p}(\mathcal O)$ for $s>0$ the dual of $W^{s,p}_0(\mathcal O)$. Similar to the case of unbroken differentiabilities above we use the shorthand notations $W^{s,p}_x$  and $W^{s,p}_y$.. 
We will denote by $B^s_{p,q}(\R^m)$ the standard Besov spaces on $\R^m$ with differentiability $s>0$, integrability $p\in[1,\infty]$ and fine index $q\in[1,\infty]$. They can be defined (for instance) via Littlewood-Paley decomposition leading to the norm $\|\cdot\|_{B^s_{p,q}(\R^m)}$. 
 We refer to \cite{RuSi} and \cite{Tr,Tr2} for an extensive picture. 
 The Besov spaces $B^s_{p,q}(\mathcal O)$ for a bounded domain $\mathcal O\subset\R^m$ are defined as the restriction of functions from $B^s_{p,q}(\R^m)$, that is
 \begin{align*}
 B^s_{p,q}(\mathcal O)&:=\{f|_{\mathcal O}:\,f\in B^s_{p,q}(\R^m)\},\\
 \|g\|_{B^s_{p,q}(\mathcal O)}&:=\inf\{ \|f\|_{B^s_{p,q}(\R^m)}:\,f|_{\mathcal O}=g\}.
 \end{align*}
 If $s\notin\mathbb N$ and $p\in(1,\infty)$ we have $B^s_{p,p}(\mathcal O)=W^{s,p}(\mathcal O)$.
 
In accordance with \cite[Chapter 14]{MaSh} the Sobolev multiplier norm  is given by
\begin{align}\label{eq:SoMo}
\|\varphi\|_{\mathcal M^{s,p}(\mathcal O)}:=\sup_{\bfv:\,\|\bfv\|_{W^{s-1,p}(\mathcal O)}=1}\|\nabla\varphi\cdot\bfv\|_{W^{s-1,p}(\mathcal O)},
\end{align}
where $p\in[1,\infty]$ and $s\geq1$.
The space $\mathcal M^{s,p}(\mathcal O)$ of Sobolev multipliers is defined as those objects for which the $\mathcal M^{s,p}(\mathcal O)$-norm is finite. For $\delta>0$ we denote by  $\mathcal M^{s,p}(\mathcal O)(\delta)$ the subset of functions from
 $\mathcal M^{s,p}(\mathcal O)$ with $\mathcal M^{s,p}(\mathcal O)$-norm not exceeding $\delta$.
By mathematical induction with respect to $s$ one can prove for Lipschitz-continuous functions $\varphi$ that membership to $\mathcal M^{s,p}(\mathcal O)$  in the sense of \eqref{eq:SoMo} implies that
\begin{align}\label{eq:SoMo'}
\sup_{w:\,\|w\|_{W^{s,p}(\mathcal O)}=1}\|\varphi \,w\|_{W^{s,p}(\mathcal O)}<\infty.
\end{align}
The quantity \eqref{eq:SoMo'} also serves as customary definition of the Sobolev multiplier norm in the literature but \eqref{eq:SoMo} is more suitable for our purposes.

Let us finally collect some some useful properties of Sobolev multipliers.
By \cite[Corollary 14.6.2]{MaSh} we have
\begin{align}\label{eq:MSa}
\|\phi\|_{\mathcal M^{s,p}(\R^{m})}\lesssim\|\nabla\phi\|_{L^{\infty}(\R^m)},
\end{align}
provided that one of the following conditions holds:
\begin{itemize}
\item $p(s-1)<m$ and $\phi\in B^{s}_{\varrho,p}(\R^{m})$ with $\varrho\in\big[\frac{pm}{p(s-1)-1},\infty\big]$;
\item $p(s-1)=m$ and $\phi\in B^{s}_{\varrho,p}(\R^m)$ with $\varrho\in(p,\infty]$.
\end{itemize}
Note that the hidden constant in \eqref{eq:MSa} depends on the $B^{s}_{\varrho,p}(\R^{m})$-norm of $\phi$.
By \cite[Corollary 4.3.8]{MaSh} it holds
\begin{align}\label{eq:MSb}
\|\phi\|_{\mathcal M^{s,p}(\R^{m})}\approx
\|\nabla\phi\|_{W^{s-1,p}(\R^{m})} 
\end{align}
for $p(s-1)>m$. 
 Finally, we note the following rule about the composition with Sobolev multipliers which is a consequence of \cite[Lemma 9.4.1]{MaSh}. For open sets $\mathcal O_1,\mathcal O_2\subset\R^m$, $u\in W^{s,p}(\mathcal O_2)$ and a Lipschitz continuous function $\bfphi:\mathcal O_1\rightarrow\mathcal O_2$ with $\bfphi\in \mathcal M^{s,p}(\mathcal O_1)$ and Lipschitz continuous inverse $\bfphi^{-1}:\mathcal O_2\rightarrow\mathcal O_1$ we have
\begin{align}\label{lem:9.4.1}
\|u\circ\bfphi\|_{W^{s,p}(\mathcal O_1)}\lesssim \|u\|_{W^{s,p}(\mathcal O_2)}
\end{align}
with constant depending on $\bfphi$. 

\subsection{Parametrisation of domains}\label{sec:para}
In this section we present the necessary framework to parametrise
the boundary of the underlying domain $\Omega\subset\R^3$ by local maps of a certain regularity. This yields, in particular, a rigorous definition of a  $\mathcal M^{s,p}$-boundary. We follow the presentation from \cite{Br}.

We assume that $\partial\Omega$ can be covered by a finite
number of open sets $\mathcal U^1,\dots,\mathcal U^\ell$ for some $\ell\in\mathbb N$, such that
the following holds. For each $j\in\{1,\dots,\ell\}$ there is a reference point
$y^j\in\R^3$ and a local coordinate system $\{e^j_1,e^j_2,e_3^j\}$ (which we assume
to be orthonormal and set $\mathcal Q_j=(e_1^j|e_2^j |e_3^j)\in\mathbb R^{3\times 3}$), a function
$\varphi_j:\mathbb R^{2}\rightarrow\mathbb R$
and $r_j>0$
with the following properties:
\begin{enumerate}[label={\bf (A\arabic{*})}]
\item\label{A1} There is $h_j>0$ such that
$$\mathcal U^j=\{x=\mathcal Q_jz+y^j\in\mathbb R^3:\,z=(z',z_3)\in\R^3,\,|z'|<r_j,\,
|z_3-\varphi_j(z')|<h_j\}.$$
\item\label{A2} For $x\in\mathcal U^j$ we have with $z=\mathcal Q_j^t(x-y^j)$
\begin{itemize}
\item $x\in\partial\Omega$ if and only if $z_3=\varphi_j(z')$;
\item $x\in\Omega$ if and only if $0<z_3-\varphi_j(z')<h_j$;
\item $x\notin\Omega$ if and only if $0>z_3-\varphi_j(z')>-h_j$.
\end{itemize}
\item\label{A3} We have that
$$\partial\Omega\subset \bigcup_{j=1}^\ell\mathcal U^j.$$
\end{enumerate}
In other words, for any $x_0\in\partial\Omega$ there is a neighborhood $U$ of $x_0$ and a function $\phi:\mathbb R^{2}\rightarrow\mathbb R$ such that after translation and rotation\footnote{By translation via $y_j$ and rotation via $\mathcal Q_j$ we can assume that $x_0=0$ and that the outer normal at~$x_0$ is pointing in the negative $x_3$-direction.}
 \begin{align}\label{eq:3009}
 U \cap \Omega = U \cap G,\quad G = \set{(x',x_3)\in \R^3 \,:\, x' \in \R^{2}, x_3 > \phi(x')}
 \end{align}
 The regularity of $\partial\Omega$ will be described by means of local coordinates as just described.
 \begin{definition}\label{def:besovboundary}
 Let $\Omega\subset\R^3$ be a bounded domain, $s\geq 1$ and $p\in[1,\infty]$. We say that $\partial\Omega$ belongs to the class $\mathcal M^{s,p}$ if there is $\ell\in\mathbb N$ and functions $\varphi_1,\dots,\varphi_\ell\in \mathcal M^{s,p}(\R^2)$  satisfying \ref{A1}--\ref{A3}.
 \end{definition}
 Clearly, we  can define similarly a $\mathcal M^{s,p}(\delta)$-boundary for some $\delta>0$ by requiring that $\varphi_1,\dots,\varphi_\ell\in \mathcal M^{s,p}(\R^2)(\delta)$.
Analogous definitions apply for various other function spaces such as $B^s_{\varrho,p}$ for $s>0$ and $\varrho,p\in[1,\infty]$ or $C^{1,\alpha}$ for $\alpha\in(0,1]$. Of particular importance for us is also a Lipschitz boundary, where $\varphi_1,\dots,\varphi_\ell\in W^{1,\infty}(\mathbb R^{2})$. We say that the Lipschitz constant of $\partial\Omega$, denoted by $\mathrm{Lip}(\partial\Omega)$, is (smaller or) equal to some number $L>0$ provided the Lipschitz constants of $\varphi_1,\dots,\varphi_\ell$ are not exceeding $L$. 
\begin{remark}\label{rem:boundary}
It follows from \eqref{eq:MSa} and \eqref{eq:MSb} that $\partial\Omega\in \mathcal M^{s,p}(\delta)$
provided $\Omega$ is a Lipschitz domain with sufficiently small Lipschitz constant and $\partial\Omega\in B^\theta_{\varrho, p}$ for $\theta>s$ and
\begin{align}\label{eq:SMp}
\varrho\geq p\quad\text{if}\quad p(s-1)\geq 3,\quad \varrho\geq \tfrac{2p}{p(s-1)-1}\quad\text{if}\quad p(s-1)< 3,
\end{align}
The Lipschitz constant can be made sufficiently small, for instance, if $\partial\Omega\in C^{1,\alpha}$ for some $\alpha>0$.
\end{remark}

In order to describe the behaviour of functions defined in $\Omega$ close to the boundary we need to extend the functions $\varphi_1,\dots,\varphi_\ell$  from \ref{A1}--\ref{A3} to the half space
$\mathbb H := \set{\xi = (\xi',\xi_3)\,:\, \xi_3 > 0}$. Hence we are confronted with the task of extending a function~$\phi\,:\, \R^{2}\to \R$ to a mapping $\Phi\,:\, \mathbb H \to \R^3$ that maps the 0-neighborhood in~$\mathbb H$ to the $x_0$-neighborhood in~$\Omega$. The mapping $(\xi',0) \mapsto (\xi',\phi(\xi'))$ locally maps the boundary of~$\mathbb H$ to the one of~$\partial \Omega$. We extend this mapping using the extension operator of Maz'ya and Shaposhnikova~\cite[Section 9.4.3]{MaSh}. Let $\zeta \in C^\infty_c(B_1(0'))$ with $\zeta \geq 0$ and $\int_{\R^{2}} \zeta(x')\dx'=1$. Let $\zeta_t(x') := t^{-2} \zeta(x'/t)$ denote the induced family of mollifiers. We define the extension operator 
\begin{align*}
  (\mathcal{T}\phi)(\xi',\xi_3)=\int_{\R^{2}} \zeta_{\xi_3}(\xi'-y')\phi(y')\dy',\quad (\xi',\xi_3) \in \mathbb H,
\end{align*}
where~$\phi:\R^2\to \R$ is a Lipschitz function with Lipschitz constant~$L$.
Then the estimate 
\begin{align}\label{est:ext}
  \norm{\nabla (\mathcal{T} \phi)}_{W^{s,p}(\setR^{3})}\le c\norm{\nabla \phi}_{W^{s-\frac 1 p,p}(\setR^{2})}
\end{align}
follows from~\cite[Theorem 8.7.2]{MaSh}. Moreover, \cite[Theorem 8.7.1]{MaSh} yields
\begin{align}\label{eq:MS}
\|\mathcal T\phi\|_{\mathcal M^{s,p}(\mathbb H)}\lesssim \|\phi\|_{\mathcal M^{s-\frac{1}{p},p}(\R^{2})}.
\end{align}
It is shown in \cite[Lemma 9.4.5]{MaSh} that (for sufficiently large~$N$, i.e., $N \geq c(\zeta) L+1$) the mapping
\begin{align*}
  \alpha_{z'}(z_3) \mapsto N\,z_3+(\mathcal{T} \phi)(z',z_3)
\end{align*}
is for every $z' \in \setR^{2}$ one to one and the inverse is Lipschitz with gradient
bounded by $(N-L)^{-1}$.
Now, we define the mapping~$\bfPhi\,:\, \mathbb H \to \R^3$ as a rescaled version of the latter one by setting
\begin{align}\label{eq:Phi}
  \bfPhi(\xi',\xi_3)
  &:=
    \big(\xi',
    \alpha_{\xi_3}(\xi')\big) = 
    \big(\xi',
    \,\xi_3 + (\mathcal{T} \phi)(\xi',\xi_3/N)\big).
\end{align}
Thus, $\bfPhi$ is one-to-one (for sufficiently large~$N=N(L)$) and we can define its inverse $\bfPsi := \bfPhi^{-1}$.
 The Jacobi matrix of the mapping $\bfPhi$ satisfies
\begin{align}\label{J}
  J = \nabla \bfPhi = 
  \begin{pmatrix}
    \mathbb I_{2\times 2}&0
    \\
    \partial_{\xi'} (\mathcal{T}  \phi)& 1+ 1/N\partial_{\xi_n}\mathcal{T}  \phi
  \end{pmatrix}.
\end{align}
Since 
$\abs{\partial_{\xi_3}\mathcal{T}  \phi} \leq L$, we have \begin{align}\label{eq:detJ}\frac{1}{2} < 1-L/N \leq \abs{\det(J)} \leq 1+L/N\leq 2\end{align}
using that $N$ is large compared to~$L$. Finally, we note the implication
\begin{align} \label{eq:SMPhiPsi}
\bfPhi\in\mathcal M^{s,p}(\mathbb H)\,\,\Rightarrow \,\,\bfPsi\in \mathcal M^{s,p}(\mathbb H),
\end{align}
which holds, for instance, if $\bfPhi$ is Lipschitz continuous, cf. \cite[Lemma 9.4.2]{MaSh}.

\begin{remark}\label{rem:cover}
\begin{enumerate}
\item
Since the cover $\mathcal U^1,\dots,\mathcal U^\ell$ is open it is possible to find a number $\mathfrak R>0$ (depending on the cover) such that the following holds: for every $x\in\partial\Omega$ there is $j\in\{1,\dots,\ell\}$ such that $x\in \mathcal U^j$ and $\mathrm{dist}(x,\mathbb R^3\setminus \mathcal U^j)\geq \mathfrak R$. 
\item Similarly, by possibly decreasing $\mathfrak R$, we have the following: there is $\delta>0$ (depending on the cover) such that for any $x\in\Omega$ with $\mathrm{dist}(x,\partial\Omega)\leq \delta$
there is $j\in\{1,\dots,\ell\}$ such that $x\in \mathcal U^j$ and $\mathrm{dist}(x,\mathbb R^3\setminus \mathcal U^j)\geq \mathfrak R$.
\end{enumerate}
\end{remark}

\subsection{The main results}\label{sec:mainresults}
We start with a definition of boundary suitable weak solutions adapting the notation from \cite{SeShSo}. These solutions satisfy a local form of the energy inequality in the neighborhood of boundary points. 
For that purpose we fix two numbers $r_\ast\in(1,2)$ and $s_\ast\in(1,\frac{3}{2})$ such that $\frac{1}{r_\ast}+\frac{3}{2s_\ast}\geq 2$. The choice comes from the fact that the convective term $(\nabla\bfu)\bfu$ of a weak solution to \eqref{1} belongs to $L^{r_\ast}_tL^{s_\ast}_x$. Later on we will choose $r_\ast=5/3$ and, accordingly, $s_\ast=15/14$.
\begin{definition}[Boundary suitable weak solution] \label{def:weakSolution}
Let $\Omega\subset\R^3$ be a bounded Lipschitz domain and $\Gamma\subset\partial\Omega$ relatively open.
Let $(\bff, \bfu_0)$ be a dataset such that
\begin{equation}
\begin{aligned}
\label{dataset}
&\bff \in L^{r_\ast}\big(\mathcal I; L^{s_\ast}(\Omega)\big),\quad 
\bfu_0\in W^{2,s_\ast}\cap W^{1,2}_{0,\mathrm{\Div}}(\Omega).
\end{aligned}
\end{equation} 
We call the triple
$(\bfu,\pi,\varphi)$
a boundary suitable weak solution to the Navier--Stokes system \eqref{1} hear $\Gamma$ with data $(\bff, \bfu_0)$ provided that the following holds:
\begin{itemize}
\item[(a)] The velocity field $\bfu$ satisfies
\begin{align*}
 \bfu \in L^\infty \big(\mathcal I; L^2(\Omega) \big)\cap  L^2 \big(\mathcal I; W^{1,2}_{0,\Div}(\Omega) \big) \cap L^{r_\ast}(\mathcal I;W^{2,s_\ast}(\Omega))\cap W^{1,r_\ast}(\mathcal I;L^{s_\ast}(\Omega)).
\end{align*}
\item[(b)] The pressure $\pi$ satisfies
$$\pi\in L^{r_\ast}(\mathcal I;W^{1,s_\ast}_\perp(\Omega)).$$
\item[c)] 
 We have\footnote{after translation via $y_j$ and rotation via $\mathcal Q_j$, cf. Section \ref{sec:para}.}
 \begin{align*}
 \Gamma \subset \partial\Omega \cap G,\quad G = \set{(x',x_3)\in \R^n \,:\, x' \in \R^{2}, x_3 = \varphi(x')}
 \end{align*}
 for some Lipschitz function $\varphi:\R^2\rightarrow\R$ satisfying 
\begin{align}\label{eq:varphi}
\varphi(0)=0,\quad \nabla\varphi(0)=0.
\end{align}
\item[(d)] We have
\begin{align*}
\partial_t \bfu+(\bfu\cdot\nabla)\bfu&=\Delta\bfu-\nabla \pi+\bff,\quad \Div\bfu=0,\quad\bfu|_{\partial\Omega}=0,\quad \bfu(0,\cdot)=\bfu_0,
\end{align*}
a.a. in $\mathcal I\times\Omega$.
\item[(e)] for any $\zeta\in C_c^\infty(\mathcal I\times\R^3)$ with $\zeta\geq0$ and $\mathrm{spt}(\zeta) \subset \mathcal I\times(\R^3\setminus \Gamma)$ the local energy inequality 
\begin{equation}\label{energylocal0}
\begin{split}
\int_ {\mt}\frac{1}{2}&\zeta\big| \bfu(t)\big|^2\dx+\int_0^t\int_ {\mt}\zeta|\nabla \bfu |^2\dx\ds\\& \leq\int_0^t\int_{\mt}\frac{1}{2}\Big(| \bfu|^2(\partial_t\zeta+\Delta\zeta)+\big(|\bfu|^2+2\pi\big)\bfu\cdot\nabla\zeta\Big)\dxs+\int_0^t\int_{\mt}\zeta\bff\cdot\bfu\dxs.
\end{split}
\end{equation}
holds.
\end{itemize}
\end{definition}
The next theorem shows that, under suitable assumptions on $\partial\Omega$, there is a solution to \eqref{1} which is a  boundary suitable weak solution around every boundary point.
\begin{theorem}\label{thm:existence}
Suppose that $\Omega\subset\R^3$ is a bounded Lipschitz domain such that $\mathrm{Lip}(\partial\Omega)\leq\delta$ and $\partial\Omega\in \mathcal M^{2-1/s_\ast,s_\ast}(\delta)$ for some sufficiently small $\delta$. Then there is a solution $(\bfu,\pi)$ to the Navier--Stokes equations \eqref{1} with the following property: For every point $x_0\in\partial\Omega$ there is a neighborhood $\mathcal U(x_0)$ such that $(\bfu,\pi)$ generates a boundary suitable weak solution to the Navier--Stokes system \eqref{1} 
near $\mathcal U(x_0)\cap\partial\Omega$ in the sense of Definition \ref{def:weakSolution}.
\end{theorem}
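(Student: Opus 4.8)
The plan is to construct the solution by a Galerkin-type or regularised approximation scheme, extract a weak solution satisfying the energy inequality globally, upgrade its spatial regularity near the boundary via the new maximal regularity theory for the unsteady Stokes system (Theorem \ref{thm:stokesunsteady}, announced in the introduction), and finally derive the \emph{local} energy inequality \eqref{energylocal0} from the equation together with the regularity obtained. Concretely, first I would set up an approximation — e.g.\ a Leray-type mollification of the convective term, or a spectral Galerkin scheme using the Stokes operator — producing approximate solutions $\bfu^k$ that are smooth enough to multiply by $\zeta\bfu^k$ and integrate by parts, so that the local energy \emph{equality} holds exactly at the approximate level. Standard a priori bounds give $\bfu^k$ bounded in $L^\infty_tL^2_x\cap L^2_tW^{1,2}_{0,\Div}$ uniformly; Aubin--Lions then yields strong $L^2_{t,x}$ convergence, which is what is needed to pass to the limit in the quadratic and cubic terms of \eqref{energylocal0} and to obtain the inequality (rather than equality, the loss coming from weak lower semicontinuity of the dissipation term).

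The heart of the matter is item (a): showing $\bfu\in L^{r_\ast}(\mathcal I;W^{2,s_\ast}(\Omega))\cap W^{1,r_\ast}(\mathcal I;L^{s_\ast}(\Omega))$ with $r_\ast=5/3$, $s_\ast=15/14$. Here I would argue by a bootstrap: view \eqref{1} as an inhomogeneous Stokes system $\partial_t\bfu-\Delta\bfu+\nabla\pi=\bff-(\bfu\cdot\nabla)\bfu$, $\Div\bfu=0$, $\bfu|_{\partial\Omega}=0$, with the right-hand side $\bff-(\bfu\cdot\nabla)\bfu$. From the energy bounds, Sobolev embedding in 3D gives $\bfu\in L^2_tL^6_x$ and $\nabla\bfu\in L^2_tL^2_x$, hence by interpolation $\bfu\in L^{10/3}_{t,x}$ and the convective term $(\bfu\cdot\nabla)\bfu=\Div(\bfu\otimes\bfu)$ lies in $L^{r_\ast}_tL^{s_\ast}_x$ precisely for the admissible pair fixed before Definition \ref{def:weakSolution} (this is exactly why those exponents were chosen, and why $\frac{1}{r_\ast}+\frac{3}{2s_\ast}\geq 2$). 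Together with the data assumption $\bff\in L^{r_\ast}_tL^{s_\ast}_x$ and $\bfu_0\in W^{2,s_\ast}\cap W^{1,2}_{0,\Div}$, the maximal $L^{r_\ast}_tL^{s_\ast}_x$-regularity result for the unsteady Stokes system under the hypothesis $\mathrm{Lip}(\partial\Omega)\le\delta$ and $\partial\Omega\in\mathcal M^{2-1/s_\ast,s_\ast}(\delta)$ then delivers $\bfu\in L^{r_\ast}_tW^{2,s_\ast}_x\cap W^{1,r_\ast}_tL^{s_\ast}_x$ and $\pi\in L^{r_\ast}_tW^{1,s_\ast}_{x,\perp}$, which is precisely (a) and (b). (One must check that the maximal-regularity solution coincides with the weak solution — standard uniqueness of the linear Stokes problem in the energy class, or a duality/density argument.)

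With this regularity in hand, items (c) and (d) are essentially definitional: (c) is just the parametrisation from Section \ref{sec:para} with the normalisation \eqref{eq:varphi}, available at every boundary point since $\partial\Omega\in\mathcal M^{2-1/s_\ast,s_\ast}(\delta)\subset W^{1,\infty}$, and the neighborhood $\mathcal U(x_0)$ is any of the $\mathcal U^j$ from \ref{A1}–\ref{A3} containing $x_0$ (up to rotating and translating so $x_0=0$); (d) is the pointwise a.e.\ form of the equation, which holds because $\bfu\in L^{r_\ast}_tW^{2,s_\ast}_x\cap W^{1,r_\ast}_tL^{s_\ast}_x$ and $\pi\in L^{r_\ast}_tW^{1,s_\ast}_x$ make every term a genuine $L^1_{loc}$ function, and the weak formulation forces equality a.e. For (e), I would test the strong form (d) with $\zeta\bfu$ for $\zeta\in C^\infty_c(\mathcal I\times(\R^3\setminus\Gamma))$, $\zeta\ge 0$; all integrations by parts are now justified by the regularity and by $\bfu|_{\partial\Omega}=0$ (the support condition on $\zeta$ ensures no boundary contribution from $\Gamma$, while on $\partial\Omega\setminus\Gamma$ the Dirichlet condition kills the relevant traces), yielding the local energy \emph{equality}; passing back through the approximation, or invoking lower semicontinuity directly if one prefers to keep the inequality from the limit procedure, gives \eqref{energylocal0}.

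The main obstacle I anticipate is the interface between the maximal regularity theorem and the nonlinear problem: one must verify that the fixed weak solution actually has a right-hand side in $L^{r_\ast}_tL^{s_\ast}_x$ (the convective-term estimate is clean, but one should be careful that $\Div(\bfu\otimes\bfu)$ is handled as a distributional divergence and that the Stokes theory accommodates such data, or alternatively that $\bfu\otimes\bfu\in L^{r_\ast}_tL^{3s_\ast/(3-s_\ast)}_x$ feeds into a version of the theory with right-hand side in divergence form), and that the compatibility of $\bfu_0$ with the boundary and divergence constraints is exactly what the linear theory requires for the stated endpoint integrability. A secondary subtlety is that the approximation scheme must be chosen compatibly with the irregular geometry — a Galerkin basis adapted to the Stokes operator on a mere Lipschitz domain is available, but one should make sure the approximate solutions inherit enough regularity to justify the local energy equality at each step; using instead a retarded-mollifier (Leray) regularisation of the convective term sidesteps basis-regularity issues and is probably the cleanest route.
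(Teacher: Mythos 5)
Your approach is genuinely different from the paper's: you propose to fix the irregular domain and regularise the \emph{equation} (Leray/Galerkin), then upgrade the limit weak solution to $L^{r_\ast}_tW^{2,s_\ast}_x\cap W^{1,r_\ast}_tL^{s_\ast}_x$ by viewing \eqref{1} as a Stokes system with forcing $\bff-(\bfu\cdot\nabla)\bfu$ and invoking Theorem~\ref{thm:stokesunsteady}. The paper instead regularises the \emph{domain}: it mollifies the boundary charts $\varphi_1,\dots,\varphi_\ell$ to obtain smooth domains $\Omega_m\supset\Omega$, uses the classical smooth-domain theory (CKN) to produce suitable weak solutions $(\bfu_m,\pi_m)$ there, applies Theorem~\ref{thm:stokesunsteady} to each $\bfu_m$ to get the maximal-regularity bound \eqref{eq:2505} \emph{uniformly in $m$} (the constants depend only on the multiplier norm of $\partial\Omega_m$, which is not increased by mollification), and then passes to the limit in both the momentum equation and the local energy inequality.

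The genuine gap in your route is the identification step that you dismiss as ``standard uniqueness of the linear Stokes problem in the energy class, or a duality/density argument.'' Neither option is clean here. The maximal-regularity solution of the Stokes problem with forcing $\bff-(\bfu\cdot\nabla)\bfu\in L^{5/3}_tL^{15/14}_x$ lies in $L^{5/3}_tW^{2,15/14}_x\cap W^{1,5/3}_tL^{15/14}_x$; by Sobolev embedding $W^{2,15/14}_x\hookrightarrow W^{1,5/3}_x$, so this is \emph{not} contained in the energy class $L^2_tW^{1,2}_x$, and energy-class uniqueness does not apply to the difference. A duality argument would solve the adjoint (backward) Stokes system with maximal $L^{r_\ast'}_tL^{s_\ast'}_x$-regularity, i.e.\ $p=s_\ast'=15$; but Theorem~\ref{thm:stokesunsteady} at $p=15$ requires $\partial\Omega\in\mathcal M^{2-1/15,15}(\delta)$, which is a strictly stronger hypothesis than the $\mathcal M^{2-1/s_\ast,s_\ast}$ with $s_\ast=15/14$ assumed in the theorem. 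So the duality argument is not available under the stated assumptions. A related difficulty already appears at the approximate level: in the irregular domain, the Leray-mollified solutions $\bfu^\varepsilon$ are not smooth, so justifying that they satisfy the local energy \emph{equality} requires some parabolic regularity for $\bfu^\varepsilon$, which again rests on the same identification question. The paper avoids all of this because the approximants live in smooth domains where the classical Stokes regularity theory identifies weak and strong solutions without further ado, and the only thing imported from the irregular-domain theory is the \emph{uniformity} of the constant in \eqref{eq:2505}. If you want to salvage your route, you would either need to prove the identification by a direct argument that works at the endpoint exponents in Lipschitz domains, or revert to the paper's domain-regularisation strategy.
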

\begin{proof}
Applying a standard regularisation procedure (by convolution with a mollifying kernel) to the functions $\varphi_1,\dots,\varphi_\ell$ from \ref{A1}--\ref{A3} in the parametrisation of $\partial\Omega$ we obtain a smooth boundary. Classically, the solution to the corresponding Stokes system is smooth.
 Such a procedure is standard and has been applied, for instance, in \cite{cm-1,CiMa}. It is possible to do this in a way that the original domain is included in the regularised domain to which we extend the functions $\bff$ and $\bfu_0$ by means of an extension operator.
 The regularisation applied to the $\varphi_j's$ converges on all Besov spaces with $p<\infty$. As shown in \cite[Lemma 4.3.3.]{MaSh}
it does not expand the $\mathcal M^{s,p}(\R^{2})$-norm, which is sufficient. For a smooth domain the statement of Theorem \ref{thm:existence} is well-known (it can, for instance, be proved along the lines of \cite{CKN}). We obtain a sequence of functions $(\bfu_m,\pi_m)$ which satisfy the Navier--Stokes equations as well as the local energy inequality \eqref{energylocal0}. Clearly, they can be constructed to also satisfy the global energy inequality. Hence our sequence $(\bfu_m)$ is bounded in the energy space
 \begin{align*}
 L^\infty(\mathcal I;L^2(\Omega))\cap L^2(\mathcal I;W^{1,2}_{0,\Div}(\Omega)).
 \end{align*}
 As a consequence, we can bound the convective term $(\nabla\bfu_m)\bfu_m$ in $L^{r_\ast}(I;L^{s_\ast}(\Omega))$. Now we come to the crucial point: By the maximal regularity theory for the Stokes system from Theorem \ref{thm:stokesunsteady} below we have
 \begin{align}\label{eq:2505}
 \begin{aligned}
\|\partial_t\bfu_m\|_{L^{r_\ast}(\mathcal I;L^{s_\ast}(\Omega))}&+\|\bfu_m\|_{L^{r_\ast}(\mathcal I;W^{2,s_\ast}(\Omega))}
+\|\pi_m\|_{L^{r_\ast}(\mathcal I;W^{1,s_\ast}(\Omega))}\\&\lesssim \|\bff\|_{L^{r_\ast}(\mathcal I;L^{s_\ast}(\Omega)}+\|(\nabla\bfu_m)\bfu_m\|_{L^{r_\ast}(\mathcal I;L^{s_\ast}(\Omega)}+\|\bfu_0\|_{W^{2,s_\ast}(\Omega)}
\end{aligned}
\end{align}
uniformly in $m$. 
With \eqref{eq:2505} at hand, we obtain (after passing to a subsequence) limit objects with the claimed regularity and can pass to the limit in the momentum equation and local energy inequality.
\end{proof}

\begin{theorem}\label{thm:main}
Let $\Omega\subset\R^3$ be a bounded Lipschitz domain and $\Gamma\subset\partial\Omega$ relatively open. Suppose that $\bff\in L^p(\mathcal I;L^p(\Omega))$ for some $p>\frac{15}{4}$.
There is a number $\varepsilon_{0}>0$ such that the following holds. Let $(\bfu,\pi,\varphi)$ be a boundary suitable weak solution to the Navier--Stokes system \eqref{1} near $\Gamma$ in the sense of Definition \ref{def:weakSolution}, where $\varphi\in\mathcal M^{2-1/p,p}(\R^2)(\delta)$ with $\|\varphi\|_{W^{1,\infty}_y}\leq\delta$ and sufficiently small $\delta>0$.
 Let $x_0\in\Gamma$ and $t_0\in I$ such that 
\begin{align}
r^{-2}\int_{t_0-r^2}^{t_0+r^2}\int_{\Omega\cap \mathcal B_r(x_0)}|\bfu|^3\dxt+\bigg(r^{-5/3}\int_{t_0-r^2}^{t_0+r^2}\int_{\Omega\cap \mathcal B_r(x_0)}|\pi|^{5/3}\dx\dt\bigg)^{\frac{9}{5}}<\varepsilon_0
\end{align}
for some $r\leq \mathrm{dis}(x_0,\R^3\setminus\Gamma)$.
Then we have $\bfu\in C^{0,\alpha}(\overline{\mathcal U}(t_0,x_0))$ for some $\alpha>0$ and a neighborhood $\mathcal U(t_0,x_0)$ of $(t_0,x_0)$.
\end{theorem}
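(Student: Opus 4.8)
The plan is an indirect blow-up argument in the spirit of \cite{CKN,SeShSo}, performed after flattening the boundary, with the decay of the limiting object supplied by the linear theory for the perturbed Stokes system (Lemma~\ref{lem:31}, Lemma~\ref{lem:32}).

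\emph{Step 1 (Flattening and reformulation).} Near $x_0\in\Gamma$ I use the change of variables $\bfPhi$ from Section~\ref{sec:para}, available since $\varphi\in\mathcal M^{2-1/p,p}(\R^2)(\delta)$ with $\|\varphi\|_{W^{1,\infty}_y}\le\delta$; it maps a $0$-neighbourhood in $\mathbb H$ onto an $x_0$-neighbourhood in $\Omega$ and $\partial\mathbb H$ onto $\Gamma$. Transporting $(\bfu,\pi)$ through $\bfPhi$ (with a Piola-type transform for the velocity, so as to preserve incompressibility) yields $(\overline\bfu,\overline\pi)$ on parabolic half-cylinders $\mathcal Q_r^+$ centred at the flat boundary, solving a \emph{perturbed} Navier--Stokes system whose coefficients are built from $J=\nabla\bfPhi$ and $\det J$; by \eqref{eq:detJ} and \eqref{eq:MS} these are uniformly close to the identity in $L^\infty$ and bounded in $\mathcal M^{2-1/p,p}(\mathbb H)$. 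One also transports the local energy inequality \eqref{energylocal0}; here the delicate term is the transform of $\int\overline\pi\,\overline\bfu\cdot\nabla\zeta$, for which, as noted in the introduction, it suffices that $\bfPhi$ be a multiplier on $W^{2,3/2}$ --- implied by \eqref{ass:main}. The smallness hypothesis of the theorem becomes smallness at the unit scale of the dimensionless excess
\begin{align*}
\mathcal E(z_0,r):=\dashint_{\mathcal Q_r^+(z_0)}\big|\overline\bfu-(\overline\bfu)_{\mathcal Q_r^+}\big|^{3}\dz+r^{3}\bigg(\dashint_{\mathcal Q_r^+(z_0)}\big|\overline\pi-(\overline\pi)_{\mathcal Q_r^+}\big|^{5/3}\dz\bigg)^{9/5},
\end{align*}
to which one adds a forcing term carrying a positive power of $r$ (admissible precisely because $p>\tfrac{15}{4}$).

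\emph{Step 2 (Excess--decay / blow-up lemma).} The core is Lemma~\ref{Lemma}: there exist $\tau\in(0,\tfrac12)$, $\alpha>0$ and $\varepsilon_0>0$ such that $\mathcal E(z_0,r)<\varepsilon_0$ forces $\mathcal E(z_0,\tau r)\le\tfrac12\tau^{2\alpha}\,\mathcal E(z_0,r)+C(\tau r)^{2\alpha}\|\bff\|_{L^p_tL^p_x}^{q}$ for some $q>0$. I would argue by contradiction: if this fails there are boundary suitable weak solutions $(\overline\bfu_k,\overline\pi_k)$ with $\lambda_k:=\mathcal E(z_k,r_k)\to0$ but the reversed inequality at scale $\tau r_k$. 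Rescaling parabolically and normalising $\overline\bfu_k$ by a suitable power of $\lambda_k$ (and $\overline\pi_k$ accordingly) produces $(\bfv_k,\mathfrak q_k)$ with unit excess on $\mathcal Q_1^+$; the convective term and the forcing then carry a positive power of $\lambda_k$, respectively of $r_k$, and vanish in the limit, while the rescaled coefficients converge to $\mathbb I_{3\times3}$ since $\nabla\varphi(0)=0$. Uniform bounds for $(\bfv_k,\mathfrak q_k)$ in $L^{5/3}_tW^{2,p}_x\cap W^{1,5/3}_tL^p_x$ resp.\ $L^{5/3}_tW^{1,p}_x$ follow from Lemma~\ref{lem:31} and the Caccioppoli inequality of Lemma~\ref{lem:32} (the transported local energy inequality feeding in the energy-level control needed to estimate the convective term). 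Since $p>\tfrac{15}{4}$ this yields, via an Aubin--Lions argument, strong convergence $\bfv_k\to\bfv$ (in $C^0$, say, on a smaller half-cylinder) and weak convergence $\mathfrak q_k\rightharpoonup\mathfrak q$ --- enough to pass to the limit in the momentum equation and in the transported local energy inequality. The limit $(\bfv,\mathfrak q)$ solves a homogeneous linear (perturbed) Stokes system with vanishing values on the flat part; Lemma~\ref{lem:31} gives $\nabla\mathfrak q\in L^{5/3}_tL^p_x$ with $p>\tfrac{15}{4}$, so Poincar\'e's inequality produces exactly the pressure decay \eqref{eq:1401}, and Lemma~\ref{lem:32} gives continuity and the matching decay of $\bfv$. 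Hence the limit has excess $\le C_0\tau^{2\alpha}$ at scale $\tau$ with $C_0$ independent of $\tau$; choosing $\tau$ small contradicts the unit normalisation.

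\emph{Step 3 (Iteration and H\"older continuity).} The smallness condition is open, so $\mathcal E(z,r_1)<\varepsilon_0$ holds for a fixed $r_1$ and all $z=(s,y)$ with $y$ in a neighbourhood of $0$ on $\partial\mathbb H$. Iterating Lemma~\ref{Lemma} gives a geometric decay $\mathcal E(z,\tau^k r_1)\lesssim(\tau^k r_1)^{2\alpha'}$, and interpolating over scales $\mathcal E(z,\rho)\lesssim\rho^{2\alpha'}$ for $\rho\in(0,r_1]$, uniformly in such $z$. Combining this boundary decay with the interior partial-regularity decay --- which propagates inward once the boundary excess is small, by the interior analogue of Lemma~\ref{Lemma} --- and invoking the parabolic Campanato/Morrey isomorphism on half-cylinders yields $\overline\bfu\in C^{0,\alpha'}$ on a half-neighbourhood of $z_0$. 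Transporting back through $\bfPsi=\bfPhi^{-1}$, which is again a multiplier (by \eqref{eq:SMPhiPsi} and \eqref{lem:9.4.1}), gives $\bfu\in C^{0,\alpha}(\overline{\mathcal U}(t_0,x_0))$ for some $\alpha>0$.

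\emph{Main obstacle.} The crux is the pressure. Establishing \eqref{eq:1401} for the perturbed Stokes pressure forces the time integrability to be as large as $5/3$ (inherited from the convective term in $L^{5/3}_tL^{15/14}_x$) \emph{and} the space integrability $p>\tfrac{15}{4}$, hence the Sobolev-multiplier hypothesis \eqref{ass:main} on $\varphi$; tightly linked to this is obtaining the \emph{uniform} (in $k$) perturbed Stokes estimates that yield compactness of $(\bfv_k,\mathfrak q_k)$, and the faithful transport of the local energy inequality through $\bfPhi$ without demanding $\bfPhi\in W^{2,\infty}$ --- only that it be a multiplier on $W^{2,3/2}$. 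By comparison, the iteration and the Campanato step are routine.
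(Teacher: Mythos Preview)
Your overall strategy --- flatten via Section~\ref{sec:para}, prove an excess-decay lemma by blow-up/contradiction, iterate, conclude via Campanato --- matches the paper's. A few points of departure are worth noting. The paper transforms by plain composition $\overline\bfu=\bfu\circ\bfPhi$, not a Piola transform; incompressibility becomes $\bfB_\varphi^\top:\nabla\overline\bfu=0$ and the excess uses $|\overline\bfu|^3$ (no mean), cf.~\eqref{momref}--\eqref{divref}. More importantly, the paper does \emph{not} arrange that the rescaled coefficients tend to $\mathbb I_{3\times 3}$: Lemma~\ref{Lemma} is stated at unit scale uniformly over all admissible $\varphi$, the contradiction sequence carries varying $\varphi_m$, and the limit \eqref{eq:eqlimit} is a genuinely perturbed Stokes system to which Lemmas~\ref{lem:31}--\ref{lem:32} are applied. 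Your route would need $r_k\to 0$ (not forced by the contradiction hypothesis) together with continuity of $\nabla\varphi$. Also, the ``delicate term'' in the transported energy inequality is not the pressure term but the one containing $\Delta\bfPsi\circ\bfPhi$ (arising from $\Delta\zeta$); that is where the $\mathcal M^{2,3/2}$-multiplier condition is actually used, see \eqref{eq:Im2}.

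The substantive gap is your claim of \emph{uniform} $L^{5/3}_tW^{1,p}_x$ bounds for the sequence pressure $\mathfrak q_k$ via Lemmas~\ref{lem:31}--\ref{lem:32}. Those lemmas are linear; treating the rescaled convective term $\lambda_k(\bfB_{\varphi_k}\nabla\bfv_k)\bfv_k$ as forcing, the energy bounds place it only in $L^{5/3}_tL^{15/14}_x$, not $L^{5/3}_tL^p_x$ with $p>\tfrac{15}{4}$, so Lemma~\ref{lem:32} cannot deliver $\nabla\mathfrak q_k\in L^{5/3}_tL^p_x$ uniformly. And since the pressure convergence is only weak in $L^{5/3}$, the decay \eqref{eq:1401} for the \emph{limit} $\mathfrak q$ does not by itself control the excess of $\mathfrak q_k$ at scale $\tau$ (weak lower semicontinuity goes the wrong way). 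The paper closes this by a splitting: one solves the auxiliary perturbed Stokes problem \eqref{eq:eq'} with right-hand side only the convective term, getting $(\tilde\bfv_m,\tilde{\mathfrak q}_m)\to 0$ in $L^{5/3}_tW^{1,15/14}_x\hookrightarrow L^{5/3}_tL^{5/3}_x$; the remainder $(\tilde\bfv_m^\ast,\tilde{\mathfrak q}_m^\ast)=(\tilde\bfv_m-\overline\bfv_m,\tilde{\mathfrak q}_m-\overline{\mathfrak q}_m)$ then solves a perturbed Stokes system with right-hand side $J_{\varphi_m}\overline\bfg_m\in L^{5/3}_tL^p_x$, to which Lemma~\ref{lem:32} yields $\nabla\tilde{\mathfrak q}_m^\ast\in L^{5/3}_tL^p_x$ uniformly and hence the required decay at scale $\tau$. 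This splitting is the missing ingredient in your Step~2.
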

%
%
Denoting by $\mathcal H^s_{\mathrm{para}}$ the $s$-dimensional parabolic Hausdorff measure (and using its definition based on covering with parabolic cubes) it is standard to deduce the following result concerning the size of the singular set from Theorem \ref{thm:main}.
\begin{theorem}\label{thm:main'}
Suppose that $\Omega\subset\R^3$ is a bounded Lipschitz domain such that $\mathrm{Lip}(\partial\Omega)\leq\delta$ and $\partial\Omega\in \mathcal M^{2-1/p,p}(\delta)$ for some $p>\frac{15}{4}$ and sufficiently small $\delta$. Suppose that $\bff\in L^p(\mathcal I;L^p(\Omega))$.
Then there is a solution $(\bfu,\pi)$ to the Navier--Stokes equations \eqref{1} and a closed set $\Sigma\subset I\times \partial\Omega$ with $\mathcal H^{5/3}_{\mathrm{para}}(\Sigma)=0$ such that for any $(t_0,x_0)\in I\times \partial\Omega \setminus\Sigma$ we have $\bfu\in C^{0,\alpha}(\overline{\mathcal U}(t_0,x_0))$ for some $\alpha>0$ and a neighborhood $\mathcal U(t_0,x_0)$ of $(t_0,x_0)$.
\end{theorem}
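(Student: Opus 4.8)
The plan is to deduce Theorem \ref{thm:main'} from Theorem \ref{thm:main} by a covering argument, in exactly the way such statements are derived in the interior theory (cf. \cite{CKN}) and in the flat-boundary case (cf. \cite{Se1,SeShSo}). First I would invoke Theorem \ref{thm:existence}: under the hypotheses $\mathrm{Lip}(\partial\Omega)\leq\delta$ and $\partial\Omega\in\mathcal M^{2-1/s_\ast,s_\ast}(\delta)$ (which follow from $\partial\Omega\in\mathcal M^{2-1/p,p}(\delta)$ with $p>15/4>s_\ast$ after shrinking $\delta$, using the embedding of Sobolev-multiplier classes and Remark \ref{rem:boundary}), there is a solution $(\bfu,\pi)$ which near every boundary point $x_0$ generates a boundary suitable weak solution in the sense of Definition \ref{def:weakSolution}, with a local boundary chart $\varphi$ that after translation/rotation has $\varphi(0)=0$, $\nabla\varphi(0)=0$ and satisfies $\varphi\in\mathcal M^{2-1/p,p}(\R^2)(\delta)$, $\|\varphi\|_{W^{1,\infty}_y}\leq\delta$. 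Thus the hypotheses of Theorem \ref{thm:main} are met at every boundary point with a uniform $\varepsilon_0$.

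Next I would define the putative singular set
\begin{align*}
\Sigma:=\Big\{(t_0,x_0)\in I\times\partial\Omega:\ \liminf_{r\to 0}\ \mathcal E_r(t_0,x_0)\geq\varepsilon_0\Big\},
\end{align*}
where $\mathcal E_r(t_0,x_0)$ denotes the excess quantity appearing in Theorem \ref{thm:main}, namely
\begin{align*}
\mathcal E_r(t_0,x_0):=r^{-2}\int_{t_0-r^2}^{t_0+r^2}\int_{\Omega\cap\mathcal B_r(x_0)}|\bfu|^3\dxt+\bigg(r^{-5/3}\int_{t_0-r^2}^{t_0+r^2}\int_{\Omega\cap\mathcal B_r(x_0)}|\pi|^{5/3}\dx\dt\bigg)^{9/5}.
\end{align*}
By Theorem \ref{thm:main}, any $(t_0,x_0)\in I\times\partial\Omega$ with $\mathcal E_r(t_0,x_0)<\varepsilon_0$ for some admissible $r$ lies in the interior of the set of Hölder-continuity points; hence $I\times\partial\Omega\setminus\Sigma$ is relatively open in $I\times\partial\Omega$ and $\bfu$ is locally Hölder continuous there, so it remains only to show $\mathcal H^{5/3}_{\mathrm{para}}(\Sigma)=0$. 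For the Hausdorff-measure bound I would use a Vitali-type covering lemma adapted to parabolic cylinders $\mathcal Q_r(t_0,x_0)=(t_0-r^2,t_0+r^2)\times\mathcal B_r(x_0)$: given $\rho>0$, cover $\Sigma$ by cylinders $\mathcal Q_{r_i}(t_i,x_i)$ with $r_i<\rho$ centered on $\Sigma$ such that the $\mathcal Q_{r_i/5}$ are pairwise disjoint, and on each such cylinder $\mathcal E_{r_i/5}\geq\varepsilon_0$ gives
\begin{align*}
\varepsilon_0\sum_i r_i^{5/3}\ \lesssim\ \sum_i\bigg(\int_{\mathcal Q_{r_i/5}(t_i,x_i)}|\bfu|^3\dxt+\Big(\int_{\mathcal Q_{r_i/5}(t_i,x_i)}|\pi|^{5/3}\dx\dt\Big)^{9/5}\bigg),
\end{align*}
and by disjointness the right-hand side is controlled by the mass of $|\bfu|^3+(\text{local averages of }|\pi|^{5/3})^{9/5}$ over a $\rho$-neighborhood of $\Sigma$; since $\bfu\in L^3_{t,x}$ (by interpolation of $L^\infty_tL^2_x\cap L^2_tW^{1,2}_x$, or from the stronger regularity in Definition \ref{def:weakSolution}(a)) and $\pi\in L^{5/3}_{t,x}$ (from Definition \ref{def:weakSolution}(b) and Sobolev embedding, recalling $r_\ast=5/3$), this mass tends to $0$ as $\rho\downarrow 0$ by absolute continuity of the integral together with $\mathcal L^4(\Sigma)=0$ (which itself follows since $\Sigma$ has empty interior in a null-measure set). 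Taking $\rho\to0$ yields $\mathcal H^{5/3}_{\mathrm{para}}(\Sigma)=0$.

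The only genuine subtlety — and the step I would flag as requiring care rather than being purely routine — is the precise form of the absolute-continuity/covering argument for the pressure term: since the pressure enters Theorem \ref{thm:main} through the nonlinear quantity $\big(r^{-5/3}\int_{\mathcal Q_r}|\pi|^{5/3}\big)^{9/5}$ rather than linearly, one must be slightly careful to ensure the outer power $9/5$ interacts correctly with the summation over the covering family. This is handled by noting that on a cylinder where the excess exceeds $\varepsilon_0$, either the velocity term alone exceeds $\varepsilon_0/2$ or the pressure term alone exceeds $\varepsilon_0/2$; in the latter case $r_i^{5/3}\lesssim\varepsilon_0^{-5/9}\big(\int_{\mathcal Q_{r_i/5}}|\pi|^{5/3}\big)$, which is now linear in the pressure mass and sums by disjointness exactly as before. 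With this observation the argument is completely standard and I would present it compactly, citing \cite{CKN,SeShSo} for the template; the substantive content of the paper is entirely contained in Theorems \ref{thm:stokesunsteady}, \ref{thm:existence} and \ref{thm:main}, and this final theorem is their formal corollary.
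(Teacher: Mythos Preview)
Your overall strategy --- invoke Theorem \ref{thm:existence}, define the singular set via the excess of Theorem \ref{thm:main}, and run a Vitali covering argument --- is exactly the standard route the paper has in mind (the paper gives no proof, only the remark that the deduction is standard). However, there is a genuine gap in your treatment of the \emph{velocity} contribution to the excess.

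From $r^{-2}\int_{\mathcal Q_r}|\bfu|^3\geq\varepsilon_0/2$ you only get $r^2\lesssim\int_{\mathcal Q_r}|\bfu|^3$, not $r^{5/3}\lesssim\int_{\mathcal Q_r}|\bfu|^3$; for $r<1$ one has $r^{5/3}\geq r^2$, so the displayed inequality $\varepsilon_0\sum_i r_i^{5/3}\lesssim\sum_i\int_{\mathcal Q_{r_i/5}}|\bfu|^3$ does not follow and in fact would only yield $\mathcal H^2_{\mathrm{para}}(\Sigma)=0$, which is strictly weaker than the claimed $\mathcal H^{5/3}_{\mathrm{para}}(\Sigma)=0$. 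Your ``subtlety'' paragraph correctly linearises the pressure term but never returns to repair the velocity term.

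The fix is to use the sharper integrability $\bfu\in L^{10/3}(\mathcal I\times\Omega)$, which follows by interpolation of $L^\infty_tL^2_x\cap L^2_tL^6_x$. By H\"older,
\[
\int_{\mathcal Q_r}|\bfu|^3\leq\Big(\int_{\mathcal Q_r}|\bfu|^{10/3}\Big)^{9/10}|\mathcal Q_r|^{1/10}\lesssim r^{1/2}\Big(\int_{\mathcal Q_r}|\bfu|^{10/3}\Big)^{9/10},
\]
so $r^{-2}\int_{\mathcal Q_r}|\bfu|^3\geq\varepsilon_0/2$ forces $r^{5/3}\lesssim\int_{\mathcal Q_r}|\bfu|^{10/3}$. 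Now both alternatives give a bound of the form $r_i^{5/3}\lesssim\int_{\mathcal Q_{r_i/5}}g$ with $g\in L^1(\mathcal I\times\Omega)$ (namely $g=|\bfu|^{10/3}$ or $g=|\pi|^{5/3}$), and your disjointness/absolute-continuity argument goes through to give $\mathcal H^{5/3}_{\mathrm{para}}(\Sigma)=0$.
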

Our result in Theorem \ref{thm:main'} is in terms of the size of the singular set weaker than the result in \cite{SeShSo} for more regular domains. It is shown there that the dimension of the singular set is one rather than $5/3$. We comment on this gap in more detail in Section \ref{sec:setsize}. It is unlcear if this is an intrinsic feature of irregular domains or a drawback of our method.

\section{The Stokes system in irregular domains}
\label{sec:stokesunsteady}
In this section we consider the unsteady Stokes system
\begin{align}\label{eq:Stokesunstay}
\partial_t\bfu=\Delta \bfu-\nabla\pi+\bff,\quad\Div\bfu=0,\quad\bfu|_{I\times\partial{\Omega}}=\bfu_{\partial},\quad \bfu(0,\cdot)=\bfu_0,
\end{align}
in a domain ${\Omega}\subset\R^3$ with unit normal $\bfn$. The result given in the following theorem is a maximal regularity estimate for the solution in terms of the right-hand side under minimal assumption on the regularity of $\partial{\Omega}$ (see Remark \ref{eq:SMp} for the connecton between Sobolev multipliers and Besov spaces).
\begin{theorem}\label{thm:stokesunsteady}
Let $p,r\in(1,\infty)$ and suppose that ${\Omega}$ is a Lipschitz domain with local Lipschitz constant $\delta$ belonging to the class $\mathcal M^{2-1/p,p}(\R^2)(\delta)$ for some sufficiently small $\delta$, $\bff\in L^r(\mathcal I;L^{p}({\Omega}))$ and $\bfu_{0}\in W^{2,p}({\Omega})\cap L^{2}_{\Div}({\Omega})$ with $\mathrm{tr}\,\bfu_0=\bfu_\partial$, where $\bfu_{\partial}\in W^{2-1/p,p}(\partial{\Omega})$ with $\int_{\partial{\Omega}}\bfu_\partial\cdot\bfn\,\dd\mathcal H^{2}=0$. Then there is a unique solution to \eqref{eq:Stokesunstay} and we have
\begin{align}\label{eq:mainpara}
\begin{aligned}
\|\partial_t\bfu\|_{L^r(\mathcal I;L^{p}({\Omega}))}&+\|\bfu\|_{L^r(\mathcal I;W^{2,p}({\Omega}))}
+\|\pi\|_{L^r(\mathcal I;W^{1,p}({\Omega}))}\\&\lesssim\|\bff\|_{L^r(\mathcal I;L^{p}({\Omega}))}+\|\bfu_{0}\|_{W^{2,p}({\Omega})}+\|\bfu_{\partial}\|_{W^{2-1/p,p}(\partial{\Omega})}.
\end{aligned}
\end{align}
\end{theorem}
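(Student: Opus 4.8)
\textbf{Proof proposal for Theorem \ref{thm:stokesunsteady}.}

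The plan is to reduce the problem by localization to the half-space $\mathbb H$ and to transfer the maximal regularity estimate there, using the flattening map $\bfPhi$ from Section \ref{sec:para}. First I would reduce to the case of zero boundary data by subtracting a suitable extension: since $\bfu_\partial\in W^{2-1/p,p}(\partial\Omega)$ with vanishing flux, one constructs $\bfE\in L^r(\mathcal I;W^{2,p}(\Omega))\cap W^{1,r}(\mathcal I;L^p(\Omega))$ with $\tr\bfE=\bfu_\partial$ and $\Div\bfE=0$ (using a Bogovski\u\i-type correction to kill the divergence of a naive extension, which is controlled because the flux vanishes), so that $\bfv=\bfu-\bfE$ solves a Stokes system with homogeneous boundary values, modified right-hand side still in $L^r(\mathcal I;L^p(\Omega))$, and initial datum in $W^{2,p}\cap L^2_{\Div}$. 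From now on assume $\bfu_\partial=0$.

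Next I would cover $\partial\Omega$ by the charts $\mathcal U^1,\dots,\mathcal U^\ell$ of \ref{A1}--\ref{A3} together with one interior patch, and pick a subordinate partition of unity. The interior piece is handled by the classical maximal regularity for the Stokes system on $\R^3$ (or on a smooth subdomain), so the heart of the matter is a single boundary chart. There, after translation/rotation and applying $\bfPsi=\bfPhi^{-1}$, the system \eqref{eq:Stokesunstay} becomes a perturbed Stokes system on (a bounded piece of) $\mathbb H$ of the form $\partial_t\bar\bfu=\mathcal A\bar\bfu-\mathcal B\bar\pi+\bar\bff$, $\mathcal C\bar\bfu=0$, $\bar\bfu|_{\partial\mathbb H}=0$, where $\mathcal A,\mathcal B,\mathcal C$ are second/first-order operators whose coefficients are built from $J=\nabla\bfPhi$, $\det J$ and $J^{-1}$; these coefficients equal a constant-coefficient part (the flat Laplacian, gradient, divergence) plus a perturbation whose size in the relevant operator norm is controlled by $\|\varphi\|_{\mathcal M^{2-1/p,p}(\R^2)}+\mathrm{Lip}(\varphi)\le c\,\delta$ via \eqref{eq:MS}, \eqref{est:ext}, \eqref{eq:detJ} and the product/composition rules \eqref{eq:SoMo'}, \eqref{lem:9.4.1}. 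The key technical input is that the Sobolev-multiplier hypothesis $\varphi\in\mathcal M^{2-1/p,p}(\R^2)(\delta)$ is exactly what makes multiplication by $\nabla\bfPhi-\mathrm{const}$ a bounded operator on $W^{1,p}_x$ (hence lower-order perturbations of the second-order operators map into $L^r_tL^p_x$ with small norm), so the perturbed operator is a small $L^r_tL^p_x$-bounded perturbation of the flat Stokes operator.

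For the flat half-space problem with constant coefficients I would invoke the known $L^r_tL^p_x$ maximal regularity for the Stokes system in $\mathbb H$ (Solonnikov/Giga--Sohr type results; equivalently the Stokes operator generates a bounded analytic semigroup with $\mathcal R$-bounded resolvent, so maximal regularity follows from the Weis theorem), giving the estimate \eqref{eq:mainpara} with $\Omega$ replaced by $\mathbb H$ and no perturbation. A standard Neumann-series / absorption argument then handles the small perturbation: moving the perturbative terms to the right-hand side and using their small $L^r_tL^p_x\to L^r_tL^p_x$ operator norm ($\lesssim\delta$) together with commutator estimates for the cut-off (the commutators are lower order and absorbed after a covering/iteration on slightly shrinking balls, à la Remark \ref{rem:cover}). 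Reassembling the local estimates through the partition of unity, transferring back via \eqref{lem:9.4.1} and \eqref{eq:detJ}, and adding the extension $\bfE$ back yields \eqref{eq:mainpara}; uniqueness follows from the estimate applied to the difference of two solutions (with zero data), noting the solution class embeds into the energy class where the standard Stokes uniqueness argument applies. The main obstacle I anticipate is precisely the perturbation step: verifying that every coefficient arising from $J,J^{-1},\det J$ — in particular the first-order "drift" and "pressure-gradient correction" terms and the modified divergence constraint — is genuinely a \emph{small} perturbation in the maximal-regularity topology under only the multiplier bound (not $W^{2,\infty}$), and controlling the pressure, which is only determined up to handling the perturbed divergence equation; this is where the smallness of $\delta$ and the sharp multiplier mapping properties \eqref{eq:SoMo'}--\eqref{lem:9.4.1} must be used carefully, mirroring the steady analysis of \cite{Br}.
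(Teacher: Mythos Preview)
Your proposal has the right building blocks and largely parallels the paper: reduction to $\bfu_\partial=0$ via extension plus Bogovski\u\i, localisation by the cover \ref{A1}--\ref{A3}, flat half-space maximal regularity as the unperturbed core, and the Sobolev-multiplier smallness $\|\varphi\|_{\mathcal M^{2-1/p,p}}\le\delta$ to make the coefficient perturbations small. Where you diverge is in the handling of the \emph{partition-of-unity commutators} and in the overall architecture. You propose to absorb the commutators by an iteration on shrinking balls ``mirroring the steady analysis of \cite{Br}''; the paper explicitly warns (see the remark preceding the proof) that the steady route does \emph{not} adapt directly, because the time derivative enters the Bogovski\u\i\ divergence-correction and the commutator terms $\nabla\xi_j\cdot\bfV_j$, $\nabla\xi_j\,\mathfrak Q_j$ are \emph{not} small in $\delta$. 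Instead the paper builds an explicit approximate \emph{right inverse} $\mathscr R$ to $\mathscr L=\mathcal P_p(\partial_t-\Delta)$ on $\Omega$: solve the \emph{constant-coefficient} Stokes problem on $\R^3$ and on $\mathbb H$ with the transformed right-hand side, pull the solutions back, sum against the cut-offs, and correct the divergence by $\Bog_\Omega$. One then shows $\mathscr L\mathscr R=\mathrm{id}+\mathscr T$; the coefficient-perturbation pieces of $\mathscr T$ are small in $\delta$ exactly as you say, but the commutator pieces are made small by taking the \emph{time interval short} (the half-space lower-order estimate $\|\bfU_j\|_{L^r_tW^{1,p}_x}+\|\mathfrak q_j\|_{L^r_tL^p_x}\lesssim T^{1/2}\|\bff\|_{L^r_tL^p_x}$), and the global-in-time result is obtained afterwards by gluing via a partition of unity on $[0,T]$. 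Your ``shrinking balls'' idea would give an a~priori estimate once a solution is in hand, but does not by itself construct the solution on $\Omega$; the small-time trick (or an equivalent device) is the missing ingredient in your sketch.
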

\begin{remark}
As shown in \cite[Chapter 14]{MaSh} the assumptions on $\partial\Omega$ in Theorem \ref{thm:stokesunsteady} are sharp already for the Laplace equation.
\end{remark}
\begin{remark}
Theorem \ref{thm:stokesunsteady} provides a parabolic counterpart of the result in the steady Stokes system from \cite[Theorem 3.1]{Br} (with the same assumptions on $\partial\Omega$).
A direct adaption of the ideas used in the proof of the steady analogue from \cite[Theorem 3.1]{Br} is not straightforward due to the appearance of the time-derivative in the divergence-correction.
Hence we follow instead the classical approach from \cite{So} (see also the presentation in \cite[Appendix 2]{Brb}).
The idea is to first solve the problem in the flat geometry
and built the solution of the original problem by concatenating the (transformed) solutions. This is somewhat the opposite way compared to \cite[Theorem 3.1]{Br} and leads to various lower order error terms. They can be controlled for small times and we obtain a global-in-time solution by gluing local solutions together.
\end{remark}
\begin{remark}\label{rem:div}
Since $\mathcal O$ is assumed to be a Lipschitz domain the Bogovskii operator has the usual properties, cf. \eqref{eq:bog} below, and we can extend the result of Theorem \ref{thm:stokesunsteady} to the Stokes problem with a given divergence $h\in L^r(\mathcal I;W^{1,p}({\Omega}))\cap W^{1,r}(\mathcal I;W^{-1,p}(\mathcal O))$. In this case the additional terms
$$\|h\|_{L^r(\mathcal I;W^{1,p}(\mathcal O))},\quad \|h\|_{W^{1,r}(\mathcal I;W^{-1,p}(\mathcal O))},$$
appear on the right-hand side of \eqref{eq:mainpara}.
\end{remark}
\begin{proof}
By use of a standard extension operator we can assume that $\bfu_\partial=0$. Otherwise, we can solve the homogeneous problem with solution $\tilde{\bfu}$ and set $$\bfu:=\tilde\bfu+\mathcal E_{{\Omega}}\bfu_\partial-\Bog_{\Omega}(\Div \mathcal E_{{\Omega}}\bfu_\partial)$$ where $$\mathcal E_{{\Omega}}:W^{2-1/p,p}(\partial{\Omega})\rightarrow W^{2,p}({\Omega})$$
is a continuous linear extension operator and $\Bog_{\Omega}$ the Bogovskii-operator. The latter solves the divergence equation (with respect to homogeneous boundary conditions on $\partial{\Omega}$) and satisfies 
\begin{align}\label{eq:bog}
\Bog_{\Omega}\Div :W^{2,p}\cap \bigg\{\bfw:\,\int_{\partial{\Omega}}\bfw\cdot\bfn\,\dd\mathcal H^{2}=0\bigg\}\rightarrow W^{2,p}_0({\Omega})
\end{align} for all $p\in (1,\infty)$, see \cite{Ga}[Section III.3].

We want to invert the operator
\begin{align*}
\mathscr L&:\mathscr Y_{r,p}\rightarrow L^r(\mathcal I; L^p_{\Div}(\mathcal O)),\quad
\bfv\mapsto\mathcal P_p\big(\partial_t\bfv-\Delta\bfv\big),
\end{align*}
where the space $\mathscr Y_{r,p}$ is given by
\begin{align*}
\mathscr Y_{r,p}:= L^r(\mathcal I;W^{1,p}_{0,\Div}\cap W^{2,p}({\Omega}))\cap W^{1,r}(\mathcal I;L^{p}({\Omega}))\cap\set{\bfv(0,\cdot)=0}
\end{align*}
and $\mathcal P_p$ is the Helmholtz projection from $L^p({\Omega})$ onto $L^p_{\Div}({\Omega})$. 
The Helmholtz-projection $\mathcal P_p \bfu$ of a function $\bfu\in L^p({\Omega})$ is defined as $\mathcal P_p \bfu:=\bfu-\nabla h$, where $h$ is the solution to the Neumann-problem
\begin{align*}
\begin{cases}\Delta h=\Div \bfu\quad\text{in}\quad {\Omega},\\
\bfn\cdot(\nabla h-\bfu)=0\quad\text{on}\quad\partial {\Omega}.
\end{cases}
\end{align*}
We will try to find an operator $\mathscr R:L^r(\mathcal I; L^p_{\Div}(\Omega))\rightarrow\mathscr Y_{r,p}$ such that
\begin{align}\label{eq:IS}
\mathscr L\circ\mathscr R=\mathrm{id}+\mathscr T,
\end{align}
where the operator-norm of $\mathscr T$ is strictly smaller than 1. This implies that the range of $\mathscr L\circ\mathscr R$ (which then equals to $L^r(\mathcal I;L^p_{\Div}(\Omega))$) is contained in the range of $\mathscr L$. Hence $\mathscr L$ is onto.

By assumption there is $\ell\in\mathbb N$ and functions $\varphi_1,\dots,\varphi_\ell\in\mathcal M^{2-1/p,p}(\delta)(\mathbb R^{2})$ satisfying \ref{A1}--\ref{A3}.
We clearly find an open set $\mathcal U^0\Subset{\Omega}$ such that ${\Omega}\subset \cup_{j=0}^\ell \mathcal U^j$. Finally, we consider a decomposition of unity $(\xi_j)_{j=0}^\ell$ with respect to the covering
$\mathcal U^0,\dots,\mathcal U^\ell$ of ${\Omega}$. 
For $j\in\{1,\dots,\ell\}$ we consider the extension $\bfPhi_j$ of $\varphi_j$ given by \eqref{est:ext} with inverse $\bfPsi_j$.
Denoting by $\mathscr V_jx=\mathcal Q_j^\top(x-y_j)$ (with the translation $y_j$ and the rotation $\mathcal Q_j$ used in \ref{A1}--\ref{A3}, cf. Section \ref{sec:para})
we define the operators\footnote{Note that by means of a standard extension operator we extend functions in \eqref{eq:operators}--\eqref{eq:stokeshalf} to the whole space or half space when necessary.}
\begin{align}\label{eq:operators}
\mathscr R_0\bff&:=\xi_0 \bfU_0+\sum_{j=1}^\ell\xi_j \bfU_j \circ\bfPsi_j\circ\mathscr V_j ,\quad
\mathscr P\bff:=\sum_{j=1}^\ell\xi_j\mathfrak q_j \circ\bfPsi_j\circ\mathscr V_j.
\end{align}
Here the functions $(\bfU_0,\mathfrak q_0)$ and $(\bfU_j,\mathfrak q_j)$ for $j\in\{1,\dots,\ell\}$ are the solutions to the Stokes problem on the whole space and the half space with data $\bff$  respectively (transformed if necessary), that is, we have
\begin{align}\label{eq:stokeswhole}
\partial_t\bfU_0=\Delta \bfU_0-\nabla\mathfrak q_0+\bff,\quad\Div\bfU_0=0,\quad\bfU_0|_{I\times\partial\mathbb H}=0,\quad \bfU_0(0,\cdot)=0,
\end{align}
and 
\begin{align}\label{eq:stokeshalf}
\partial_t\bfU_j=\Delta \bfU_j-\nabla\mathfrak q_j+\bff\circ\mathscr V_j^{-1}\circ \bfPhi_j,\quad\Div\bfU_j=0,\quad\bfU_j|_{I\times\partial\mathbb H}=0,\quad \bfU_j(0,\cdot)=0.
\end{align}
We have 
\begin{align}\label{est:stokeswhole}
\begin{aligned}
\int_{\mathcal I}\Big(\|\partial_t\bfU_0\|_{L^{p}_x}^r+\|\nabla^2\bfU_0\|^r_{L^{p}_x}+\|\nabla\mathfrak q_0\|_{L^{p}_x}^r\Big)\dt&\lesssim \int_{\mathcal I}\|\bff\|_{L^{p}_x}^r\dt,\\
\int_{\mathcal I}\Big(\|\partial_t\bfU_0\|_{W^{-1,p}_x}^r+\|\bfU_0\|_{W^{1,p}_x}^r+\|\mathfrak q_0\|_{L^{p}_x}^r\Big)\dt&\lesssim T^{r/2}\int_{\mathcal I}\|\bff\|_{L^{p}_x}^r\dt,
\end{aligned}
\end{align}
and for $j=1,\dots,\ell$
\begin{align}\label{est:stokeshalf}
\begin{aligned}
\int_{\mathcal I}\Big(\|\partial_t\bfU_j\|_{L^{p}_x}^r+\|\nabla^2\bfU_j\|_{L^{p}_x}^r+\|\nabla\mathfrak q_j\|_{L^{r}_x}^r\Big)\dt&\lesssim\int_{\mathcal I}\|\bff\circ\mathscr V_j\circ \bfPhi_j\|_{L^{p}_x}^r\dt,\\
\int_{\mathcal I}\Big(\|\partial_t\bfU_j\|_{W^{-1,p}_x}^r+\|\bfU_j\|_{W^{1,p}_x}^r+\|\mathfrak q_j\|_{L^{p}_x}^r\Big)\dt&\lesssim T^{r/2}\int_{\mathcal I}\|\bff\circ\mathscr V_j\circ \bfPhi_j\|_{L^{p}_x}^r\dt,
\end{aligned}
\end{align}
uniformly in $T$. Note that estimates \eqref{est:stokeswhole}$_2$ and \eqref{est:stokeshalf}$_2$ only hold locally in space (that is, in balls $B\subset\R^n$ with a constant depending on the radius). This does not cause any problems due to the localisation functions appearing in \eqref{eq:operators}.  
 Since Lipschitz continuity of $\varphi_j$ implies that of $\bfPhi_j$, cf. \eqref{J},
 we can control the right-hand sides in \eqref{est:stokeshalf} by $\|\bff\|_{L^r_tL^{p}_x}^r$.
Estimates \eqref{est:stokeswhole} and \eqref{est:stokeshalf} are classical in the case $r=p$, see \cite[Theorems 3.1 \& 3.2]{So}.
For the case of arbitrary exponents $p$ and $r$ we refer to \cite{HS} and the references therein.
Note that the $T$-dependence in \eqref{est:stokeswhole}$_2$ and \eqref{est:stokeshalf}$_2$ follows by simple scaling argument.

The divergence of $\mathscr R_0\bff$  as defined in \eqref{eq:operators} is in general not zero.
This will be corrected by setting
\begin{align*}
\mathscr R\bff=\mathscr R_0\bff+\mathscr R_1\bff,\quad \mathscr R_1\bff=-\Bog_{{\Omega}}\Div\mathscr R_0\bff
\end{align*}
with the Bogovskii-operator $\Bog_{\Omega}$, cf. \eqref{eq:bog}.
Now we clearly have $\mathscr R\bff\in \mathscr Y_{r,p}$ and the aim is to establish (\ref{eq:IS}). Transforming $\bfU_j$ and $\mathfrak q_j$ back to ${\Omega}$, that is, setting $\bfV_j=\bfU_j\circ\bfPsi_j\circ\mathscr V_j$ and $\mathfrak Q_j=\mathfrak q_j\circ\bfPsi_j\circ\mathscr V_j $, we obtain
\begin{align}\label{eq:Stokesback}
\begin{aligned}
\partial_t\bfV_j=&\Delta\bfV_j-\nabla\mathfrak Q_j+(1-\mathrm{det}(\nabla\bfPsi_j))\partial_t\bfV_j-\Div\big((\mathbb I_{3\times 3}-\bfA_j)\nabla\bfV_j)-\Div((\mathbf{B}_j-\mathbb I_{3\times 3})\mathfrak Q_j)+\bff,\\
&\Div\bfV_j=(\mathbb I_{3\times 3}-\mathbf{B}_j)^\top:\nabla\bfV_j,\quad\bfV_j|_{\partial{\Omega}\cap \mathcal U^j}=0,\quad \bfV_j(0,\cdot)=0,
\end{aligned}
\end{align}
where $\bfA_j:=\mathrm{det}(\nabla\bfPsi_j)\nabla\bfPhi_j^\top\circ\bfPsi_j\nabla\bfPhi_j\circ\bfPsi_j$ and $\mathbf{B}_j:=\mathrm{det}(\nabla\bfPsi_j)\nabla\bfPhi_j\circ\bfPsi_j$.
 There holds
\begin{align}
\partial_t\mathscr R\bff&-\Delta\mathscr R\bff+\nabla \mathscr P\bff=\bff+\mathscr S\bff+(\partial_t-\Delta)\mathscr R_1\bff,\label{eq:4.10}\\
\mathscr S\bff&=-\nabla\bfV_0\nabla\xi_0-\Div\big(\nabla\xi_0\otimes\bfV_0\big)
-\sum_{j=1}^\ell\nabla\bfV_j \nabla\xi_j\nonumber\\&-\sum_{j=1}^\ell\Div\big(\nabla\xi_j\otimes\bfV_j\big)+\sum_{j=1}^\ell\nabla\xi_j \mathfrak Q_j
-\sum_{j=1}^\ell\xi_j\Div((\mathbf{B}_j-\mathbb I_{3\times 3})\mathfrak Q_j)\label{eq:4.11}\\
&-\sum_{j=1}^\ell\xi_j\Div\big((\mathbb I_{3\times 3}-\bfA_j)\nabla\bfV_j)+\sum_{j=1}^\ell\xi_j(1-\mathrm{det}(\nabla\bfPsi_j))\partial_t\bfV_j=:\sum_{i=1}^8 \mathscr S_i\bff.\nonumber
\end{align}
From (\ref{eq:4.10}) it follows
\begin{align*}
\mathscr L\mathscr R\bff=\bff+\mathcal P_p\mathscr S\bff+\mathcal P_p(\partial_t-\Delta)\mathscr R_1\bff,
\end{align*}
i.e., (\ref{eq:IS}) with $\mathscr T=\mathcal P_p\mathscr S+\mathcal P_p(\partial_t-\Delta)\mathscr R_1$. 
We claim that
\begin{align}\label{eq:0401}
\sum_{i=1}^5\int_{\mathcal I}\|\mathscr S_i\bff\|^r_{L^{p}_x}\dt\leq\,\delta(T)\int_{\mathcal I}\|\bff\|_{L^{p}_x}^r\dt
\end{align}
with $\delta(T)\rightarrow0$ for $T\rightarrow0$.\footnote{Note that this will also depend on $\ell$ which we have to choose sufficiently large to obtain $\max_j\|\varphi_j\|_{W^{1,\infty}_y}\approx \mathrm{Lip}(\partial{\Omega})$.} Estimate
\eqref{eq:0401} follows from estimates
\eqref{est:stokeswhole} and \eqref{est:stokeshalf}.
We can translate \eqref{est:stokeshalf} into an estimate
for $\bfV_j$ and $\mathfrak Q_j$
via
\begin{align}\label{eq:2901}
\begin{aligned}
\|\bfV_j\|_{W^{\sigma,p}_x}
\lesssim \|\bfU_j\|_{W^{\sigma,p}_x}
\end{aligned}
\end{align}
for $\sigma\in\{1,2\}$ and similarly
\begin{align}\label{eq:2901'}
\begin{aligned}
\|\mathfrak Q_j\|_{W^{\sigma-1,p}_x}
\lesssim \|\mathfrak q_j\|_{W^{\sigma-1,p}_x}
\end{aligned}
\end{align}
recalling estimates \eqref{eq:detJ} and \eqref{lem:9.4.1}. Note that by our assumptions on $\varphi_j$ and \eqref{eq:Phi} we have
 $\bfPhi_j\in \mathcal M^{2,p}(\mathbb H)$ and thus $\bfPsi_j\in \mathcal M^{2,p}(\mathbb H)$ by \eqref{eq:SMPhiPsi}. Combining the previous arguments proves \eqref{eq:0401}.

Now we are concerned with $\mathscr S_6$ and $\mathscr S_7$
obtaining
by \eqref{J}, \eqref{lem:9.4.1} and the definitions of $\bfA_j$ and $\bfPhi_j$
\begin{align*}
\|\xi_j\Div&\big((\mathbb I_{3\times 3}-\bfA_j)\nabla\bfV_j)\|_{L^{p}(\mathbb H)}\\&\lesssim \sup_{\|\bfw\|_{W^{1,p}_x}\leq 1}\|(\mathbb I_{3\times 3}-\bfA_j)\bfw\|_{W^{1,p}(\mathbb H)}\|\nabla\bfV_j\|_{W^{1,p}_x}\\
&\lesssim \sup_{\|\bfw\|_{W^{1,p}(\mathbb H)}\leq 1}\|(1-\mathrm{det}(\nabla\bfPsi_j))\bfw\|_{W^{1,p}(\mathbb H)}\|\bfV_j\|_{W^{2,p}_x}\\
&+ \sup_{\|\bfw\|_{W^{1,p}_x}\leq 1}\|\mathrm{det}(\nabla\bfPsi_j)(\mathbb I_{3\times 3}-\nabla\bfPhi_j^\top\circ\bfPsi_j)\bfw\|_{W^{1,p}(\mathbb H)}\|\bfV_j\|_{W^{2,p}_x}\\
&+ \sup_{\|\bfw\|_{W^{1,p}_x}\leq 1}\|\mathrm{det}(\nabla\bfPsi_j)\nabla\bfPhi_j^\top\circ\bfPsi_j(\mathbb I_{3\times 3}-\nabla\bfPhi_j\circ\bfPsi_j)\bfw\|_{W^{1,p}(\mathbb H)}\|\bfV_j\|_{W^{2,p}_x}\\
&\lesssim \|\mathcal T\phi_j\|_{\mathcal M^{2,p}(\mathbb H)}\|\bfv\|_{W^{2,p}_x}\\&+ \|\bfPsi_j\|_{\mathcal M^{2,p}(\mathbb H)}^3\sup_{\|\bfw\|_{W^{1,p}_x}\leq 1}\|(\mathbb I_{3\times 3}-\nabla\bfPhi_j\circ\bfPsi_j)\bfw\|_{W^{1,p}(\mathbb H)}\|\bfV_j\|_{W^{2,p}_x}\\
&+ \|\bfPsi_j\|_{\mathcal M^{2,p}(\mathbb H)}^3\|\bfPhi_j\|_{\mathcal M^{2,p}(\mathbb H)}\sup_{\|\bfw\|_{W^{1,p}_x}\leq 1}\|(\mathbb I_{3\times 3}-\nabla\bfPhi_j\circ\bfPsi_j)\bfw\|_{W^{1,p}(\mathbb H)}\|\bfV_j\|_{W^{2,p}_x}\\
&\lesssim \Big(\|\mathcal T\phi_j\|_{\mathcal M^{2,p}(\mathbb H)}+ \sup_{\|\bfw\|_{W^{1,p}_x}\leq 1}\|(\mathbb I_{3\times 3}-\nabla\bfPhi_j\circ\bfPsi_j)\bfw\|_{W^{1,p}(\mathbb H)}\Big)\|\bfV_j\|_{W^{2,p}_x},
\end{align*}
where 
\begin{align*}
\sup_{\|\bfw\|_{W^{1,p}_x}\leq 1}&\|(\mathbb I_{3\times 3}-\nabla\bfPhi_j\circ\bfPsi_j)\bfw\|_{W^{1,p}(\mathbb H)}\\
&\lesssim \|\mathcal T\phi_j\circ\bfPsi_j\|_{\mathcal M^{2,p}(\mathbb H)}\lesssim \|\mathcal T\phi_j\|_{\mathcal M^{2,p}(\mathbb H)}.
\end{align*}
So we finally have
\begin{align*}
\|\xi_j\Div\big((\mathbb I_{3\times 3}-\bfA_j)\nabla\bfV_j)\|_{L^{p}(\mathbb H)}&\lesssim \|\mathcal T\phi_j\|_{\mathcal M^{2,p}(\mathbb H)}\|\bfV_j\|_{W^{2,p}(\mathbb H)}
\end{align*}
and, similarly, 
\begin{align*}
\|\xi_j\Div((\mathbf{B}_j-\mathbb I_{3\times 3})\mathfrak Q_j)\|_{L^{p}(\mathbb H)}&\lesssim \sup_{\|\bfw\|_{W^{1,p}(\mathbb H)}\leq 1} \|(\mathbf{B}_j-\mathbb I_{3\times 3})\bfw\|_{W^{1,p}(\mathbb H)}\|\mathfrak Q_j\|_{W^{1,p}(\mathbb H)}\\
&\lesssim \|\mathcal T\phi_j\|_{\mathcal M(W^{2,p}(\mathbb H))}\|\mathfrak Q_j\|_{W^{1,p}(\mathbb H)}.
\end{align*}
By \eqref{eq:MS} we have
\begin{align}\label{eq:MS'}
\|\mathcal T\phi_j\|_{\mathcal M^{2,p}(\mathbb H)}\lesssim \|\varphi_j\|_{\mathcal M^{2-1/p,p}(\mathbb H)},
\end{align}
where the right-hand side is conveniently by assumption.
Hence we have
\begin{align*}
\int_{\mathcal I}\Big(\|\mathscr S_6\bff\|^r_{L^{p}_x}+\|\mathscr S_7\bff\|_{L^{p}_x}^r\Big)\dt\leq\,\delta(\mathrm{Lip}({\Omega}))\sum_{j=1}^\ell\int_{\mathcal I}\big(\|\bfV_j\|_{W^{2,p}_x}^r+\|\mathfrak Q_j\|_{W^{1,p}_x}^r\big)\dt.
\end{align*}
Using again the estimates for the problem on the half space from \eqref{est:stokeshalf} as well as \eqref{eq:2901} and \eqref{eq:2901'} we conclude
\begin{align*}
\int_{\mathcal I}\Big(\|\mathscr S_6\bff\|^r_{L^{p}_x}+\|\mathscr S_7\bff\|^r_{L^{p}_x}\Big)\dt\leq\,\delta'(\mathrm{Lip}({\Omega}))\int_{\mathcal I}\|\bff\|^r_{L^{p}_x}\dt.
\end{align*}
Both $\delta(\mathrm{Lip}({\Omega}))$ and $\delta'(\mathrm{Lip}({\Omega}))$ can be chosen conveniently small in dependence on $\mathrm{Lip}({\Omega})$.
Similarly, we have
\begin{align*}
\int_{\mathcal I}\|\mathscr S_8\bff\|_{L^{p}_x}^r\dt\leq\,\delta(\mathrm{Lip}({\Omega}))\sum_{j=1}^\ell\int_{\mathcal I}\|\partial_t\bfV_j\|_{L^{p}_x}^r\dt\leq \delta'(\mathrm{Lip}({\Omega}))\int_{\mathcal I}\|\bff\|^r_{L^{p}_x}\dt
\end{align*}
using once more \eqref{est:stokeshalf} in the last step.
In conclusion, choosing first $\ell$ large enough and
 then $T$ small enough we can infer that
\begin{align}\label{eq:july24}
\int_{\mathcal I}\|\mathscr S\bff\|^r_{L^{p}_x}\dt\leq\,\tfrac{1}{4}\int_{\mathcal I}\|\bff\|_{L^{p}_x}^r\dt.
\end{align}
Now we are going to show the same for $(\partial_t-\Delta)\mathscr R_1$. We have
\begin{align*}
\Div\mathscr R_0\bff=\nabla\xi_0\cdot\bfU_0+\sum_{j=1}^\ell\nabla\xi_j\cdot\bfV_j+\sum_{j=1}^\ell\xi_j(\mathbb I_{3\times 3}-\bfB_j)^\top:\nabla\bfV_j
\end{align*}
such that
\begin{align*}
\|\partial_t\mathscr R_1\bff\|_{L^{p}_x}&\lesssim \|\Bog_{{\Omega}}(\nabla\xi_0\cdot\partial_t\bfU_0)\|_{L^{p}_x}+\sum_{j=1}^\ell\|\Bog_{{\Omega}}(\nabla\xi_j\cdot\partial_t\bfV_j)\|_{L^{p}_x}\\
&+ \sum_{j=1}^\ell\|\Bog_{\Omega}\big(\xi_j(\mathbb I_{3\times 3}-\mathbf{B}_j)^\top:\nabla\partial_t\bfV_j\big)\|_{L^{p}_x}\\
&\lesssim \|\nabla\xi_0\cdot\partial_t\bfU_0\|_{W^{-1,p}_x}+\sum_{j=1}^\ell\|\nabla\xi_j\cdot\partial_t\bfV_j\|_{W^{-1,p}_x}\\
&+ \sum_{j=1}^\ell\|\xi_j(\mathbb I_{3\times 3}-\bfB_j)^\top:\nabla\partial_t\bfV_j\|_{W^{-1,p}_x}\\
&=: (R)_1+(R)_2+(R)_3
\end{align*}
using continuity of the Bogovskii-operator, cf. \eqref{eq:bog}.
Since $\bfU_0$ solves \eqref{eq:stokeswhole} we infer from \eqref{est:stokeswhole} that
\begin{align*}
\int_{\mathcal I}(R)_1^r\dt\lesssim \int_{\mathcal I}\|\partial_t\bfU_0\|_{W^{-1,p}_x}^r\dt\lesssim T^{r/2}\int_{\mathcal I}\|\bff\|_{L^{p}_x}^r\dt.
\end{align*}
Similarly, we obtain from \eqref{est:stokeshalf}
\begin{align*}
\int_{\mathcal I}(R)_2^r\dt\lesssim \sum_{j=1}^\ell\int_{\mathcal I}\|\partial_t\bfV_j\|_{W^{-1,p}_x}^r\dt\lesssim\sum_{j=1}^\ell\int_{\mathcal I}\|\partial_t\bfU_j\|_{W^{-1,p}_x}^r\dt\lesssim T^{r/2}\int_{\mathcal I}\|\bff\|_{L^{p}_x}^r\dt
\end{align*}
using also $\bfV_j=\bfU_j\circ\bfPhi_j\circ \mathscr V_j$ and \eqref{J} as well as \eqref{eq:MSa}--\eqref{lem:9.4.1}. Finally, arguing again as in the estimates for $\mathscr S_7$ above, and using $\Div\mathbf B_j=0$ (which holds as a consequence of the Piola-identity)
\begin{align*}
\int_{\mathcal I}(R)_3^r\dt&\lesssim \sum_{j=1}^\ell\int_{\mathcal I}\|\xi_j(\mathbb I_{3\times 3}-\mathbf B_j)^\top:\nabla\partial_t\bfV_j\|_{W^{-1,p}_x}^r\dt\\&\lesssim  \sum_{j=1}^\ell\int_{\mathcal I}\|\mathbb I_{3\times 3}-\mathbf{B}_j\|_{\mathcal M^{1,p}({\Omega})}^r\|\partial_t\bfV_j\|_{L^{p}_x}^r\dt\\
&\lesssim  \delta(\mathrm{Lip}(\partial{\Omega}))\int_{\mathcal I}\|\bff\|_{L^{p}_x}^r\dt.
\end{align*}
In conclusion, we have shown
\begin{align}\label{eq:july28}
\int_{\mathcal I}\|\partial_t\mathscr R_1\bff\|_{L^{p}_x}^r\dt&\leq\,\tfrac{1}{4}\int_{\mathcal I}\|\bff\|_{L^{p}_x}^r\dt,
\end{align}
for $T$ and $\mathrm{Lip}(\partial\Omega)$ sufficiently small. As far 
as $\Delta\mathscr R_1\bff$ is concerned, we have analogously
\begin{align*}
\int_{\mathcal I}\|\Delta\mathscr R_1\bff\|_{L^{p}_x}^r\dt&\lesssim \int_{\mathcal I}\|\Bog_{{\Omega}}(\nabla\xi_0\cdot\bfU_0)\|_{W^{2,p}_x}^r\dt+\sum_{j=1}^\ell\int_{\mathcal I}\|\Bog_{{\Omega}}(\nabla\xi_j\cdot\bfV_j)\|_{W^{2,p}_x}^r\dt\\
&+ \sum_{j=1}^\ell\int_{\mathcal I}\|\Bog_{\Omega}\big(\xi_j(\mathbb I_{3\times 3}-\mathbf{B}_j)^\top:\nabla\bfV_j\big)\|_{W^{2,p}_x}^r\dt\\
&\lesssim \int_{\mathcal I}\|\nabla\xi_0\cdot\bfU_0\|_{W^{1,p}_x}^r\dt+\sum_{j=1}^\ell\int_{\mathcal I}\|\nabla\eta_j\cdot\bfV_j\|_{W^{1,p}_x}^r\dt\\
&+ \sum_{j=1}^\ell\int_{\mathcal I}\|\xi_j(\mathbb I_{3\times 3}-\mathbf{B}_j)^\top:\nabla\bfV_j\|_{W^{1,p}_x}^r\dt\\
&\lesssim \int_{\mathcal I}\|\bfU_0\|_{W^{1,p}_x}^r\dt+\sum_{j=1}^\ell\int_{\mathcal I}\|\bfU_j\|_{W^{1,p}_x}^r\dt\\
&+ \sum_{j=1}^\ell\int_{\mathcal I}\|\mathbb I_{3\times 3}-\mathbf{B}_j\|^r_{\mathcal M^{1,p}({\Omega})}\|\bfU_j\|_{W^{2,p}_x}^r\dt\\
&\lesssim T^{r/2}\int_{\mathcal I}\|\bff\|_{L^{p}_x}^r\dt+ \delta(\mathrm{Lip}(\partial{\Omega}))\int_{\mathcal I}\|\bff\|_{L^{p}_x}^r\dt.
\end{align*}
Note that we also made use of \eqref{eq:2901}.
This implies
\begin{align}\label{eq:july28B}
\int_{\mathcal I}\|\Delta\mathscr R_1\bff\|_{L^{p}_x}^r\dt&\leq\,\tfrac{1}{4}\int_{\mathcal I}\|\bff\|_{L^{p}_x}^r\dt
\end{align}
choosing $T$ and $\mathrm{Lip}(\partial\Omega)$  small enough.
Combining \eqref{eq:july24}, \eqref{eq:july28} and \eqref{eq:july28B} implies $\|\mathscr T\|\leq\tfrac{3}{4}$. Hence $\mathscr L$ is onto recalling \eqref{eq:IS}. This means we have shown the claim for $T$ sufficiently small, say $T=T_0\ll1$. It is easy to extend it to the whole interval.
Let $(\bfu,\pi)$ be the solution in $[0,T]$. In order to obtain a solution on the whole interval we consider a partition of unity
$(\psi_k)_{k=1}^K$ on $[0,T]$ such that $\mathrm{spt}(\psi_k)\subset(\alpha_k,\alpha_k+T_0]$ for some $(\alpha_k)_{k=2}^K\subset[0,T]$ and $\alpha_1=0$. The functions $(\bfu_k,\pi_k)=(\psi_k\bfu,\psi_k\pi)$ are the unique solutions to
\begin{align*}
\partial_t\bfu_k=\Delta \bfu_k-\nabla\pi_k+\bff+\psi_k'\bfu,\quad\Div\bfu_k=0,\quad\bfu_k|_{(\alpha_k,\alpha_k+T_0)\times\partial{\Omega}}=0,\quad \bfu_k(\alpha_k,\cdot)=0.
\end{align*}
Applying the result proved for the interval $[0,T_0]$ and noticing that $\bfu=\sum_{k=1}^K\bfu_k$ and $\pi=\sum_{k=1}^K\pi_k$ proves the claim in the general case.
\end{proof}

\section{The perturbed system}\label{sec:pert}
In this section we develop a theory for some perturbed Stokes and Navier--Stokes systems which arise from the original one by flattening the boundary (introducing local coordinates as in Section \ref{sec:para}).
The perturbed Navier--Stokes system will be the basis for the partial regularity proof in Section \ref{sec:blowup} in which we compare its solution locally to a solution to the perturbed Stokes system.
By means of Sobolev multipliers we now derive optimal assumptions concerning the coefficients in the latter allowing for a maximal  regularity theory.

\subsection{Perturbed Navier--Stokes equations}
For a boundary suitable weak solution $(\bfu,\pi,\varphi)$ to \eqref{1} we define $\overline \pi=\pi\circ\bfPhi$, $\overline{\bfu}=\bfu\circ\bfPhi$ and  $\overline{\bff}=\bff\circ\bfPhi$, where $\bfPhi$ is the extension of $\varphi$ given in \eqref{eq:Phi}. We also introduce
\begin{align}\label{eq:AB}
\bfA_\varphi&=J_\varphi\big(\nabla \bfPsi\circ\bfPhi\big)^{\top}\nabla \bfPsi\circ\bfPhi,\quad\bfB_\varphi=J_\varphi\nabla \bfPsi\circ\bfPhi,
\end{align}
where $J_\varphi=\mathrm{det}(\nabla\bfPhi)$.
We see that $(\overline\bfu,\overline\pi,\varphi)$ is a solution to the system
\begin{align}\label{momref}
J_{\varphi}\partial_t\overline\bfu+(\bfB_\varphi\nabla\overline\bfu)\overline\bfu+\Div\big(\bfB_{\varphi}\overline\pi\big)-\Div\big(\bfA_{\varphi}\nabla\overline\bfu\big)&=J_\varphi\overline\bff,\\
\label{divref}\bfB_{\varphi}^\top:\nabla\overline\bfu=0,\quad\overline\bfu|_{\partial \mathcal B_1^+\cap\partial\mathbb H}&=0,
\end{align}
a.a. in $\mathcal Q_1^+$. 
Note that it may be necessary to translate and scale the coordinates in space-time to arrive at a system posed in $\mathcal Q_1^+:=\mathcal Q_1(0,0)$ (rather than in $\mathcal Q_r(t_0,x_0)$ for some $r>0$, $t_0\in \mathcal I$ and $x_0\in\partial\mathbb H$).
 Similarly, we can transform the local energy inequality leading to
\begin{equation}\label{energylocal}
\begin{split}
\int_ {\mt}\frac{1}{2}J_\varphi\zeta\big| \overline\bfu(t)\big|^2\dx&+\int_0^t\int_ {\mt}\zeta\bfA_\varphi\nabla \overline\bfu:\nabla \overline\bfu\dx\ds\\& \leq\int_0^t\int_{\mt}\frac{1}{2}J_\varphi |\overline\bfu|^2\partial_t\zeta\dx\ds+\int_0^t\int_{\mt}\frac{1}{2}J_\varphi| \overline\bfu|^2\Delta\bfPsi\circ\bfPhi\cdot\nabla\zeta\dx\ds\\&+\int_0^t\int_{\mt}\frac{1}{2}| \overline\bfu|^2\bfA_\varphi:\nabla^2\zeta\dx\ds+\int_0^t\int_{\mt}\frac{1}{2}\big(|\overline\bfu|^2+2\overline\pi\big)\overline\bfu\cdot\bfB_\varphi\nabla\zeta\dx\ds\\
&+\int_0^t\int_{\mt}J_\varphi\zeta\overline\bff\cdot\overline\bfu\dxs
\end{split}
\end{equation}
for any $\zeta\in C^\infty_c(\mathcal Q_1)$ with $\zeta\geq0$.

\begin{definition}[Boundary suitable weak solution perturbed system] \label{def:weakSolutionflat}
Let $(\overline\bff, \overline\bfu_0)$ be a dataset such that
\begin{equation}
\begin{aligned}
\label{dataset}
&\overline\bff \in L^{r_\ast}\big(\mathcal I_1; L^{s_\ast}(\mathcal B_1^+))\big),\quad 
\overline\bfu_0\in W^{2,s_\ast}\cap W^{1,2}_{0,\mathrm{\Div}}(\mathcal B_1^+).
\end{aligned}
\end{equation} 
We call the triple
$(\overline\bfu,\overline\pi,\varphi)$
a boundary suitable weak solution to the perturbed Navier--Stokes system \eqref{momref} with data $(\overline\bff, \overline\bfu_0)$ provided that the following holds:
\begin{itemize}
\item[(a)] The velocity field $\overline\bfu$ satisfies
\begin{align*}
 \overline\bfu \in L^\infty \big(\mathcal I_1; L^2(\mathcal B_1^+) \big)\cap  L^2 \big(\mathcal I_1; W^{1,2}_{\Div}(\mathcal B_1^+) \big) \cap L^{r_\ast}(\mathcal I_1;W^{2,s_\ast}(\mathcal B_1^+))\cap W^{1,r_\ast}(\mathcal I_1;L^{s_\ast}(\mathcal B_1^+)).
\end{align*}
\item[(b)] The pressure $\overline\pi$ satisfies
$$\overline\pi\in L^{r_\ast}(\mathcal I_1;W^{1,s_\ast}_\perp(\mathcal B_1^+)).$$
\item[c)] The boundary coordinates satisfy
\eqref{eq:varphi}.
\item[(d)] The system \eqref{momref}--\eqref{divref}
holds a.a. in $\mathcal Q^+_1$.
\item[(e)] The local energy inequality \eqref{energylocal} holds for any $\zeta\in C_c^\infty(\mathcal Q_1)$ with $\zeta\geq0$.
\end{itemize}
\end{definition}
A crucial part of the partial regularity proof in Section \ref{sec:blowup} will be the comparison of a boundary suitable weak solution of the perturbed Navier--Stokes system with a solution of the perturbed Stokes system. The analysis of the latter is the content of the following subsection.

\subsection{Perturbed Stokes equations}
In this section we consider, in analogy to \eqref{momref}--\eqref{divref}, a perturbed Stokes system in $\mathcal Q_1^+$ of the form
\begin{align}\label{eq:pertstokes}
\begin{aligned}
J_{\varphi}\partial_t\overline\bfu+\Div\big(\bfB_{\varphi}\overline{\pi}\big)-\Div\big(\bfA_{\varphi}\nabla\overline\bfu\big)&=\overline \bfg,\\
\bfB_{\varphi}^\top:\nabla\overline{\bfu}=\overline h,\quad\overline\bfu|_{\mathcal B^+_{1}\cap\partial\mathbb H}&=0,\quad\overline{\bfu}(-1,\cdot)=0,
\end{aligned}
\end{align}
where $\bfA_\varphi$ and $\bfB_\varphi$ are given in accordance with \eqref{eq:AB} for a given function $\varphi:\R^{2}\rightarrow\R$ and $\overline \bfg$ and $\overline h$ are given data.
We obtain the following maximal regularity result.
\begin{lemma}\label{lem:31}
Let $p,r\in(1,\infty)$.
 Suppose that $\varphi\in \mathcal M^{2-1/p,p}(\R^{2})(\delta)$ and that $\|\varphi\|_{W^{1,\infty}_y}\leq \delta$ 
 for some sufficiently small $\delta$. Assume further that $\overline \bfg\in L^r(\mathcal I_1;L^{p}(\mathcal B_1^+))$ and $\overline h\in L^r(\mathcal I_1;W^{1,p}\cap L^p_\perp(\mathcal B_1^+))$ with $\partial_t \overline h\in L^r(\mathcal I_1;W^{-1,p}(\mathcal B_1^+))$ and $\overline h(0,\cdot)=0$. Then there is a unique solution $(\overline\bfu,\overline\pi)$ to \eqref{eq:pertstokes} which satisfies
\begin{align}\label{eq:mainpara}
\begin{aligned}
\|\partial_t\overline\bfu\|_{L^r(\mathcal I_1;L^p(\mathcal B^+_{1}))}&+\|\overline\bfu\|_{L^r(\mathcal I_1;W^{2,p}(\mathcal B^+_{1}))}
+\|\overline\pi\|_{L^r(\mathcal I_1;W^{1,p}(\mathcal B^+_{1}))}\\&\lesssim \|\overline \bfg\|_{L^r(\mathcal I_1;L^p(\mathcal B^+_{1}))}+\|\nabla \overline h\|_{L^r(\mathcal I_1;L^{p}(\mathcal B^+_{1}))}+\|\partial_t \overline h\|_{L^r(\mathcal I_1;W^{-1,p}(\mathcal B^+_{1}))},
\end{aligned}
\end{align}
where the hidden constant only depends on $p,r$ and $\delta$.
\end{lemma}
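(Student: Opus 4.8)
\emph{Overall strategy.} The plan is to treat \eqref{eq:pertstokes} as a small perturbation of the flat unsteady Stokes system and to invert the associated operator by a Neumann series in the natural maximal regularity class. Writing $J_\varphi=1+(J_\varphi-1)$, $\bfA_\varphi=\mathbb I+(\bfA_\varphi-\mathbb I)$ and $\bfB_\varphi=\mathbb I+(\bfB_\varphi-\mathbb I)$ in \eqref{eq:pertstokes} and moving every perturbation contribution to the right-hand side, the system becomes
\begin{align*}
\partial_t\overline\bfu-\Delta\overline\bfu+\nabla\overline\pi&=\overline\bfg+(1-J_\varphi)\partial_t\overline\bfu-\Div\big((\mathbb I-\bfA_\varphi)\nabla\overline\bfu\big)-\Div\big((\bfB_\varphi-\mathbb I)\overline\pi\big)\\
&=:\overline\bfg+\mathscr E_1(\overline\bfu,\overline\pi),\\
\Div\overline\bfu&=\overline h+(\mathbb I-\bfB_\varphi)^\top:\nabla\overline\bfu=:\overline h+\mathscr E_2(\overline\bfu),
\end{align*}
complemented by $\overline\bfu|_{\mathcal B_1^+\cap\partial\mathbb H}=0$ and $\overline\bfu(-1,\cdot)=0$. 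Let $\mathscr S$ denote the solution operator of the corresponding flat Stokes problem with these boundary and initial data. By the classical half-space maximal regularity used already in \eqref{est:stokeshalf} (and, for the inhomogeneous divergence, Remark \ref{rem:div}), localised near the flat portion $\mathcal B_1^+\cap\partial\mathbb H$ by a cut-off and combined with interior maximal regularity away from it, $\mathscr S$ maps $L^r(\mathcal I_1;L^p)\times\big(L^r(\mathcal I_1;W^{1,p}\cap L^p_\perp)\cap W^{1,r}(\mathcal I_1;W^{-1,p})\big)$ boundedly into $\big(L^r(\mathcal I_1;W^{2,p})\cap W^{1,r}(\mathcal I_1;L^p)\big)\times L^r(\mathcal I_1;W^{1,p}_\perp)$ on $\mathcal B_1^+$. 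A solution of \eqref{eq:pertstokes} is then precisely a fixed point of $(\overline\bfu,\overline\pi)\mapsto\mathscr S\big(\overline\bfg+\mathscr E_1(\overline\bfu,\overline\pi),\,\overline h+\mathscr E_2(\overline\bfu)\big)$ in this space.

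\emph{Smallness of the perturbation.} The heart of the argument is that $\mathscr E_1,\mathscr E_2$ have small operator norm when $\delta$ is small. First, \eqref{eq:detJ} (with $N=N(L)$) gives $\|1-J_\varphi\|_{L^\infty}\lesssim\delta$, so $\|(1-J_\varphi)\partial_t\overline\bfu\|_{L^r_tL^p_x}\lesssim\delta\|\partial_t\overline\bfu\|_{L^r_tL^p_x}$. For the remaining terms I would repeat, essentially verbatim, the estimates carried out for $\mathscr S_6$ and $\mathscr S_7$ in the proof of Theorem \ref{thm:stokesunsteady}: using \eqref{J}, the composition rule \eqref{lem:9.4.1}, the definitions \eqref{eq:AB} and the multiplier bounds \eqref{eq:MS}--\eqref{eq:MS'} one gets
\begin{align*}
\|\mathbb I-\bfA_\varphi\|_{\mathcal M^{1,p}(\mathcal B_1^+)}+\|\mathbb I-\bfB_\varphi\|_{\mathcal M^{1,p}(\mathcal B_1^+)}\lesssim\|\mathcal T\varphi\|_{\mathcal M^{2,p}(\mathbb H)}\lesssim\|\varphi\|_{\mathcal M^{2-1/p,p}(\R^2)}\leq\delta,
\end{align*}
whence $\|\Div((\mathbb I-\bfA_\varphi)\nabla\overline\bfu)\|_{L^r_tL^p_x}\lesssim\delta\|\overline\bfu\|_{L^r_tW^{2,p}_x}$, $\|\Div((\bfB_\varphi-\mathbb I)\overline\pi)\|_{L^r_tL^p_x}\lesssim\delta\|\overline\pi\|_{L^r_tW^{1,p}_x}$ and $\|\mathscr E_2(\overline\bfu)\|_{L^r_tW^{1,p}_x}\lesssim\delta\|\overline\bfu\|_{L^r_tW^{2,p}_x}$. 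Since $\bfB_\varphi$ is independent of $t$, $\partial_t\mathscr E_2(\overline\bfu)=(\mathbb I-\bfB_\varphi)^\top:\nabla\partial_t\overline\bfu$ is bounded in $L^r_tW^{-1,p}_x$ by $\delta\|\partial_t\overline\bfu\|_{L^r_tL^p_x}$, using that $\mathbb I-\bfB_\varphi$ is in particular a bounded pointwise multiplier on $L^p$. Finally, to feed $\overline h+\mathscr E_2(\overline\bfu)$ into $\mathscr S$ it must have zero spatial mean on a.e.\ time slice; here the Piola identity $\Div\bfB_\varphi=0$ (a consequence of \eqref{eq:Phi}) is what is needed, since it recasts $\bfB_\varphi^\top:\nabla\overline\bfu$ as $\Div(\bfB_\varphi^\top\overline\bfu)$, so that compatibility is inherited from that of $\overline h$ (after subtracting a constant, which does not affect the right-hand side of \eqref{eq:pertstokes}); the compatibility at the initial time is automatic since $\overline h$ and $\overline\bfu$ both vanish there.

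\emph{Conclusion and main obstacle.} Assembling these bounds, the map $(\overline\bfu,\overline\pi)\mapsto\mathscr S(\overline\bfg+\mathscr E_1,\overline h+\mathscr E_2)$ is a contraction on the maximal regularity space provided $\delta$ is sufficiently small; its unique fixed point $(\overline\bfu,\overline\pi)$ is the asserted solution, and the claimed estimate results from writing the solution operator as $\mathscr S\circ(\mathrm{id}+\mathscr T)^{-1}$ with $\|\mathscr T\|<1$, the constant depending only on $p,r,\delta$ (and the fixed reference cylinder). Uniqueness in the full class follows from the same contraction, since the difference of two solutions is a fixed point of the homogeneous map. I expect the only real work to be the careful bookkeeping of the perturbation terms in the Sobolev multiplier calculus — in particular that $\bfA_\varphi-\mathbb I$ and $\bfB_\varphi-\mathbb I$ are small $\mathcal M^{1,p}$-multipliers controlled by $\|\varphi\|_{\mathcal M^{2-1/p,p}}$, which is where \eqref{est:ext}--\eqref{eq:MS} and the composition rule \eqref{lem:9.4.1} enter — together with handling the inhomogeneous divergence datum via the Piola identity and Remark \ref{rem:div}; the localisation near the curved part of $\partial\mathcal B_1^+$ is routine and does not interfere with the smallness. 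Unlike in the proof of Theorem \ref{thm:stokesunsteady}, no smallness of the time interval is required, because every error term already carries the small factor $\delta$.
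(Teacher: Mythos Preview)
Your proposal is correct and follows essentially the same approach as the paper: rewrite \eqref{eq:pertstokes} as the flat Stokes system plus perturbation terms, show these are small in the maximal regularity norm via the Sobolev-multiplier estimates $\|\mathbb I-\bfA_\varphi\|_{\mathcal M^{1,p}},\|\mathbb I-\bfB_\varphi\|_{\mathcal M^{1,p}},\|1-J_\varphi\|_{L^\infty}\lesssim\delta$ (using \eqref{eq:MS} and the arguments for $\mathscr S_6,\mathscr S_7$ in Theorem \ref{thm:stokesunsteady}), and absorb. The only structural difference is packaging: the paper derives an \emph{a priori} estimate for a presumed smooth solution and then justifies smoothness by mollifying $\varphi$ (invoking \cite[Lemma 3.1]{SeShSo} for the regularised problem), whereas you construct the solution directly by a contraction/Neumann series; both routes rest on the identical smallness bounds and are equivalent in substance. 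Two minor points: the paper simply quotes \cite{FaSo,So2} for the flat problem on $\mathcal Q_1^+$, so your localisation step is unnecessary; and for $\partial_t\mathscr E_2$ you need $\mathbb I-\bfB_\varphi\in\mathcal M^{1,p}$ (multiplier on $W^{-1,p}$), not merely boundedness on $L^p$, but this is exactly what you have already established.
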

\begin{proof}
Let us initially assume that $\overline\bfu$ and $\overline\pi$ are sufficiently smooth. We rewrite \eqref{eq:pertstokes} as
\begin{align}\label{eq:pertstokes'}
\begin{aligned}
\partial_t\overline\bfu+\nabla\overline{\pi}-\Delta\overline\bfu&=\overline \bfg+(1-J_\varphi)\partial_t\overline\bfu\\&+\Div\big((\mathbb I_{3\times 3}-\bfB_{\varphi})\overline{\pi}\big)+\Div\big((\bfA_{\varphi}-\mathbb I_{3\times 3})\nabla\overline\bfu\big),\\
\Div\overline{\bfu}=\big(\mathbb I_{3\times 3}-\bfB_{\varphi}\big)^\top&:\nabla\overline{\bfu}+\overline h,\quad\overline\bfu|_{\mathcal B^+_{1}\cap\partial\mathbb H}=0,\quad\overline{\bfu}(-1,\cdot)=0.
\end{aligned}
\end{align}
This is a classical Stokes system in $\mathcal Q_1^+$ with right-hand side
\begin{align*}
\overline \bfG:=\overline \bfg+(1-J_\varphi)\partial_t\overline\bfu+\Div\big((\mathbb I_{3\times 3}-\bfB_{\varphi})\overline{\pi}\big)+\Div\big((\bfA_{\varphi}-\mathbb I_{3\times 3})\nabla\overline\bfu\big)
\end{align*}
and given divergence
\begin{align*}
\overline H:=\big(\mathbb I_{3\times 3}-\bfB_{\varphi}\big)^\top&:\nabla\overline{\bfu}+\overline h.
\end{align*}
The known regularity theory (see \cite{FaSo} or \cite{So2}) yields
\begin{align}\label{eq:1101}\begin{aligned}
\|\partial_t\overline\bfu\|_{L^r(\mathcal I_1;L^p(\mathcal B^+_{1}))}&+\|\overline\bfu\|_{L^r(\mathcal I_1;W^{2,p}(\mathcal B^+_{1}))}
+\|\overline\pi\|_{L^r(\mathcal I_1;W^{1,p}(\mathcal B^+_{1}))}\\&\lesssim \|\overline \bfG\|_{L^r(\mathcal I_1;L^p(\mathcal B^+_{1}))}+\|\nabla \overline H\|_{L^r(\mathcal I_1;L^{p}(\mathcal B^+_{1}))}+\|\partial_t \overline H\|_{L^r(\mathcal I_1;W^{-1,p}(\mathcal B^+_{1}))}.
\end{aligned}
\end{align}
The goal is now to estimate the norms of $\overline \bfG$ and $\overline H$ employing the theory of Sobolev multipliers. First of all, we deduce from \eqref{est:ext} and \eqref{J} that
\begin{align*}
\|(1-J_\varphi)\partial_t\overline\bfu\|_{L^r(\mathcal I_1;L^p(\mathcal B^+_{1})}\lesssim \delta \|\partial_t\overline\bfu\|_{L^r(\mathcal I_1;L^p(\mathcal B^+_{1}))}
\end{align*}
using the assumption of a small Lipschitz constant.

Arguing similarly to the proof of Theorem \ref{thm:stokesunsteady} we have
\begin{align*}
\|\Div\big((\bfA_{\varphi}-\mathbb I_{3\times 3})\nabla\overline\bfu\big)\|_{L^p(\mathcal B_1^+)}&\lesssim \|\mathcal T\varphi\|_{\mathcal M(W^{2,p}(\mathcal B_1^+))}\|\overline{\bfu}\|_{W^{2,p}(\mathcal B_1^+)}
\end{align*}
%
as well as
\begin{align*}
\|\Div\big((\mathbb I_{3\times 3}-\bfB_{\varphi})\overline{\pi}\big)\|_{L^{p}(\mathcal B_1^+)}&\lesssim \|\bfB_\varphi-\mathbb I_{3\times 3}\|_{\mathcal M^{1,p}(\mathcal B_1^+)}\|\overline\pi\|_{W^{1,p}(\mathcal B_1^+)}\\
&\lesssim \|\mathcal T\varphi\|_{\mathcal M^{2,p}(\mathbb H)}\|\overline\pi\|_{W^{1,p}(\mathcal B_1^+)}.
\end{align*}
By \eqref{eq:MS} we have
\begin{align}\label{eq:MS'}
\|\mathcal T\varphi\|_{\mathcal M^{2,p}(\mathbb H)}\lesssim \|\varphi\|_{\mathcal M^{2-1/p,p}(\R^{n-1})}\leq \delta
\end{align}
using our assumption in the last step.
We conclude that
\begin{align*}
\|\bfG\|_{L^r(\mathcal I_1;L^p(\mathcal B^+_{1}))}&\lesssim \|\overline\bfg\|_{L^r(\mathcal I_1;L^p(\mathcal B^+_{1}))}+\delta\big(\|\partial_t\overline\bfu\|_{L^r(\mathcal I_1;L^p(\mathcal B^+_{1}))}+\|\overline\bfu\|_{L^r(\mathcal I_1;W^{2,p}(\mathcal B^+_{1}))}\big)\\
&+\delta\|\overline\pi\|_{L^r(\mathcal I_1;W^{1,p}(\mathcal B^+_{1}))}.
\end{align*}
By analogous arguments we obtain
\begin{align*}
\|\nabla\big((\mathbb I_{3\times 3}-\bfB_{\varphi})^\top:\nabla\overline{\bfu}\big)\|_{L^{p}(\mathcal B_1^+)}&\lesssim \|\bfB_\varphi-\mathbb I_{3\times 3}\|_{\mathcal M^{1,p}(\mathcal B_1^+)}\|\nabla\overline\bfu\|_{W^{1,p}(\mathcal B_1^+)}\\
&\lesssim \|\mathcal T\varphi\|_{\mathcal M^{2,p}(\mathbb H)}\|\overline\bfu\|_{W^{2,p}(\mathcal B_1^+)},\\
\|\partial_t\big((\mathbb I_{3\times 3}-\bfB_{\varphi})^\top:\nabla\overline{\bfu}\big)\|_{W^{-1,p}(\mathcal B_1^+)}&=\|(\mathbb I_{3\times 3}-\bfB_{\varphi})^\top:\nabla\partial_t\overline{\bfu}\|_{W^{-1,p}(\mathcal B_1^+)}\\
&\leq  \|\bfB_\varphi-\mathbb I_{3\times 3}\|_{\mathcal M^{1,p}(\mathcal B_1^+)}\|\partial_t\nabla\overline\bfu\|_{W^{-1,p}(\mathcal B_1^+)}\\
&\lesssim \|\mathcal T\varphi\|_{\mathcal M^{2,p}(\mathbb H)}\|\partial_t\overline\bfu\|_{L^{p}(\mathcal B_1^+)},
\end{align*}
such that
\begin{align*}
\|\nabla \overline H\|_{L^r(\mathcal I_1;L^{p}(\mathcal B^+_{1}))}+\|\partial_t \overline H\|_{L^r(\mathcal I_1;W^{-1,p}(\mathcal B^+_{1}))}&\lesssim \|\nabla \overline h\|_{L^r(\mathcal I_1;L^{p}(\mathcal B^+_{1}))}+\|\partial_t\overline h\|_{L^r(\mathcal I_1;W^{-1,p}(\mathcal B^+_{1}))}\\
&+\delta\big(\|\partial_t\overline\bfu\|_{L^r(\mathcal I_1;L^p(\mathcal B^+_{1}))}+\|\overline\bfu\|_{L^r(\mathcal I_1;W^{2,p}(\mathcal B^+_{1}))}\big).
\end{align*}
Combing the estimates for $\overline \bfG$ and $\overline H$
yields the claim for $\delta$ sufficiently small as a consequence of \eqref{eq:1101}.

Since smoothness of $(\overline\bfu,\overline\pi)$ is not a priori known one has to regularise the equations. This can be done
by mollifying the function $\varphi$ with a standard mollifier.
As shown in \cite[Lemma 4.3.3.]{MaSh} mollification does not expand the $\mathcal M^{s,p}(\R^{2})$-norm.
 For the regularised problem the results from \cite[Lemma 3.1]{SeShSo} apply and we obtain a sufficiently smooth solution. The previous estimates can then be performed uniformly with respect to the mollification parameter and the claimed result follows in the limit.
\end{proof}
As in \cite[Lemma 3.2]{SeShSo} we deduce the following Cacciopoli-type inequality from Lemma \ref{lem:31}.
\begin{lemma}\label{lem:32}
Let $p,q,r\in(1,\infty)$ with $q\geq p$.
 Suppose that $\varphi\in \mathcal M^{2-1/q,q}(\R^{2})(\delta)$ and that $\|\varphi\|_{W^{1,\infty}_y}\leq \delta$ for some sufficiently small $\delta$. Assume further that $\overline\bfg\in L^r(\mathcal I_1;L^{p}(\mathcal B_1^+))$ and that $\overline h=0$.
The solution $(\overline\bfu,\overline\pi)$ to \eqref{eq:pertstokes} satisfies
\begin{align}\label{eq:mainpara'}
\begin{aligned}
\|\partial_t\overline\bfu\|_{L^r(\mathcal I_{1/2};L^q(\mathcal B^+_{1/2}))}&+\|\overline\bfu\|_{L^r(\mathcal I_{1/2};W^{2,q}(\mathcal B^+_{1/2}))}
+\|\overline\pi\|_{L^r(\mathcal I_{1/2};W^{1,q}(\mathcal B^+_{1/2}))}\\&\lesssim \|\overline\bfg\|_{L^r(\mathcal I_1;L^q(\mathcal B^+_{1}))}+\|\nabla \overline\bfu\|_{L^r(\mathcal I_1;L^{p}(\mathcal B^+_{1}))}+\|\overline\pi-(\overline\pi)_{\mathcal B_1^+}\|_{L^r(\mathcal I_1;L^{p}(\mathcal B^+_{1}))},
\end{aligned}
\end{align}
where the hidden constant only depends on $p,q,r$ and $\delta$.
\end{lemma}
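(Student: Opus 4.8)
The plan is to deduce the Caccioppoli-type estimate \eqref{eq:mainpara'} from the maximal regularity result of Lemma \ref{lem:31} by the standard localisation trick: multiply the solution by a cut-off in space-time and absorb the lower-order commutator terms. Concretely, fix a radius $\varrho\in[1/2,1)$ and a cut-off $\eta\in C_c^\infty(\mathcal Q_1)$ with $\eta\equiv 1$ on $\mathcal Q_{1/2}$, $0\le\eta\le1$, and $\mathrm{spt}\,\eta\subset\mathcal Q_{3/4}$ (or more generally a nested family $\eta_\varrho$ adapted to an iteration). The pair $(\eta\overline\bfu,\eta(\overline\pi-(\overline\pi)_{\mathcal B_1^+}))$ solves a perturbed Stokes system of the form \eqref{eq:pertstokes} with modified right-hand side
\begin{align*}
\overline\bfg_\eta&=\eta\,\overline\bfg+J_\varphi(\partial_t\eta)\overline\bfu-\bfA_\varphi\nabla\overline\bfu\,\nabla\eta-\Div\!\big(\bfA_\varphi(\overline\bfu\otimes\nabla\eta)\big)+\bfB_\varphi(\overline\pi-(\overline\pi)_{\mathcal B_1^+})\nabla\eta,\\
\overline h_\eta&=\bfB_\varphi^\top\colon(\nabla\eta\otimes\overline\bfu)=\nabla\eta\cdot(\bfB_\varphi\overline\bfu),
\end{align*}
where I have used $\bfB_\varphi^\top\colon\nabla\overline\bfu=0$ and $\Div\bfB_\varphi=0$ (Piola) to simplify. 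Since $\eta$ vanishes near $\partial\mathcal B_1^+$ the homogeneous boundary and initial conditions are inherited, and $\overline h_\eta(-1,\cdot)=0$, $(\overline h_\eta)_{\mathcal B_1^+}=\int\nabla\eta\cdot(\bfB_\varphi\overline\bfu)=-\int\eta\,\Div(\bfB_\varphi\overline\bfu)=0$ so $\overline h_\eta$ has zero mean as required.

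Next I would apply Lemma \ref{lem:31} with exponent $q$ to $(\eta\overline\bfu,\eta(\overline\pi-(\overline\pi)_{\mathcal B_1^+}))$. This bounds the left-hand side of \eqref{eq:mainpara'} (with $\mathcal Q_{1/2}$ replaced by $\mathcal Q_\varrho$ via the support of $\eta$) by
\begin{align*}
\|\overline\bfg_\eta\|_{L^r_tL^q_x}+\|\nabla\overline h_\eta\|_{L^r_tL^q_x}+\|\partial_t\overline h_\eta\|_{L^r_tW^{-1,q}_x}.
\end{align*}
The terms involving $\overline\bfg$ are directly controlled by $\|\overline\bfg\|_{L^r_tL^q_x}$. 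The commutator terms all carry at least one derivative of $\eta$ but only one derivative on $\overline\bfu$ or zero on $\overline\pi$; using the Sobolev multiplier bounds $\|\bfA_\varphi-\mathbb I\|_{\mathcal M^{1,q}}+\|\bfB_\varphi-\mathbb I\|_{\mathcal M^{1,q}}\lesssim\|\mathcal T\varphi\|_{\mathcal M^{2,q}(\mathbb H)}\lesssim\|\varphi\|_{\mathcal M^{2-1/q,q}}\le\delta$ (as in the proof of Lemma \ref{lem:31}) together with $\|J_\varphi\|_{L^\infty}\lesssim1$, these are bounded by $C(\eta)\big(\|\overline\bfu\|_{L^r_tW^{1,q}_x(\mathcal Q_{3/4})}+\|\overline\pi-(\overline\pi)_{\mathcal B_1^+}\|_{L^r_tL^q_x(\mathcal Q_{3/4})}\big)$. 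For $\partial_t\overline h_\eta$ I move the time-derivative onto $\overline\bfu$ and pair against a $W^{1,q'}$-test function, so that $\|\partial_t\overline h_\eta\|_{W^{-1,q}_x}\lesssim\|\nabla\eta\cdot(\bfB_\varphi\partial_t\overline\bfu)\|_{W^{-1,q}_x}\lesssim\|\partial_t\overline\bfu\|_{W^{-1,q}_x}$; but $W^{-1,q}$ is weaker than the full norm, so I instead use the equation \eqref{eq:pertstokes} itself to rewrite $J_\varphi\partial_t\overline\bfu=\Div(\bfA_\varphi\nabla\overline\bfu)-\Div(\bfB_\varphi\overline\pi)+\overline\bfg$, giving $\|\partial_t\overline\bfu\|_{W^{-1,q}_x}\lesssim\|\nabla\overline\bfu\|_{L^q_x}+\|\overline\pi\|_{L^q_x}+\|\overline\bfg\|_{L^q_x}+\delta\|\partial_t\overline\bfu\|_{L^q_x}$ — the last term is then absorbed. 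Passing from $\overline\pi$ to $\overline\pi-(\overline\pi)_{\mathcal B_1^+}$ is harmless since only $\nabla\overline\pi$ and differences enter.

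At this point I have the bound with the critical exponent $q$ on the right-hand side in the zeroth- and first-order terms, which is not yet \eqref{eq:mainpara'}: there the right-hand side has $\nabla\overline\bfu$ and $\overline\pi-(\overline\pi)_{\mathcal B_1^+}$ measured only in $L^p$ with $p\le q$. The main obstacle is therefore the \emph{gain of integrability} from $p$ to $q$ in the lower-order terms. This is handled, as in \cite[Lemma 3.2]{SeShSo}, by a standard interpolation-plus-iteration argument on a nested family of radii $1/2\le\varrho<\varrho'\le1$: one uses the interpolation inequality $\|\nabla\overline\bfu\|_{L^q_x(\mathcal B_\varrho^+)}\le\epsilon\|\overline\bfu\|_{W^{2,q}_x(\mathcal B_{\varrho'}^+)}+C_\epsilon\|\nabla\overline\bfu\|_{L^p_x(\mathcal B_{\varrho'}^+)}$ (valid since $p\le q$, after controlling the zero-order part by the mean-zero reduction / Poincaré), and the analogous statement for $\overline\pi-(\overline\pi)_{\mathcal B_1^+}$ via $\|\overline\pi-(\overline\pi)_{\mathcal B_1^+}\|_{L^q_x}\lesssim\|\nabla\overline\pi\|_{L^q_x}^{\theta}\|\overline\pi-(\overline\pi)_{\mathcal B_1^+}\|_{L^p_x}^{1-\theta}+\|\overline\pi-(\overline\pi)_{\mathcal B_1^+}\|_{L^p_x}$ from Gagliardo–Nirenberg on $\mathcal B_{\varrho'}^+$. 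Choosing $\epsilon$ small so the leading terms are absorbed into the left-hand side at radius $\varrho$, and then summing the resulting inequality over a geometric sequence of radii converging to $1/2$ and $1$ (the iteration lemma of Giaquinta–Widman type), eliminates the $C_\epsilon$-weighted $L^q$-terms and leaves precisely the right-hand side of \eqref{eq:mainpara'}. Uniqueness is already contained in Lemma \ref{lem:31}, and the a priori smoothness needed to justify the manipulations follows from the same mollification of $\varphi$ as in the proof of Lemma \ref{lem:31}, the estimates being stable in the limit. I expect the bookkeeping in the iteration — keeping track of which terms are absorbable at each scale — to be the only delicate point; everything else is a routine consequence of Lemma \ref{lem:31} and the Sobolev multiplier calculus already set up.
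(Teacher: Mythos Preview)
Your localisation and the derivation of the system for $(\eta\overline\bfu,\eta(\overline\pi-(\overline\pi)_{\mathcal B_1^+}))$ are the same as the paper's, and your treatment of the commutator terms via the multiplier bounds on $\bfA_\varphi,\bfB_\varphi$ and of $\partial_t\overline h_\eta$ via the equation is likewise essentially what the paper does. The difference lies in how you bridge the integrability gap from $p$ to $q$. The paper does not use an interpolation--iteration scheme; instead it proceeds in two stages: first it proves the estimate in the special case $p=q$ (where no gap exists and the localised Lemma \ref{lem:31} closes directly), which already yields $\overline\bfu\in L^r_tW^{2,p}_x$ and $\overline\pi\in L^r_tW^{1,p}_x$ on $\mathcal Q^+_{3/4}$; then for $q\in(p,3p/(3-p)]$ it takes a second cut--off supported in $\mathcal Q_{3/4}$ and uses the Sobolev embedding $W^{1,p}_x\hookrightarrow L^q_x$ to bound the commutator terms in $L^q$ by the $W^{2,p}$/$W^{1,p}$ quantities already controlled in the first step. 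A further iteration of this Sobolev bootstrap reaches any $q<\infty$.

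Your Ehrling/Gagliardo--Nirenberg interpolation combined with a Giaquinta--Widman hole--filling iteration is a legitimate alternative and is standard in elliptic Caccioppoli arguments; it has the advantage of being a single pass. The cost is the bookkeeping you flag: one must track the blow--up of the cut--off constants $C(\eta)\sim(\varrho'-\varrho)^{-k}$ and couple the choice of $\epsilon$ to the radii so that the iteration lemma applies, and one needs the a~priori finiteness of the $W^{2,q}$ quantity being iterated (handled, as you note, via mollification of $\varphi$). One small imprecision: the term $\delta\|\partial_t\overline\bfu\|_{L^q_x}$ you obtain when estimating $\|\partial_t\overline\bfu\|_{W^{-1,q}_x}$ lives at the larger radius and cannot be absorbed directly into the left--hand side at the smaller one; either avoid it entirely by using that $J_\varphi^{-1}\in\mathcal M^{1,q}$ (as the paper implicitly does), or carry it through the iteration as an additional small multiple of the top--order quantity. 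The paper's Sobolev bootstrap sidesteps all of this: once the $p=q$ case is in hand, each subsequent step is a direct application of Lemma \ref{lem:31} with right--hand side already bounded, and no absorption argument is needed.
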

\begin{proof}
Let us initially suppose that $p=q$.
We consider a cut-off function $\zeta\in C^\infty_c(\mathcal Q_1)$ with $\zeta=1$ in $\mathcal Q_{3/4}$. The functions $\overline\bfv=\zeta\overline\bfu$ and $\overline{\mathfrak q}=\zeta\overline\pi$ solve the system
\begin{align}\label{eq:pertstokes'}
\begin{aligned}
J_{\varphi}\partial_t\overline\bfv+\Div\big(\bfB_{\varphi}\overline{\pi}\big)-\Div\big(\bfA_{\varphi}\nabla\overline\bfv\big)&=\overline \bfG_\zeta,\\
\bfB_{\varphi}:\nabla\overline{\bfv}=\overline h_\zeta,\quad\overline\bfv|_{\mathcal B^+_{1}\cap\partial\mathbb H}&=0,\quad\overline{\bfv}(-1,\cdot)=0,
\end{aligned}
\end{align}
where $\overline\bfG_\zeta$ and $\overline h_\zeta$ are given by
\begin{align*}
\overline\bfG_\zeta&=\zeta\overline\bfg-J_\varphi\partial_t\zeta\overline\bfu-2\bfA_\varphi\nabla\overline\bfu\nabla\zeta
+(\overline\pi-(\overline\pi)_{\mathcal B_1^+})\bfB_\varphi\nabla\zeta-\overline\bfu\Div\big(\bfA_{\varphi}\nabla\zeta),\quad
\overline h_\zeta=\overline\bfu\cdot\bfB_\varphi\nabla\zeta.
\end{align*}
We apply Lemma \ref{lem:31} to \eqref{eq:pertstokes'} and obtain
\begin{align*}
\|\partial_t\overline\bfv\|_{L^r(\mathcal I_1;L^p(\mathcal B^+_{1}))}&+\|\overline\bfv\|_{L^r(\mathcal I_1;W^{2,p}(\mathcal B^+_{1}))}
+\|\overline{\mathfrak q}\|_{L^r(\mathcal I_1;W^{1,p}(\mathcal B^+_{1}))}\\&\lesssim \|\overline\bfG_\zeta\|_{L^r(\mathcal I_1;L^p(\mathcal B^+_{1}))}+\|\nabla \overline h_\zeta\|_{L^r(\mathcal I_1;L^{p}(\mathcal B^+_{1}))}+\|\partial_t \overline h_\zeta\|_{L^r(\mathcal I_1;W^{-1,p}(\mathcal B^+_{1}))}.
\end{align*}
We clearly have
\begin{align*}
\|\overline\bfG_\zeta\|_{L^r(\mathcal I_1;L^p(\mathcal B^+_{1}))}&\lesssim \|\overline\bfg\|_{L^r(\mathcal I_1;L^p(\mathcal B^+_{1}))}+\|\nabla \overline\bfu\|_{L^r(\mathcal I_1;L^{p}(\mathcal B^+_{1}))}+\|\overline\pi-(\overline\pi)_{\mathcal B_1^+}\|_{L^r(\mathcal I_1;L^{p}(\mathcal B^+_{1}))}\\&+ \|\overline\bfu\Div\big(\bfA_{\varphi}\nabla\zeta)\|_{L^r(\mathcal I_1;L^{p}(\mathcal B^+_{1}))}
\end{align*}
since $\varphi$ is Lipschitz by assumption (and so are $\bfPhi$ and $\bfPsi$, cf. \eqref{est:ext} and \eqref{eq:detJ}). Note that we also used Poincar\'e's inequality recalling that $\overline\bfu|_{\mathcal B^+_{1}\cap\partial\mathbb H}=0$. For the last term in the above we have
\begin{align*}
\|\overline\bfu\Div\big(\bfA_{\varphi}\nabla\zeta)\|_{L^r(\mathcal I_1;L^{p}(\mathcal B^+_{1}))}&\lesssim \|\Div\big(\overline\bfu\otimes\bfA_{\varphi}\nabla\zeta)\|_{L^r(\mathcal I_1;L^{p}(\mathcal B^+_{1}))}+\|\nabla\overline\bfu\,\bfA_{\varphi}\nabla\zeta\|_{L^r(\mathcal I_1;L^{p}(\mathcal B^+_{1}))}\\
&\lesssim \|\overline\bfu\otimes\bfA_{\varphi}\nabla\zeta\|_{L^r(\mathcal I_1;W^{1,p}(\mathcal B^+_{1}))}+\|\nabla\overline\bfu\|_{L^r(\mathcal I_1;L^{p}(\mathcal B^+_{1}))}\\
&\lesssim \|\bfA_\varphi\|_{\mathcal M^{1,p}(\mathcal B^+_1)}\|\overline\bfu\otimes\nabla\zeta\|_{L^r(\mathcal I_1;W^{1,p}(\mathcal B^+_{1}))}+\|\nabla\overline\bfu\|_{L^r(\mathcal I_1;L^{p}(\mathcal B^+_{1}))}\\
&\lesssim \|\nabla\overline\bfu\|_{L^r(\mathcal I_1;L^{p}(\mathcal B^+_{1}))}.
\end{align*}
Note that we used in the last step, in addition to the Lipschitz continuity of $\bfPhi$ and $\bfPsi$, that
$\bfPhi$ and $\bfPsi$ belong to the correct mulitplier space by \eqref{eq:SoMo} and \eqref{eq:SMPhiPsi} using
the assumption $\varphi\in \mathcal M^{2-1/p,p}(\R^{2})(\delta)$. Similarly, we have
\begin{align*}
\|\overline h\|_{L^r(\mathcal I_1;W^{1,p}(\mathcal B^+_{1}))}
&\lesssim \|\overline\bfu\cdot\bfB_{\varphi}\nabla\zeta\|_{L^r(\mathcal I_1;W^{1,p}(\mathcal B^+_{1}))}\\
&\lesssim \|\bfB_\zeta\|_{\mathcal M^{1,p}(\mathcal B^+_1)}\|\overline\bfu\otimes\nabla\zeta\|_{L^r(\mathcal I_1;W^{1,p}(\mathcal B^+_{1}))}\\
&\lesssim \|\nabla\overline\bfu\|_{L^r(\mathcal I_1;L^{p}(\mathcal B^+_{1}))}.
\end{align*}
Finally,
\begin{align*}
\|\partial_t \overline h\|_{L^r(\mathcal I_1;W^{-1,p}(\mathcal B^+_{1}))}&\lesssim \|\partial_t \overline\bfu\|_{L^r(\mathcal I_1;W^{-1,p}(\mathcal B^+_{1}))}+\|\overline\bfu\|_{L^r(\mathcal I_1;W^{-1,p}(\mathcal B^+_{1}))}.
\end{align*}
Using equation \eqref{eq:pertstokes'} together with strict positivity of $J_\varphi$ and $J_\varphi\in \mathcal M^{1,p}(\mathcal B^+_1)$ we have
\begin{align*}
 \|\partial_t \overline\bfu\|_{L^r(\mathcal I_1;W^{-1,p}(\mathcal B^+_{1}))}&\lesssim \|\overline\bfG\|_{L^r(\mathcal I_1;W^{-1,p}(\mathcal B^+_{1}))}+\|\Div\big(\bfB_{\varphi}\overline{\pi}\big)\|_{L^r(\mathcal I_1;W^{-1,p}(\mathcal B^+_{1}))}\\&+\|\Div\big(\bfA_{\varphi}\nabla\overline\bfu\big)\|_{L^r(\mathcal I_1;W^{-1,p}(\mathcal B^+_{1}))}\\
 &\lesssim \|\overline\bfG\|_{L^r(\mathcal I_1;L^p(\mathcal B^+_{1}))}+\|\nabla \overline\bfu\|_{L^r(\mathcal I_1;L^{p}(\mathcal B^+_{1}))}+\|\overline\pi-(\overline\pi)_{\mathcal B_1^+}\|_{L^r(\mathcal I_1;L^{p}(\mathcal B^+_{1}))}.
\end{align*}
Note that we used again boundedness of $\bfA_\varphi$ and $\bfB_\varphi$.
Combining everything, we have proved \eqref{eq:mainpara'} for the case $p=q$ (even with norms over $\mathcal Q^+_{3/4}$ on the left-hand side).

Let us now consider the case $q\in(p,\tfrac{3p}{3-p}]$ for the case $p<3$ and $q\in(p,\infty)$ arbitrary for $p\geq 3$. We make the same definitions of
$\overline\bfv$ and $\overline{\mathfrak q}$ as above but consider a cut-off function $\zeta\in C^\infty_c(\mathcal{Q}_{3/4})$ with $\zeta=1$ in $\mathcal Q_{1/2}$. By the choice of $q$ and Sobolev's embedding $W^{1,p}(\mathcal B_{3/4}^+)\hookrightarrow L^{q}(\mathcal B_{3/4}^+)$ we have
\begin{align*}
\|\overline\bfG_\zeta\|_{L^r(\mathcal I_{3/4};L^q(\mathcal B^+_{3/4}))}&\lesssim \|\overline\bfg\|_{L^r(\mathcal I_{3/4};L^q(\mathcal B^+_{3/4}))}+\|\nabla \overline\bfu\|_{L^r(\mathcal I_{3/4};L^{q}(\mathcal B^+_{3/4}))}\\&+\|\overline\pi-(\overline\pi)_{\mathcal B_{3/4}^+}\|_{L^r(\mathcal I_{3/4};L^{q}(\mathcal B^+_{3/4}))}\\
&\lesssim \|\overline\bfg\|_{L^r(\mathcal I_{3/4};L^q(\mathcal B^+_{3/4}))}+\|\nabla^2 \overline\bfu\|_{L^r(\mathcal I_{3/4};L^{p}(\mathcal B^+_{3/4}))}+\|\nabla\overline\pi\|_{L^r(\mathcal I_{3/4};L^{p}(\mathcal B^+_{3/4}))},
\end{align*}
arguing as for the $L^p$-estimates above (but using $\varphi\in \mathcal M^{2-1/q,q}(\R^{2})(\delta)$).
Similarly, we obtain
\begin{align*}
\|\nabla \overline h_\zeta\|_{L^r(\mathcal I_{3/4};L^{q}(\mathcal B^+_{3/4}))}&\lesssim\|\nabla^2\overline\bfu\|_{L^r(\mathcal I_{3/4};L^{p}(\mathcal B^+_{3/4}))},\\
\|\partial_t \overline h_\zeta\|_{L^r(\mathcal I_{3/4};W^{-1,q}(\mathcal B^+_{3/4}))}&\lesssim\|\partial_t\overline\bfu\|_{L^r(\mathcal I_{3/4};L^{p}(\mathcal B^+_{3/4}))}+\|\overline\bfu\|_{L^r(\mathcal I_{3/4};L^{p}(\mathcal B^+_{3/4}))}.
\end{align*}
Applying now the result from the first step as well as Lemma \ref{lem:31} (but with the stronger assumption $\varphi\in \mathcal M^{2-1/q,q}(\R^{2})(\delta)$,  see \cite[Proposition 3.5.3.]{MaSh} for inclusions of the spaces $\mathcal M^{s,p}$) yields again the claim. Note that we still have the restriction $q\leq \tfrac{3p}{3-p}$ if $p<3$.
Iterating this argument based on Sobolev's embedding and consider appropriate cut-off functions clearly allows any choice of exponent $q<\infty$.
\end{proof}

We will also need interior estimates for points close to the boundary which is why we consider 
in analogy to \eqref{eq:pertstokes} the system
 \begin{align}\label{eq:pertstokesint}
\begin{aligned}
J_{\varphi}\partial_t\overline\bfu+\Div\big(\bfB_{\varphi}\overline{\pi}\big)-\Div\big(\bfA_{\varphi}\nabla\overline\bfu\big)&=\bfg,\\
\bfB_{\varphi}^\top:\nabla\overline{\bfu}=h,\quad\overline\bfu|_{\mathcal B_{1}}&=0,\quad\overline{\bfu}(-1,\cdot)=0,
\end{aligned}
\end{align}
in $\mathcal Q_1$. Arguing exactly as for Lemmas \ref{lem:31} and \ref{lem:32} we obtain the following results.
\begin{lemma}\label{lem:31int}
Let $p,r\in(1,\infty)$.
 Suppose that $\varphi\in \mathcal M^{2-1/p,p}(\R^{2})(\delta)$ and that $\|\varphi\|_{W^{1,\infty}_y}\leq \delta$ for some sufficiently small $\delta$. Assume further that $\overline \bfg\in L^r(\mathcal I_1;L^{p}(\mathcal B_1))$ and $\overline h\in L^r(\mathcal I_1;W^{1,p}(\mathcal B_1))$ with $\partial_t \overline h\in L^r(\mathcal I_1;W^{-1,p}(\mathcal B_1))$. Then there is a unique solution $(\overline\bfu,\overline\pi)$ to \eqref{eq:pertstokesint} which satisfies
\begin{align}\label{eq:mainparaint}
\begin{aligned}
\|\partial_t\overline\bfu\|_{L^r(\mathcal I_1;L^p(\mathcal B_{1}))}&+\|\overline\bfu\|_{L^r(\mathcal I_1;W^{2,p}(\mathcal B_{1}))}
+\|\overline\pi\|_{L^r(\mathcal I_1;W^{1,p}(\mathcal B_{1}))}\\&\lesssim \|\overline\bfg\|_{L^r(\mathcal I_1;L^p(\mathcal B_{1}))}+\|\nabla \overline h\|_{L^r(\mathcal I_1;L^{p}(\mathcal B_{1}))}+\|\partial_t \overline h\|_{L^r(\mathcal I_1;W^{-1,p}(\mathcal B_{1}))},
\end{aligned}
\end{align}
where the hidden constant only depends on $p,r$ and $\delta$.
\end{lemma}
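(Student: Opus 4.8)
The plan is to transcribe the proof of Lemma~\ref{lem:31}, replacing the half-ball $\mathcal B^+_1$ by the full ball $\mathcal B_1$ and the mixed boundary condition by homogeneous Dirichlet data on $\partial\mathcal B_1$, and to invoke the interior version of the classical maximal $L^r_tL^p_x$-theory for the constant-coefficient Stokes system. As before, one first argues under the provisional assumption that $(\overline\bfu,\overline\pi)$ is smooth; this is justified a posteriori by mollifying $\varphi$ (which does not enlarge the $\mathcal M^{2-1/p,p}(\R^2)$-norm, cf.~\cite[Lemma~4.3.3]{MaSh}), applying the interior analogue of \cite[Lemma~3.1]{SeShSo} to the regularised coefficients, and passing to the limit in the uniform estimates derived below. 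Uniqueness then follows by applying those estimates to the difference of two solutions.

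The key step is to rewrite \eqref{eq:pertstokesint} as the constant-coefficient Stokes system
\begin{align*}
\partial_t\overline\bfu+\nabla\overline\pi-\Delta\overline\bfu=\overline\bfG,\qquad\Div\overline\bfu=\overline H\qquad\text{in }\mathcal Q_1,
\end{align*}
with homogeneous boundary and initial data, where
\begin{align*}
\overline\bfG&:=\overline\bfg+(1-J_\varphi)\partial_t\overline\bfu+\Div\big((\mathbb I_{3\times 3}-\bfB_\varphi)\overline\pi\big)+\Div\big((\bfA_\varphi-\mathbb I_{3\times 3})\nabla\overline\bfu\big),\\
\overline H&:=(\mathbb I_{3\times 3}-\bfB_\varphi)^\top:\nabla\overline\bfu+\overline h.
\end{align*}
The known theory (see~\cite{FaSo,So2}) gives
\begin{align*}
\|\partial_t\overline\bfu\|_{L^r(\mathcal I_1;L^p(\mathcal B_1))}&+\|\overline\bfu\|_{L^r(\mathcal I_1;W^{2,p}(\mathcal B_1))}+\|\overline\pi\|_{L^r(\mathcal I_1;W^{1,p}(\mathcal B_1))}\\&\lesssim\|\overline\bfG\|_{L^r(\mathcal I_1;L^p(\mathcal B_1))}+\|\nabla\overline H\|_{L^r(\mathcal I_1;L^{p}(\mathcal B_1))}+\|\partial_t\overline H\|_{L^r(\mathcal I_1;W^{-1,p}(\mathcal B_1))},
\end{align*}
and one then estimates the right-hand side precisely as in the proof of Lemma~\ref{lem:31}: the term $(1-J_\varphi)\partial_t\overline\bfu$ is $\lesssim\delta\,\|\partial_t\overline\bfu\|_{L^r_tL^p_x}$ by~\eqref{J}--\eqref{eq:detJ} and the smallness of the Lipschitz constant; the divergence terms carrying $\bfA_\varphi-\mathbb I_{3\times 3}$ and $\bfB_\varphi-\mathbb I_{3\times 3}$, together with the two contributions of $(\mathbb I_{3\times 3}-\bfB_\varphi)^\top:\nabla\overline\bfu$ to $\nabla\overline H$ and to $\partial_t\overline H$, are bounded through the Sobolev-multiplier calculus and the composition rule~\eqref{lem:9.4.1} by $\|\mathcal T\varphi\|_{\mathcal M^{2,p}(\mathbb H)}\big(\|\overline\bfu\|_{L^r_tW^{2,p}_x}+\|\partial_t\overline\bfu\|_{L^r_tL^p_x}+\|\overline\pi\|_{L^r_tW^{1,p}_x}\big)$, and $\|\mathcal T\varphi\|_{\mathcal M^{2,p}(\mathbb H)}\lesssim\|\varphi\|_{\mathcal M^{2-1/p,p}(\R^2)}\leq\delta$ by~\eqref{eq:MS}. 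Choosing $\delta$ small enough absorbs all $\delta$-terms into the left-hand side, which yields~\eqref{eq:mainparaint}.

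I do not expect a genuine obstacle here, since this is essentially a verbatim copy of the half-space argument; there are two points to check. The first is that none of the multiplier estimates exploited the half-space structure; they do not, as $\bfA_\varphi,\bfB_\varphi$ are the same coefficients built from the Lipschitz extension $\bfPhi$ and its inverse $\bfPsi\in\mathcal M^{2,p}(\mathbb H)$ (cf.~\eqref{eq:SMPhiPsi}), and every estimate involved is local. The second is that the constant-coefficient Stokes estimate on a ball with homogeneous Dirichlet data and prescribed divergence (as well as prescribed $\partial_t$ of the divergence) is available; this is classical. The interior counterpart of the Caccioppoli-type Lemma~\ref{lem:32} follows along the same lines, testing~\eqref{eq:pertstokesint} against a spatial cut-off, applying Lemma~\ref{lem:31int}, and bootstrapping the exponent $q$ via Sobolev's embedding.
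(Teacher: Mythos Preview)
Your proposal is correct and follows precisely the route the paper indicates: the paper does not give a separate proof of Lemma~\ref{lem:31int} but simply states that one argues exactly as for Lemma~\ref{lem:31}, and your write-up carries out that transcription faithfully, including the mollification step and the absorption of the $\delta$-terms via the Sobolev-multiplier bounds.
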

\begin{lemma}\label{lem:32int}
Let $p,q,r\in(1,\infty)$ with $q\geq p$.
 Suppose that $\varphi\in \mathcal M^{2-1/q,q}(\R^{2})(\delta)$ and that $\|\varphi\|_{W^{1,\infty}_y}\leq \delta$ for some sufficiently small $\delta$. Assume further that $\overline\bfg\in L^r(\mathcal I_1;L^{p}(\mathcal B_1))$ and that $\overline h=0$.
The solution $(\overline\bfu,\overline\pi)$ to \eqref{eq:pertstokesint} satisfies
\begin{align}\label{eq:mainpara'int}
\begin{aligned}
\|\partial_t\overline\bfu\|_{L^r(\mathcal I_{1/2};L^q(\mathcal B_{1/2}))}&+\|\overline\bfu\|_{L^r(\mathcal I_{1/2};W^{2,q}(\mathcal B_{1/2}))}
+\|\overline\pi\|_{L^r(\mathcal I_{1/2};W^{1,q}(\mathcal B_{1/2}))}\\&\lesssim \|\overline \bfg\|_{L^r(\mathcal I_1;L^q(\mathcal B_{1}))}+\|\nabla \overline\bfu\|_{L^r(\mathcal I_1;L^{p}(\mathcal B_{1}))}+\|\overline\pi-(\overline\pi)_{\mathcal B_1}\|_{L^r(\mathcal I_1;L^{p}(\mathcal B_{1}))},
\end{aligned}
\end{align}
where the hidden constant only depends on $p,q,r$ and $\delta$.
\end{lemma}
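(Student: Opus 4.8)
The plan is to run, almost verbatim, the argument proving Lemma~\ref{lem:32}, with the half-balls $\mathcal B^+_\rho$ replaced by the balls $\mathcal B_\rho$ and the mixed boundary condition replaced by $\overline\bfu|_{\partial\mathcal B_1}=0$; the only input that must be swapped out is the underlying linear theory, where in place of the Stokes estimate on the half-space one uses the classical $L^r_tL^p_x$ maximal regularity for the Dirichlet--Stokes problem on a ball with prescribed divergence (\cite{FaSo,So2}). This first yields Lemma~\ref{lem:31int} exactly as Lemma~\ref{lem:31}: one rewrites \eqref{eq:pertstokesint} as the constant-coefficient Stokes system with right-hand side $\overline\bfG=\overline\bfg+(1-J_\varphi)\partial_t\overline\bfu+\Div\big((\mathbb I_{3\times3}-\bfB_\varphi)\overline\pi\big)+\Div\big((\bfA_\varphi-\mathbb I_{3\times3})\nabla\overline\bfu\big)$ and divergence $\overline H=(\mathbb I_{3\times3}-\bfB_\varphi)^\top\!:\nabla\overline\bfu+\overline h$, bounds the commutator terms through $\|\bfA_\varphi-\mathbb I_{3\times3}\|_{\mathcal M^{1,p}},\|\bfB_\varphi-\mathbb I_{3\times3}\|_{\mathcal M^{1,p}}\lesssim\|\mathcal T\varphi\|_{\mathcal M^{2,p}(\mathbb H)}\lesssim\|\varphi\|_{\mathcal M^{2-1/p,p}(\R^2)}\le\delta$ (using \eqref{est:ext}, \eqref{J} and \eqref{eq:MS}), and absorbs them into the left-hand side for $\delta$ small, smoothness being obtained by mollifying $\varphi$ as before.

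For Lemma~\ref{lem:32int} I first treat $p=q$. Fix $\zeta\in C^\infty_c(\mathcal Q_1)$ with $\zeta\equiv1$ on $\mathcal Q_{3/4}$ and set $\overline\bfv=\zeta\overline\bfu$, $\overline{\mathfrak q}=\zeta\overline\pi$; a direct computation shows $(\overline\bfv,\overline{\mathfrak q})$ solves a system of the form \eqref{eq:pertstokesint} with data $\overline\bfG_\zeta=\zeta\overline\bfg-J_\varphi(\partial_t\zeta)\overline\bfu-2\bfA_\varphi\nabla\overline\bfu\,\nabla\zeta+(\overline\pi-(\overline\pi)_{\mathcal B_1})\bfB_\varphi\nabla\zeta-\overline\bfu\Div(\bfA_\varphi\nabla\zeta)$ and $\overline h_\zeta=\overline\bfu\cdot\bfB_\varphi\nabla\zeta$. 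Applying Lemma~\ref{lem:31int} and estimating precisely as in the proof of Lemma~\ref{lem:32}: the terms with $\overline\bfg$, $\nabla\overline\bfu$ and $\overline\pi-(\overline\pi)_{\mathcal B_1}$ are kept as they stand; the term $\overline\bfu\Div(\bfA_\varphi\nabla\zeta)$ is written as $\Div(\overline\bfu\otimes\bfA_\varphi\nabla\zeta)-\nabla\overline\bfu\,\bfA_\varphi\nabla\zeta$ and bounded by $\|\nabla\overline\bfu\|_{L^r_tL^p_x}$ using $\bfA_\varphi\in\mathcal M^{1,p}(\mathcal B_1)$; the $\overline h_\zeta$-contributions to $\|\nabla\overline h_\zeta\|_{L^r_tL^p_x}$ and $\|\partial_t\overline h_\zeta\|_{L^r_tW^{-1,p}_x}$ are handled the same way, the latter after controlling $\|\partial_t\overline\bfu\|_{L^r_tW^{-1,p}_x}$ directly from \eqref{eq:pertstokesint} and the boundedness of $J_\varphi,\bfA_\varphi,\bfB_\varphi$; and Poincar\'e's inequality is available since $\overline\bfu$ vanishes on all of $\partial\mathcal B_1$. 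This gives \eqref{eq:mainpara'int} with $q=p$, in fact with norms over $\mathcal Q_{3/4}$ on the left.

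The passage to $q>p$ is the same Sobolev bootstrap as in Lemma~\ref{lem:32}: for $p<3$ and $q\le\tfrac{3p}{3-p}$ one repeats the above with $\zeta\in C^\infty_c(\mathcal Q_{3/4})$, $\zeta\equiv1$ on $\mathcal Q_{1/2}$, uses $W^{1,p}(\mathcal B_{3/4})\hookrightarrow L^q(\mathcal B_{3/4})$ to dominate the $L^q$-norms of $\nabla\overline\bfu$ and $\overline\pi-(\overline\pi)_{\mathcal B_{3/4}}$ by the $W^{2,p}$- and $W^{1,p}$-quantities already controlled in the first step, and invokes Lemma~\ref{lem:31int} now under the stronger hypothesis $\varphi\in\mathcal M^{2-1/q,q}(\R^2)(\delta)$ (recall $\mathcal M^{2-1/q,q}\hookrightarrow\mathcal M^{2-1/p,p}$ for $q\ge p$, \cite[Proposition~3.5.3]{MaSh}); iterating this step finitely often reaches any $q<\infty$, while for $p\ge3$ the first bootstrap already covers all $q$. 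I do not anticipate a genuine obstacle here---the whole argument is word for word that of Lemma~\ref{lem:32}. The only points needing care are, as there, checking that every coefficient commutator lies in $\mathcal M^{1,\cdot}$ with norm $O(\delta)$ so that it can be absorbed into the left-hand side, and keeping the nested cylinder radii consistent through the finitely many Sobolev iterations.
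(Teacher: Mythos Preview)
Your proposal is correct and follows exactly the approach the paper indicates: it explicitly states that Lemmas~\ref{lem:31int} and~\ref{lem:32int} are obtained by ``arguing exactly as for Lemmas~\ref{lem:31} and~\ref{lem:32}'', and your outline reproduces that argument step by step with the obvious replacements of half-balls by balls and of the half-space Stokes theory by the classical Dirichlet--Stokes maximal regularity on a ball. The details you supply---the localisation via $\zeta$, the multiplier estimates for the commutator terms, the use of Poincar\'e's inequality from the full Dirichlet condition $\overline\bfu|_{\partial\mathcal B_1}=0$, and the Sobolev bootstrap to pass from $p$ to $q$---are precisely those of the boundary proof transcribed to the interior setting.
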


\section{Blow-up \& proof of Theorem \ref{thm:main}}
\label{sec:blowup}
We consider a boundary suitable weak solution $(\overline\bfu,\overline\pi,\varphi)$ to the perturbed Navier--Stokes system \eqref{momref}--\eqref{divref} with forcing $\overline\bff$ as defined in Definition
\ref{def:weakSolutionflat}.
We define
 the excess-functional as
\begin{align*}
\mathscr E_r^{t_0,x_0} (\overline\bfu,\overline\pi)  &:=  \dashint_{\mathcal Q^+_{r}(t_0,x_0)} |\overline\bfu|^3 \dy\ds+ r^3\bigg(\dashint_{\mathcal Q^+_{r}(t_0,x_0)}  |\overline\pi - (\overline\pi)_{\mathcal B^+_r(x_0)}|^{5/3} \dy\ds\bigg)^{\frac{9}{5}}
\end{align*}
and use the short-hand notation $\mathscr E_r(\overline\bfu,\overline\pi)=:\mathscr E_r^{0,0} (\overline\bfu,\overline\pi)$.
The following lemma is the bulk of the partial regularity proof and the claim of Theorem \ref{thm:main} will follow in a standard manner.
\begin{lemma}
\label{Lemma}
Let $p>\frac{15}{4}$
and $\overline\bff\in L^{p}(\mathcal I_1;L^{p}(\mathcal B_1^+))$ be given. Suppose that $\delta>0$ is sufficienlty small.
For any $\tau\in(0,1/2)$ there exist constants
$\varepsilon>0$ (small) and $C_\ast>0$ (large) such the following implication is true for any
triple $(\overline\bfu,\overline\pi,\varphi)$ which is a boundary suitable weak solution to \eqref{momref}--\eqref{divref} in $\mathcal Q_1^+$ in the sense of Definition \ref{def:weakSolution}, where $\varphi\in\mathcal M^{2-1/p,p}(\R^2)(\delta)$ with $\|\varphi\|_{W^{1,\infty}_y}\leq\delta$: Suppose that
\begin{equation}
\label{I1}
\mathscr E_1(\overline\bfu,\overline\pi) + 
\bigg(\ddashint_{\mathcal Q^+_1}  |\overline\bff|^{p} \dy\ds\bigg)^{\frac{3}{p}}
\leq\varepsilon,
\end{equation}
then we have
\begin{equation}
\label{I2}
\mathscr E_\tau (\overline\bfu,\overline\pi) \le C_\ast \tau^{2\alpha} \bigg(\mathscr E_1 (\overline\bfu,\overline\pi) +\bigg(\ddashint_{\mathcal Q^+_1}  |\overline\bff|^{p} \dy\ds\bigg)^{\frac{3}{p}}\bigg),
\end{equation}
where $\alpha=3\big(\frac{2}{5}-\frac{3}{2p}\big)$.
\end{lemma}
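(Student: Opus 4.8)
The proof of Lemma \ref{Lemma} follows the classical blow-up scheme of Caffarelli--Kohn--Nirenberg adapted to the boundary and to the perturbed geometry. I argue by contradiction: suppose the implication fails for some fixed $\tau\in(0,1/2)$. Then there is a sequence $(\overline\bfu_k,\overline\pi_k,\varphi_k)$ of boundary suitable weak solutions to \eqref{momref}--\eqref{divref} on $\mathcal Q_1^+$ with forcings $\overline\bff_k$, with $\varphi_k\in\mathcal M^{2-1/p,p}(\R^2)(\delta)$ and $\|\varphi_k\|_{W^{1,\infty}_y}\le\delta$, such that
\begin{align*}
\lambda_k^2 := \mathscr E_1(\overline\bfu_k,\overline\pi_k) + \bigg(\ddashint_{\mathcal Q^+_1}|\overline\bff_k|^p\dy\ds\bigg)^{3/p}\to 0,\qquad \mathscr E_\tau(\overline\bfu_k,\overline\pi_k) > C_\ast\,\tau^{2\alpha}\lambda_k^2
\end{align*}
for every $k$, with $C_\ast$ to be chosen. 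I then rescale: set $\hat\bfu_k = \lambda_k^{-1}(\overline\bfu_k - (\overline\bfu_k)_{\mathcal B_1^+})$ (subtracting the mean is legitimate since $\overline\bfu_k|_{\partial\mathbb H}=0$ forces control of the mean by the gradient, or one works directly; at the boundary one subtracts nothing and uses the Dirichlet condition) and $\hat\pi_k = \lambda_k^{-1}(\overline\pi_k - (\overline\pi_k)_{\mathcal B_1^+})$. By the energy inequality \eqref{energylocal} and the definition of $\lambda_k$, the sequence $\hat\bfu_k$ is bounded in $L^\infty_tL^2_x\cap L^2_tW^{1,2}_x$ on interior parabolic subcylinders, $\hat\pi_k$ is bounded in $L^{5/3}_tL^{5/3}_x$, and $\lambda_k\hat\bfu_k\to 0$ strongly. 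Crucially, the coefficients $\bfA_{\varphi_k},\bfB_{\varphi_k},J_{\varphi_k}$ are uniformly close to the identity (respectively to $1$) in the relevant Sobolev multiplier norms by \eqref{est:ext}, \eqref{eq:detJ} and \eqref{eq:MS'}, so along a subsequence $\varphi_k\to\varphi_\infty$ in a suitable sense and $\bfA_{\varphi_k}\to\bfA_{\varphi_\infty}$, etc.

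The next step is the compactness argument: using the momentum equation \eqref{momref}, the quadratic term $(\bfB_{\varphi_k}\nabla\hat\bfu_k)\hat\bfu_k$ carries a factor $\lambda_k$ and hence tends to zero, so $\partial_t\hat\bfu_k$ is bounded in a negative-order space and an Aubin--Lions argument gives strong convergence $\hat\bfu_k\to\hat\bfu$ in $L^3_{t,x}$ on interior subcylinders. Passing to the limit, $(\hat\bfu,\hat\pi)$ solves the \emph{linear} perturbed Stokes system \eqref{eq:pertstokes} with coefficients $\bfA_{\varphi_\infty},\bfB_{\varphi_\infty}$, zero divergence data and zero forcing, with the Dirichlet condition on $\mathcal B_1^+\cap\partial\mathbb H$, and $\mathscr E_1(\hat\bfu,\hat\pi)\lesssim 1$. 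Here I invoke the maximal regularity Lemma \ref{lem:31} together with the Caccioppoli-type Lemma \ref{lem:32} (and their interior counterparts Lemmas \ref{lem:31int}--\ref{lem:32int} for points near but not on the flat part) to bootstrap: Lemma \ref{lem:32} upgrades $\hat\bfu$ to $L^{5/3}_tW^{2,p}_x$ and $\hat\pi$ to $L^{5/3}_tW^{1,p}_x$ on $\mathcal Q^+_{1/2}$ with $p>\frac{15}{4}$. In particular $\nabla\hat\pi\in L^{5/3}_tL^p_x$, and Poincaré's inequality in space gives exactly the decay \eqref{eq:1401}, namely $\tau^3\big(\dashint_{\mathcal Q_\tau^+}|\hat\pi-(\hat\pi)_{\mathcal B_\tau^+}|^{5/3}\big)^{9/5}\lesssim\tau^{2\alpha}$ with $\alpha=3(\frac25-\frac{3}{2p})>0$; the corresponding decay of $\dashint_{\mathcal Q_\tau^+}|\hat\bfu|^3$ follows from the continuity of $\hat\bfu$ (itself a consequence of the improved integrability and Sobolev embedding, since $2-3/p>1$). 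Thus $\mathscr E_\tau(\hat\bfu,\hat\pi)\le C\tau^{2\alpha}$ for a constant $C$ depending only on $p,r,\delta$.

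Finally I convert this limiting decay back to the $k$-th element. By the strong convergence $\hat\bfu_k\to\hat\bfu$ in $L^3(\mathcal Q_\tau^+)$ and weak convergence of $\hat\pi_k$ (which suffices for the convex pressure term after subtracting the mean, using lower semicontinuity of the $L^{5/3}$-seminorm), one obtains $\limsup_k\mathscr E_\tau(\hat\bfu_k,\hat\pi_k)\le C\tau^{2\alpha}$, hence $\limsup_k\lambda_k^{-2}\mathscr E_\tau(\overline\bfu_k,\overline\pi_k)\le C\tau^{2\alpha}$. Choosing $C_\ast = 2C$ contradicts $\mathscr E_\tau(\overline\bfu_k,\overline\pi_k)>C_\ast\tau^{2\alpha}\lambda_k^2$, completing the proof. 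The main obstacle is the pressure: unlike in the flat smooth case, the pressure equation involves the non-constant coefficients $\bfB_{\varphi}$, and one must ensure that the limiting pressure $\hat\pi$ genuinely inherits the maximal regularity $L^{5/3}_tW^{1,p}_x$ — this is precisely where the quantitative smallness of $\|\varphi\|_{\mathcal M^{2-1/p,p}}$ and $\|\varphi\|_{W^{1,\infty}_y}$ enters through Lemma \ref{lem:32}, and where the threshold $p>\frac{15}{4}$ is forced, since it is exactly the borderline for $\nabla\hat\pi\in L^{5/3}_tL^p_x$ to yield \eqref{eq:1401} via Poincaré. A secondary technical point is the correct handling of the convergence of the variable coefficients and of the local energy inequality along the sequence, so that the limit is a genuine (boundary suitable) solution of the linear problem; this uses the uniform bounds on $\bfA_{\varphi_k},\bfB_{\varphi_k}$ in the multiplier norm together with the estimates \eqref{eq:Im2}-type bounds referenced in the introduction.
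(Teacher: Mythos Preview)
Your overall strategy---contradiction, blow-up rescaling, energy bounds from \eqref{energylocal}, Aubin--Lions compactness, passage to the limiting linear perturbed Stokes system, and then Lemma~\ref{lem:32} for the decay---matches the paper. There are, however, two genuine problems.

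First, the normalisation is wrong. The excess $\mathscr E_1$ is \emph{cubic} in $\overline\bfu$, so if you set $\lambda_k^2=\mathscr E_1+\dots$ and rescale by $\lambda_k^{-1}$ you obtain $\dashint|\hat\bfu_k|^3\lesssim\lambda_k^{-3}\lambda_k^{2}=\lambda_k^{-1}\to\infty$, and nothing is bounded. The correct choice (as in the paper) is $\lambda_k^3:=\mathscr E_1(\overline\bfu_k,\overline\pi_k)+\|\overline\bff_k\|_{L^p}^3$; then $\dashint|\hat\bfu_k|^3\le1$, $(\dashint|\hat\pi_k|^{5/3})^{9/5}\le1$ and $\|\hat\bfg_k\|_{L^p}\le1$. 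Incidentally the limit equation retains the forcing $J_{\varphi_\infty}\hat\bfg$; it is not zero.

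Second, and more seriously, your pressure argument does not close. You write that weak convergence $\hat\pi_k\rightharpoonup\hat\pi$ together with ``lower semicontinuity of the $L^{5/3}$-seminorm'' yields $\limsup_k\mathscr E_\tau(\hat\bfu_k,\hat\pi_k)\le C\tau^{2\alpha}$. But lower semicontinuity gives the inequality in the \emph{opposite} direction: it bounds the limit norm from above by the $\liminf$ of the sequence norms, not the $\limsup$ of the sequence by the limit. So from the decay of $\hat\pi$ alone you cannot infer decay of $\hat\pi_k$. The paper circumvents this by working at the level of the sequence: one introduces an auxiliary pair $(\tilde\bfv_k,\tilde{\mathfrak q}_k)$ solving the perturbed Stokes system \eqref{eq:pertstokes} on $\mathcal Q_{3/4}^+$ with right-hand side $-\lambda_k(\bfB_{\varphi_k}\nabla\hat\bfu_k)\hat\bfu_k$ and zero initial/boundary data. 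By Lemma~\ref{lem:31} this pair tends to zero in $L^{5/3}_tW^{2,15/14}_x\times L^{5/3}_tW^{1,15/14}_x$ (the convective term is $O(\lambda_k)$). The remainder $(\hat\bfu_k-\tilde\bfv_k,\hat\pi_k-\tilde{\mathfrak q}_k)$ then solves a perturbed Stokes system with right-hand side $J_{\varphi_k}\hat\bfg_k$, and Lemma~\ref{lem:32} gives a \emph{uniform-in-$k$} bound for $\nabla(\hat\pi_k-\tilde{\mathfrak q}_k)$ in $L^{5/3}_tL^p_x(\mathcal Q_{1/2}^+)$. Poincar\'e's inequality applied for each $k$ (not just in the limit) then yields
\[
\tau^{3}\bigg(\dashint_{\mathcal Q^+_\tau}|\hat\pi_k-\tilde{\mathfrak q}_k-(\hat\pi_k-\tilde{\mathfrak q}_k)_{\mathcal B^+_\tau}|^{5/3}\bigg)^{9/5}\lesssim\tau^{2\alpha}
\]
uniformly, and since $\tilde{\mathfrak q}_k\to0$ in $L^{5/3}_tL^{5/3}_x$ (via Sobolev $W^{1,15/14}_x\hookrightarrow L^{5/3}_x$) the same decay holds for $\hat\pi_k$ itself. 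Without this decomposition the contradiction cannot be reached.
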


\begin{proof} We argue by contradiction. Suppose there is $\tau \in (0,\tfrac{1}{2})$, and a sequence of boundary suitable weak solution $(\overline\bfu_m,\overline\pi_m,\varphi_m)$ such that 
\begin{align}\label{eq:varphim}
\|\nabla\varphi_m\|_{L^{\infty}_y}+\|\varphi_m\|_{\mathcal M^{2-1/p,p}(\R^2)}\leq \delta,\quad \varphi_m(0)=0,\quad \nabla\varphi_m(0)=0,
\end{align}
as well as
\begin{align}
\label{I3}
\lambda_m^3:=& \mathscr E_1(\overline\bfu_m,\overline\pi_m)+\|\overline\bff_m\|^3_{ L^{p}(\mathcal I_1;L^{p}(\mathcal B_1^+))} \to 0, \quad \ m \to \infty, \\
\label{I4}
&\mathscr E_\tau(\overline\bfu_m,\overline\pi_m) >  \tfrac{1}{2} \lambda^3_m .
\end{align}
We obtain from \eqref{eq:varphim}
\begin{align}\label{eq:varphim2}
\varphi_m\rightharpoonup^\ast\varphi\quad \text{in}\quad W^{1,\infty}(\R^{2}),\quad \varphi\in\mathcal M^{2-1/p,p}(\R^{2})(\delta),\quad \|\varphi\|_{W^{1,\infty}_y}\leq \delta.
\end{align}
Hence \eqref{est:ext} and \eqref{eq:MS} yields
\begin{align}\label{eq:Phim2}
\bfPhi_m\rightharpoonup^\ast\bfPhi\quad \text{in}\quad W^{1,\infty}(\R^3),\quad \bfPhi\in\mathcal M^{2,p}(\R^3),
\end{align}
where $\bfPhi_m$ is defined in accordance with \eqref{eq:Phi}. Finally, \eqref{eq:detJ} and \eqref{eq:SMPhiPsi} imply
\begin{align}\label{eq:Psim2}
\bfPsi_m\rightharpoonup^\ast\bfPsi\quad \text{in}\quad W^{1,\infty}(\R^3),\quad \bfPsi\in \mathcal M^{2,p}(\R^3),
\end{align}
where $\bfPsi_m$ is the inverse of $\bfPhi_m$.
We define in $\mathcal I_1\times\mathcal B^+_1$
\begin{align*}
\overline\bfv_m&:= \frac{1}{\lambda_m} \overline\bfu_m,\quad
\overline{\mathfrak q}_m:= \frac{1}{\lambda_m } \big[\overline\pi_m- (\overline\pi_m)_{\mathcal B_1^+}\big],\quad
\overline\bfg_m:=\frac{1}{\lambda_m}\overline\bff.
\end{align*}
We get from \eqref{I3} 
the relation
\begin{equation}
\label{I6} \dashint_{\mathcal Q^+_1} |\overline\bfv_{m}|^3 \dz\ds+ \bigg(\dashint_{\mathcal Q^+_1} |\overline{\mathfrak q}_{m}|^{5/3} \dz\ds\bigg)^{\frac{9}{5}}+ \bigg(\dashint_{\mathcal Q^+_1} |\overline\bfg_{m}|^{p} \dz\ds\bigg)^{\frac{3}{p}} =  1,
\end{equation}
such that, after passing to a subsequence,
\begin{align}
\label{I8}
 \overline{\mathfrak{q}}_m &\rightharpoonup  \overline{\mathfrak q} \quad \text{in} \quad L^{5/3} (\mathcal I_1;L^{5/3}(\mathcal B^+_1)),\\
\overline\bfv_m &\rightharpoonup  \overline\bfv\quad\text{in}\quad  L^3(\mathcal I_1;L^3(\mathcal B^+_1))\label{I8'},\\
\overline\bfg_m &\rightharpoonup  \overline\bfg\quad\text{in}\quad  L^p(\mathcal I_1;L^p(\mathcal B^+_1))\label{I8''}.
\end{align}
On the other hand, (\ref{I4}) reads after scaling
\begin{align}
\label{I7}
\dashint_{\mathcal Q_\tau^+} |\overline\bfv_{m} |^3 \dz\ds
 &+\tau^3\bigg(\dashint_{\mathcal I_1}\dashint_{\mathcal B_\tau^+}  |\overline{\mathfrak{q}}_m  - (\overline{\mathfrak q}_{m})_{\mathcal B_\tau^+} |^{5/3} \dz\ds\bigg)^{\frac{9}{5}}> C_{\ast} \tau^{2\alpha} .
\end{align}
In order to proceed we use a scaled version of the equation which reads as
\begin{align}\label{eq:eq}
\begin{aligned}
J_{\varphi_m}\partial_t\overline\bfv_m+\lambda_m(\bfB_{\varphi_m}\nabla\overline\bfv_m)\overline\bfv_m+\Div\big(\bfB_{\varphi_m}\overline{\mathfrak q}_m\big)-\Div\big(\bfA_{\varphi_m}\nabla\overline\bfv_m\big)&=J_{\varphi_m}\overline\bfg_m,\\
\bfB_{\varphi_m}:\nabla{\overline\bfv}_m=0,\quad\overline\bfv_{m}|_{\mathcal B^+_{1}\cap\partial\mathbb H}&=0.
\end{aligned}
\end{align}
By \eqref{energylocal} (replacing $\zeta$ by $\zeta^2$) we have
\begin{align*}
\int_ {\mathcal B^+_1}\frac{1}{2}J_{\varphi_m}\zeta^2&\big| \overline\bfv_m(t)\big|^2\dx+\int_0^t\int_ {\mathcal B^+_1}\zeta^2\bfA_{\varphi_m}\nabla \overline\bfv_m:\nabla \overline\bfv_m\dx\ds\\& \leq\int_0^t\int_{\mathcal B^+_1}\frac{1}{2}J_{\varphi_m} |\overline\bfv_m|^2\partial_t\zeta^2\dx\ds+\int_0^t\int_{\mathcal B^+_1}\frac{1}{2}J_{\varphi_m}| \overline\bfv_m|^2\Delta{\bfPsi_m}\circ\bfPhi_m\cdot\nabla\zeta^2\dx\ds\\&+\int_0^t\int_{\mathcal B^+_1}\frac{1}{2}| \overline\bfv_m|^2\bfA_{\varphi_m}:\nabla^2\zeta^2\dx\ds+\int_0^t\int_{\mathcal B^+_1}\frac{1}{2}\big(\lambda_m|\overline\bfv_m|^2+2\overline{\mathfrak q}_m\big)\overline\bfv_m\cdot\bfB_{\varphi_m}\nabla\zeta^2\Big)\dx\ds\\
&+\int_0^t\int_{\mt}J_{\varphi_m}\zeta^2\overline\bfg_m\cdot\overline\bfv_m\dxs
\end{align*}
for all non-negative $\zeta\in C^\infty_c(\mathcal Q_1)$.  Further, note that by \eqref{eq:Phim2} and \eqref{eq:Psim2} we know that
$J_{\varphi_m}$, $\bfA_{\varphi_m}$ and $\bfB_{\varphi_m}$ are uniformly bounded. Hence the first, third, fourth and fifth term are uniformly bounded by \eqref{I6}. The second term is more delicate and we need to employ Sobolev mutltipliers (see Section \ref{sec:SM} for a brief introduction). We obtain by \eqref{eq:Phim2}, \eqref{eq:Psim2} and \eqref{I6}
\begin{align*}
\int_0^t\int_{\mathcal B^+_1}&\frac{1}{2}J_{\varphi_m}| \overline{\bfv}_m|^2\Delta{\bfPsi_m}\circ\bfPhi_m\cdot\nabla\zeta^2\dx\ds\\&\lesssim \sum_{k=1}^3 \int_0^t\int_{\mathcal B^+_1}| \overline{\bfv}_m||\zeta\overline{\bfv}_m\Delta\bfPsi_m^k\circ\bfPhi_m|\dx\ds\\
&\lesssim \int_0^t\int_{\mathcal B^+_1}| \overline{\bfv}_m|^3\dx\ds+\sum_{k=1}^3 \int_0^t\int_{\mathcal B^+_1}|\zeta\overline{\bfv}_m\Delta\bfPsi_m^k\circ\bfPhi_m|^{3/2}\dx\ds\\
&\lesssim 1+\sum_{k=1}^3 \int_0^t\int_{\bfPhi_m(\mathcal B^+_1)}|(\zeta\overline{\bfv}_m)\circ\bfPsi_m\Delta\bfPsi_m^k|^{3/2}\dx\ds=:1+(I)_m.
\end{align*}
The remaining integral $(I)_m$ can be split into
\begin{align*}
(I)_m&\lesssim \sum_{k=1}^3 \int_0^t\int_{\bfPhi_m(\mathcal B^+_1)}|\nabla((\zeta\overline{\bfv}_m)\circ\bfPsi_m)\nabla\bfPsi_m^k|^{3/2}\dxs\\&+\sum_{k=1}^3 \int_0^t\int_{\bfPhi_m(\mathcal B^+_1)}|\Div((\zeta\overline{\bfv}_m)\circ\bfPsi_m\otimes\nabla\bfPsi_m^k)|^{3/2}\dxs=:(I)_m^1+(I)_m^2.
\end{align*}
By \eqref{eq:Phim2} and \eqref{eq:Psim2} we have for $\kappa>0$ arbitrary
\begin{align}\nonumber
(I)_m^1&\lesssim \sum_{k=1}^3 \int_0^t\int_{\bfPhi_m(\mathcal B^+_1)}|\nabla(\zeta\circ\bfPsi_m)\overline{\bfv}_m\circ\bfPsi_m\nabla\bfPsi_m^k|^{3/2}\dxs\\\nonumber
&+\sum_{k=1}^3 \int_0^t\int_{\bfPhi_m(\mathcal B^+_1)}|\zeta\circ\bfPsi_m\nabla(\overline{\bfv}_m\circ\bfPsi_m)\nabla\bfPsi_m^k|^{3/2}\dxs\\\nonumber
&\lesssim  \int_0^t\int_{\bfPhi_m(\mathcal B^+_1)}|\nabla(\zeta\circ\bfPsi_m)\overline{\bfv}_m\circ\bfPsi_m|^{3/2}\dxs\\\nonumber
&+ \int_0^t\int_{\bfPhi_m(\mathcal B^+_1)}|\zeta\circ\bfPsi_m\nabla(\overline{\bfv}_m\circ\bfPsi_m)|^{3/2}\dxs\\\nonumber
&\lesssim \int_0^t\int_{\mathcal B^+_1}|\overline{\bfv}_m|^{3/2}\dxs+ \int_0^t\int_{\mathcal B^+_1}|\zeta\nabla\overline{\bfv}_m|^{3/2}\dxs\\
&\lesssim 1+\kappa \int_0^t\int_{\mathcal B^+_1}\zeta^2|\nabla\overline{\bfv}_m|^{2}\dxs
\label{eq:Im1}
\end{align}
using also \eqref{I6} in the last step. Moreover, it holds by \eqref{eq:Psim2} and \eqref{lem:9.4.1}\footnote{Note that the assumption $p\geq 3/2$ would have been sufficient here, see \cite[Proposition 3.5.3.]{MaSh} for inclusions of the spaces $\mathcal M^{s,p}$.}
\begin{align}\label{eq:Im2}\begin{aligned}
(I)_m^2&
\lesssim \sum_{k=1}^3 \int_0^t\|(\zeta\overline{\bfv}_m)\circ\bfPsi_m\|_{W^{1,3/2}(\bfPhi_m(\mathcal B^+_1))}^{3/2}\|\bfPsi_m^k\|_{\mathcal M^{2,3/2}(\bfPhi_m(\mathcal B^+_1))}^{3/2}\ds\\
&
\lesssim \sum_{k=1}^3 \int_0^t|\|\zeta\overline{\bfv}_m\|_{W^{1,3/2}(\mathcal B_1^+)}^{3/2}\ds\\
&\lesssim \int_0^t\int_{\mathcal B^+_1}|\overline{\bfv}_m|^{3/2}\dxs+ \int_0^t\int_{\mathcal B^+_1}|\zeta\nabla\overline{\bfv}_m|^{3/2}\dxs\\
&\lesssim 1+\kappa \int_0^t\int_{\mathcal B^+_1}\zeta^2|\nabla\overline{\bfv}_m|^{2}\dxs.
\end{aligned}
\end{align}
Let us choose $\zeta$ such that $\zeta=1$ in $\mathcal Q_{3/4}$.
Finally, since $\bfA_{\varphi_m}$ is elliptic uniformly in $m$ and $J_{\varphi_m}$ strictly positive by \eqref{eq:detJ}, we conclude
\begin{align}\label{eq:regvm}
\overline{\bfv}_m\in L^\infty\big(\mathcal I_{3/4};L^2\big(\mathcal B^+_{3/4}\big)\big)\cap L^2\big(\mathcal I_{3/4};W^{1,2}\big(\mathcal B_{3/4}^+\big)\big)
\end{align}
uniformly in $m$ choosing $\kappa$ small enough.
By Sobolev's inequality we have for $\bfphi\in C^\infty_c(\mathcal Q_{3/4}^+)$
\begin{align*}
\int_{\mathcal Q^+_1}& \partial_t \overline{\bfv}_m\cdot \bfphi\dz\ds\leq \,c\,\Big(\|\overline{\bfv}_m\|_{L^2_z}\|\nabla\overline{\bfv}_m\|_{L^2_z}+\|\overline{\mathfrak{q}}_m\|_{L^{3/2}_z}+\|\overline\bfg_m\|_{L^2_z}\Big)\|\bfphi\|_{W^{2,2}_z}.
\end{align*}
This shows by \eqref{I4} and \eqref{eq:regvm} the boundedness of 
\begin{align}\label{eq:dtvm}
\partial_t \overline{\bfv}_m\in L^{2}(\mathcal I_{3/4};W^{-2,2}(\mathcal B^+_{3/4})).
\end{align}
After passing to suitable subsequences we obtain
\begin{align}
\label{31}\partial_t \overline{\bfv}_m&\rightharpoonup \partial_t\overline{\bfv}\quad\text{in}\quad L^{2}\big(\mathcal I_{3/4};W^{-2,2} 
\big(\mathcal B^+_{3/4}\big)\big).
\end{align}
Now, \eqref{eq:regvm} and \eqref{eq:dtvm} imply
\begin{align}\label{eq:nablavmcomp}
\overline\bfv_m&\rightarrow \overline\bfv\quad\text{in}\quad L^3\big(\mathcal I_{3/4};L^{3}\big(\mathcal B_{3/4}^+\big)\big)
\end{align}
by the Aubin-Lions compactness theorem.
Recalling the definitions of $\bfA_{\varphi_m}$ and $\bfB_{\varphi_m}$ from \eqref{eq:AB} we are able to pass to the limit in \eqref{eq:eq} using the convergences \eqref{eq:Phim2}, \eqref{eq:Psim2}, \eqref{I8}, \eqref{I8'} \eqref{eq:regvm} and \eqref{31}. We obtain (in the sense of distributions on $\mathcal Q^+_{3/4}$)
\begin{align}\label{eq:eqlimit}
\begin{aligned}
J_{\varphi}\partial_t\overline\bfv+\Div\big(\bfB_{\varphi}\overline{\mathfrak q}\big)-\Div\big(\bfA_{\varphi}\nabla\overline\bfv\big)&=J_\varphi\overline\bfg,\\
\bfB_{\varphi}^\top:\nabla{\overline\bfv}=0,\quad\overline\bfv|_{\mathcal B^+_{3/4}\cap\partial\mathbb H}&=0,
\end{aligned}
\end{align}
recalling that the comvective term disappears as $\lambda_m\rightarrow0$. Note that since $p>3$ relation
\eqref{eq:MSb} yields boundedness of $\bfPsi_m$ and $\bfPhi_m$ in $W^{2,p}$ such that  $\nabla\bfPsi_m$ and $\nabla\bfPhi_m$ are pre-compact.

We are now fully prepared to lead
\eqref{I7} to a contradiction applying the regularity theory developed in Section \ref{sec:pert} to \eqref{eq:eq} and \eqref{eq:eqlimit}. We infer from Lemma \ref{lem:32} applied to \eqref{eq:eqlimit}, using also \eqref{eq:varphim2}, that
\begin{align*}
\|\nabla^2\overline\bfv\|_{L^{5/3}(\mathcal I_{1/2};L^p(\mathcal B^+_{1/2}))}&+\|\partial_t\overline\bfv\|_{L^{5/3}(\mathcal I_{1/2};L^p(\mathcal B^+_{1/2}))}\\&\lesssim 
\|\nabla\overline\bfv\|_{L^{5/3}(\mathcal I_{3/4};L^{5/3}(\mathcal B^+_{3/4}))}+\|\overline{\mathfrak q}\|_{L^{5/3}(\mathcal I_{3/4};L^{5/3}(\mathcal B^+_{3/4}))}+\|\overline\bfg\|_{L^{5/3}(\mathcal I_{3/4};L^p(\mathcal B^+_{3/4}))},
\end{align*}
where the right-hand side is finite on account of \eqref{I8}, \eqref{I8''} and \eqref{eq:regvm}. We deduce from parabolic embeddings that $\overline\bfv$ belongs to the class $C^{\beta/2,\beta}_{\mathrm{para}}(\overline{\mathcal Q}^+_{3/4})$, where $\beta=4/5-3/p>\alpha$. Note that $\beta>0$ for
$p>\frac{15}{4}$. 
 This, \eqref{eq:nablavmcomp} and $\overline\bfv|_{\mathcal B^+_{3/4}\cap\partial\mathbb H}=0$ prove
\begin{align}\label{eq:1809}
\lim_{m\rightarrow\infty}\dashint_{\mathcal Q^+_\tau} |\overline\bfv_m|^3 \dz\ds=\dashint_{\mathcal Q^+_\tau} |\overline\bfv|^3 \dz\ds
\leq C_{\overline\bfv} \tau^{3\alpha}
\end{align}
for all $\tau<\frac{1}{2}$ for some constant $C_{\overline\bfv}>0$.
Let us consider the unique solution $(\tilde\bfv_m,\tilde{\mathfrak{q}}_m)$ to
\begin{align}\label{eq:eq'}
\begin{aligned}
J_{\varphi_m}\partial_t\tilde\bfv_m+\Div\big(\bfB_{\varphi_m}\tilde{\mathfrak{q}}_m\big)-\Div\big(\bfA_{\varphi_m}\nabla\tilde\bfv_m\big)&=-\lambda_m(\bfB_{\varphi_m}\nabla\overline\bfv_m)\overline\bfv_m,\\
\bfB_{\varphi_m}^\top:\nabla\tilde{\bfv}_m=0,\quad\tilde\bfv_{m}|_{\mathcal B^+_{3/4}\cap\partial\mathbb H}&=0,\quad\overline{\bfv}_m(-3/4,\cdot)=0,
\end{aligned}
\end{align}
where the first equation is understood in the sense of distributions on $\mathcal Q^+_{3/4}$.
By the regularity theory for the perturbed Stokes system established in Lemma \ref{lem:31} (which applies on account of \eqref{eq:varphim2}) we have in $\mathcal Q^+_{3/4}$
\begin{align}\label{eq:1809b}
\begin{aligned}
\|\nabla^2\tilde\bfv_m\|_{L^{5/3}_tL^{{15/14}}_x}&+\|\nabla\tilde{\mathfrak{q}}_m\|_{L^{5/3}_tL^{{15/14}}_x}\\&\lesssim\lambda_m\|(\bfB_{\varphi_m}\nabla\overline\bfv_m)\overline\bfv_m\|_{L^{5/3}_tL^{{15/14}}_x}\longrightarrow0,\quad m\rightarrow\infty,
\end{aligned}
\end{align}
using \eqref{eq:varphim} and
\eqref{eq:regvm}. Now we consider the difference
$\tilde\bfv^\ast_m=\tilde\bfv_m-\overline\bfv_m$, $\tilde{\mathfrak{q}}_m^\ast=\tilde{\mathfrak{q}}_m-\overline{\mathfrak{q}}_m$ which solves a Stokes system with right-hand side $J_{\varphi_m}\overline\bfg_m$ in $\mathcal Q_{3/4}^+$
and hence satisfies
\begin{align*}
\|\nabla\tilde{\mathfrak{q}}^\ast_m\|_{L^{5/3}_tL^{p}_x}\lesssim\|\overline\bfg_m\|_{L^{5/3}_tL^{p}_x}+\|\nabla\tilde\bfv^\ast_m\|_{L^{5/3}_tL^{{15/14}}_x}+\|\tilde{\mathfrak{q}}^\ast_m-(\tilde{\mathfrak{q}}_m^\ast)_{\mathcal B^+_{3/4}}\|_{L^{5/3}L^{{15/14}}}
\end{align*}
by Lemma \ref{lem:32} using also \eqref{eq:varphim2}. Here the norm on the left-hand side is taken over $\mathcal Q_{1/2}^+$ and the one on the right-hand side over $\mathcal Q_{3/4}^+$.
The first term on the right-hand side is bounded by \eqref{I6} and the second one by \eqref{eq:regvm} and \eqref{eq:1809b} (recall that $\tilde\bfv_m$ has zero boundary conditions). For the third one we have
\begin{align*}
\|\tilde{\mathfrak{q}}^\ast_m-(\tilde{\mathfrak{q}}^\ast_m)_{\mathcal B^+_{3/4}}\|_{L^{5/3}_xL^{{15/14}}_x}&\lesssim\|\overline{\mathfrak{q}}_m-(\overline{\mathfrak{q}}_m)_{\mathcal B^+_{3/4}}\|_{L^{5/3}_tL^{{15/14}}_x}+\|\tilde{\mathfrak{q}}_m-(\tilde{\mathfrak{q}}_m)_{\mathcal B^+_{3/4}}\|_{L^{5/3}_tL^{{15/14}}_x}\\
&\lesssim\|\overline{\mathfrak{q}}_m\|_{L^{5/3}_tL^{{15/14}}_x}+\|\nabla\tilde{\mathfrak{q}}_m\|_{L^{5/3}_tL^{{15/14}}_x},
\end{align*}
which is bounded by \eqref{I8} and \eqref{eq:1809b}. We conclude, that
$
\|\nabla\tilde{\mathfrak{q}}^\ast_m\|_{L^{5/3}_tL^{p}_x}$ (with norm taken over $\mathcal Q_{1/2}^+$) is bounded
which yields
\begin{align*}
\tau^{3}\bigg(\dashint_{\mathcal I_\tau}\dashint_{\mathcal B^+_\tau}  |\tilde{\mathfrak{q}}^\ast_m  - (\tilde{\mathfrak q}_{m}^\ast)_{\mathcal B^+_{\tau}} |^{5/3} \dz\ds\bigg)^{\frac{9}{5}}
&\lesssim\tau^{-3}\bigg(\int_{-\tau}^{\tau}\int_{\mathcal B^+_\tau}  |\nabla\tilde{\mathfrak{q}}^\ast_m |^{5/3} \dz\ds\bigg)^{\frac{9}{5}}\\
&\leq\tau^{-3}
\bigg(\int_{-\tau}^{\tau}\tau^{3-\frac{5}{p}}\bigg(\int_{\mathcal B^+_\tau}  |\nabla\tilde{\mathfrak{q}}^\ast_m |^{p} \dz\bigg)^{\frac{5}{3p}}\ds\bigg)^{\frac{9}{5}}\\
&\lesssim\tau^{2\alpha},
\end{align*}
where $\alpha=3\big(\frac{2}{5}-\frac{3}{2p}\big)$.
Combining this with \eqref{eq:1809b} and Sobolev's embedding $W^{1,15/14}_x\hookrightarrow L^{5/3}_x$ shows
\begin{align*}
\limsup_m \tau^3\bigg(&\dashint_{\mathcal I_\tau}\dashint_{\mathcal Q^+_\tau}  |\overline{\mathfrak{q}}_m  - (\overline{\mathfrak q}_{m})_{\mathcal B^+_{\tau}} |^{5/3} \dz\ds\bigg)^{\frac{9}{5}}\\&=
\limsup_m \tau^3\bigg(\dashint_{\mathcal I_\tau}\dashint_{\mathcal B_\tau^+}  |\tilde{\mathfrak{q}}^\ast_m  - (\tilde{\mathfrak q}^\ast_{m})_{\mathcal B^+_{\tau}} |^{5/3} \dz\ds\bigg)^{\frac{9}{5}}\leq \,C_{\overline{\mathfrak q}}\tau^{2\alpha}
\end{align*}
for some constant $C_{\overline{\mathfrak q}}$.
This, together with \eqref{eq:1809}, contradicts \eqref{I7} if we choose  $C_\ast>C_{\overline\bfv}+C_{\overline{\mathfrak q}}$. 
\end{proof}

\begin{proof}[Proof of Theorem \ref{thm:main}]
By assumption we have
\begin{align*}
r^{-2}\int_{t_0-r^2}^{t_0+r^2}\int_{\Omega\cap \mathcal B_r(x_0)}|\bfu|^3\dxt+\bigg(r^{-5/3}\int_{t_0-r^2}^{t_0+r^2}\int_{\Omega\cap \mathcal B_r(x_0)}|\pi|^{5/3}\dx\dt\bigg)^{\frac{9}{5}}<\varepsilon_0.
\end{align*}
After rotation and translation of the coordinate system we can assume that $(t_0,x_0)=(0,0)$. With the mapping $\bfPhi$ from \eqref{eq:Phi} we turn to the functions $\overline \pi=\pi\circ\bfPhi$ and $\overline{\bfu}=\bfu\circ\bfPhi$ which solve the perturbed system \eqref{momref} in $I\times \mathcal B^+_{\lambda\mathfrak R}$. Here $\mathfrak R$ is given in Remark \ref{rem:cover} and $\lambda$ is chosen such that $\bfPhi(\mathcal B^+_{\lambda\mathfrak R})\subset\Omega\cap B_{\mathfrak R}$ (recall that $\bfPhi$ is Lipschitz). We obtain with $R:=\lambda r$
\begin{align*}
R^3\mathscr E_{R}^{t_0,x_0}&(\overline \bfu,\overline\pi)\lesssim R^{-2}\int_{t_0-R^2}^{t_0+R^2}\int_{ \mathcal B^+_{R}}|\overline\bfu|^3\dxt+R^{-3}\bigg(\int_{t_0-R^2}^{t_0+R^2}\int_{\mathcal B_R^+}|\overline\pi|^{5/3}\dx\dt\bigg)^{\frac{9}{5}}\\&\lesssim
r^{-2}\int_{t_0-r^2}^{t_0+r^2}\int_{\Omega\cap \mathcal B_r(x_0)}|\bfu|^3\dxt+\bigg(r^{-5/3}\int_{t_0-r^2}^{t_0+r^2}\int_{\Omega\cap \mathcal B_r(x_0)}|\pi|^{5/3}\dx\dt\bigg)^{\frac{9}{5}}
  <\varepsilon_{0}.
\end{align*}
Furthermore, it holds
\begin{align*}
R^{9}\bigg(\dashint_{t_0-R^2}^{t_0+R^2}\dashint_{\mathcal B_R^+}|\overline\bff|^{p}\dx\dt\bigg)^{\frac{3}{p}}=\bigg(R^{3p-5}\int_{t_0-R^2}^{t_0+R^2}\int_{\mathcal B_R^+}|\overline\bff|^{p}\dx\dt\bigg)^{\frac{3}{p}}<\varepsilon_0
\end{align*}
provided we choose $R$ small enough. If $\varepsilon_0$ is small enough we obtain
\begin{equation*}
R^3\mathscr E_{R}^{t_0,x_0}(\overline\bfu,\overline\pi) + 
R^9\bigg(\ddashint_{\mathcal Q^+_R}  |\overline\bff|^{p} \dy\ds\bigg)^{\frac{3}{p}}
\leq\varepsilon,
\end{equation*}
such that a scaled version of Lemma \ref{Lemma} applies (note that $\varphi$ must be scaled to $\frac{1}{R}\varphi(R\cdot)$ such that its multiplier norm and Lipschitz constant are still bounded by $\delta$ as long as $R\leq1$). We conclude
that
\begin{equation*}
R^3\mathscr E^{t_0,x_0}_{\tau R} (\overline\bfu,\overline\pi) \le C_\ast \tau^{2\alpha} \bigg(R^3\mathscr E_R^{t_0,x_0} (\overline\bfu,\overline\pi) +R^9\bigg(\ddashint_{\mathcal Q^+_R}  |\overline\bff|^{p} \dy\ds\bigg)^{\frac{3}{p}}\bigg).
\end{equation*}
It is standard to iterate this inequality (see, e.g., \cite[Prop. 2.5]{EsSeSv}), which yields
\begin{align*}
\mathscr E_{\tau^k R}^{t_0,x_0}(\overline \bfu,\overline\pi)\lesssim \tau^{2\alpha k}.
\end{align*}
We trivially obtain
\begin{align}
\widetilde{\mathscr E}_{\tau^k R}^{t_0,x_0}(\overline\bfu,\overline\pi)\lesssim \tau^{2\alpha k},\label{decay1}
\end{align}
where the access $\widetilde{\mathscr E}$ is given by
\begin{align*}
\widetilde{\mathscr E}_{r}^{t_0,x_0} (\overline\bfu,\overline\pi)  &:=  \dashint_{\mathcal Q_{r}(t_0,x_0)\cap (I\times\Omega)} |\overline\bfu-(\overline\bfu)_{\mathcal Q_{r}(t_0,x_0)\cap (I\times\Omega)}|^3 \dy\ds\\&+ r^3\bigg(\dashint_{\mathcal I_r(t_0)}\dashint_{\mathcal B^+_{r}(x_0)}  |\overline\pi - (\overline\pi)_{\mathcal B_r(x_0)\cap\Omega}|^{5/3} \dy\ds\bigg)^{\frac{9}{5}}.
\end{align*}
The decay estimate \eqref{decay1} holds in the centre point $(t_0,x_0)=(0,0)$ of $\mathcal Q_1^+$ but also for points $(t_0,x_0)$ on $I\times \partial\Omega$ which are sufficiently close. We claim that it continues to hold in the interior of $\Omega$. In fact, we can prove a version of Lemma \ref{Lemma} for interior points for which $\mathcal Q_R(t_0,x_0)\subset \mathcal Q^+$. The main difference is that one has to replace in the proof Lemmas \ref{lem:31} and \ref{lem:32} by their corresponding interior versions Lemmas \ref{lem:31int} and \ref{lem:32int}. Combining the interior and the boundary version yields
\begin{align*}
\widetilde{\mathscr E}_{\tau^k R}^{t_0,x_0}(\bfu,\pi)\lesssim r^{2\alpha}.
\end{align*}
This proves that $\overline\bfu\in C^{0,\alpha}$ in a neighborhood of $(0,0)$. Changing coordinates (that is, recalling that $\bfu=\overline\bfu\circ\bfPsi$ where $\bfPsi$ is Lipschitz, cf. \eqref{eq:detJ}) and changing the coordinate system as the case may be, proves $\bfu\in C^{0,\alpha}(\overline{\mathcal U}(t_0,x_0))$ for some  neighborhood $\mathcal U(t_0,x_0)$ of $(t_0,x_0)$.
\end{proof}

%

\section{Concluding remarks \& outlook}

\subsection{The size of the singular set}
\label{sec:setsize}
We comment in this section on the fact that our estimate  on the size of the singular set
in Theorem \ref{thm:main'} (dimension $\leq 5/3$) is weaker 
than that from \cite{SeShSo} for regular boundaries. 
We introduce the dissipation functional
\begin{align*}
\mathscr D_r(\overline\bfu)=\dashint_{\mathcal Q_r^+}|\nabla\overline\bfu|^2\dxt.
\end{align*}
The following implication is proved in \cite[proof of Theorem 5.1]{SeShSo}:
\begin{align}\label{eq:DE}
\sup_{r<1}\frac{1}{r}\int_{\mathcal Q_r^+}|\nabla\overline\bfu|^2\dxt<\varepsilon_{\mathscr D}\quad\Rightarrow \quad \liminf_{r\rightarrow0}r^3\mathscr E_r(\overline\bfu,\overline\pi)<\varepsilon,
\end{align}
where $\varepsilon_{\mathscr D}$ is a small number. The size of the set where the first condition is violated is much smaller than the size of the set where the second one is violated (the parabolic Hausdorff-dimensions are 1 and 5/3). Let us explain the strategy to prove the implication \eqref{eq:DE}.
Transforming the local energy inequality to the flat geometry (as we did in \eqref{energylocal}) and diving by $r$, they proof (in our notation)
\begin{align}\label{eq:2101}
\sup_{\mathcal I_{3r/4}}\frac{1}{r}\int_{\mathcal B_{3r/4}^+}|\overline\bfu|^2\dx+\frac{1}{r}\int_{\mathcal Q_{3r/4}^+}|\nabla\overline\bfu|^2\dxt&\lesssim r^3\mathscr E_r(\overline\bfu,\overline\pi)+\big(r^3\mathscr E_r(\overline\bfu,\overline\pi)\big)^{2/3}.
 \end{align}
Arguing as in \eqref{eq:Im1} and \eqref{eq:Im2} and appreciating the correct scaling (that is, we have $|\nabla\zeta|\lesssim r^{-1}$, $|\partial_t\zeta|\lesssim r^{-2}$ and $|\nabla^2\zeta|\lesssim r^{-2}$), the additional terms
\begin{align*}
 \frac{1}{r^{7/2}}\int_{\mathcal I_r}\int_{\mathcal B^+_r}|\overline\bfu|^{3/2}\dxs\quad\text{and}\quad \frac{1}{r^2}\int_{\mathcal I_r}\int_{\mathcal B^+_r}|\zeta\nabla\overline\bfu|^{3/2}\dxs
\end{align*}
appear in our situation.
The first term can be estimated by 
\begin{align*}
 \frac{1}{r^{2}}\int_{\mathcal I_r}\int_{\mathcal B^+_r}|\overline\bfu|^{3}\dxs+ \frac{1}{r^{5}}\int_{\mathcal I_r}\int_{\mathcal B^+_r}\dxs\lesssim r^3\mathscr E_r(\overline\bfu,\overline\pi)+1
\end{align*}
and, similarly, we have the upper bound
\begin{align*}
 \frac{\kappa}{r}\int_{\mathcal I_r}\int_{\mathcal B^+_r}|\zeta\nabla\overline\bfu|^{2}\dxs+ c_\kappa
\end{align*}
with $\kappa>0$ arbitrary for the second one.
Here the remaining integral can be absorbed in the left-hand side of \eqref{eq:2101} if $\kappa$ is sufficiently small. In conclusion, we obtain an additional (additive) constant on the right-hand side of \eqref{eq:2101}. We could allow a quantity which disappears in the limit $r\rightarrow0$, but a nontrivial constant destroys the argument for the proof of \eqref{eq:DE}.

\subsection{Optimality of the assumptions on the boundary}
\label{sec:opt}
We are able to significantly relax the assumptions regarding the regularity of the boundary made in \cite{SeShSo} ($C^2$ versus $W^{2-1/p,p}$ for some $p>\frac{15}{4}$). However, it is unclear if our assumptions in Theorems \ref{thm:main} and \ref{thm:main'} are optimal or if they can still be weakened. The assumptions on the coefficients for the perturbed Stokes system in Lemma \ref{lem:31} are certainly optimal as the theory of Sobolev multipliers already yields optimal results for the Laplace equation, see \cite[Chapter 14]{MaSh}.
There are various stages in the proof of the blow up lemma (see Lemma \ref{Lemma}), which require restrictions on the regularity of $\bfPhi$ (and hence that of $\varphi$):
\begin{itemize}
\item In order to control the local energy inequality in the flat geometry second derivatives of $\bfPhi_m$ appear (see $(I)_m^2$ in the proof of \eqref{eq:nablavmcomp}). We need $\bfPhi_m\in \mathcal M^{2,3/2}(\delta)$ (and hence $\varphi_m\in \mathcal M^{4/3,3/2}(\delta)$) to estimate the critical term.
We do not expect that it is possible to relax this.
\item We need the limit function $\overline\bfv$ - solution to some perturbed Stokes system - to be H\"older-continuous.
By parabolic embeddings this follows from an estimate in
\begin{align*}
L^{5/3}_tW^{2,p}_x\cap W^{1,5/3}_tL^p_x,
\end{align*}
which requires $\bfPhi_m\in \mathcal M^{2,p}(\delta)$ (and hence $\varphi_m\in \mathcal M^{2-1/p,p}(\delta)$) provided $p>\frac{15}{4}$.
However, we believe that H\"older-continuity of $\overline\bfv$ can be proved under much weaker assumptions. Unfortunately, we were unable to trace a suitable reference. Of course, it will be absolutely necessary to have continuity of solutions of the perturbed Stokes system. The latter is used as a local comparison system and continuity of the solution to the perturbed Navier--Stokes system cannot  be expected otherwise.
\item In order to control the decay of the pressure to arrive at the contradiction in the blow up lemma (see Lemma \ref{Lemma})
we need an estimate for $\nabla\tilde{\mathfrak q}_m$ in $L^{5/3}_t L^p_x$ with some $p>\frac{15}{4}$. For this, the same assumptions on $\bfPhi_m$ as in the previous bullet point are needed. Here,
we expect that it will be very difficult to relax this. Note in particular, that the Stokes system does not allow for estimates on the pressure in spaces with differentiability less than 1 (as in this case the time derivative only exists as a distribution on the solenoidal test-functions). This estimate also motivated the choice of our excess functional for which the time-integrability of the pressure is chosen as large as possible, while it should still coincide with the space integrability (in fact, one could allow a smaller space integrability, but this does not seem to improve the estimate for the size of the singular set).
The more customary choice
\begin{align*}
{\mathscr E}_r^{t_0,x_0} (\overline\bfu,\overline\pi)  &:=  \dashint_{\mathcal Q^+_{r}(t_0,x_0)} |\overline\bfu|^3 \dy\ds+ r^3\dashint_{\mathcal Q^+_{r}(t_0,x_0)}  |\overline\pi - (\overline\pi)_{\mathcal Q^+_r(x_0)}|^{\frac{3}{2}} \dy\ds,
\end{align*}
introduced initially in \cite{L},
requires the more restrictive assumption $p>\frac{9}{2}$.
The original excess functional from \cite{CKN}, which is based on
the function space $L_t^{5/4}L^{5/3}_x$ for the pressure, leads to the condition $p>\frac{15}{2}$ instead.
In general, for $\pi\in L_t^{r_\ast}L^{s_\ast}_x$ one has the condition
$p>\tfrac{3r_\ast}{2 r_\ast-2}$.
\end{itemize}

\subsection{Fluid-structure interaction}
\label{sec:fsi}
In a typical problem from fluid-structure interaction an elastic structure is located at a non-trivial part $\Gamma\subset \partial\Omega$, where $\Omega$ plays the role of a reference geometry.  The deformation of the structure is described by a function $\eta:I\times \Gamma\rightarrow\R$ and the domain $\Omega$ is deformed to 
$\Omega_{\eta(t)}$ defined through its boundary
\begin{align*}
\partial\Omega_{\eta(t)}=\{y+\eta(t,y)\nu(y):\,y\in\partial\Omega\},
\end{align*}
where $\nu$ is the outer unit normal at $\partial\Omega$. The Navier--Stokes equations are posed in the moving space-time cylinder given by
\begin{align*}
I\times\Omega_\eta:=\bigcup_{t\in I}\{t\}\times\Omega_{\eta(t)}
\end{align*}
with some abuse of notation.
The displacement $\eta$ is the solution to a hyperbolic equation, a proto-typical example is
\begin{align}\label{eq:eta}
\varrho_s\partial_t^2\eta+\beta\Delta^2\eta=\bfF\quad\text{in}\quad I\times \Gamma
\end{align}
with $\varrho_s,\beta>0$ describing a linearised Koiter-type shell.
The function $\bfF$ on the right-hand side describes the response of the structure to the surface forces of the fluid imposed by the Cauchy stress. The existence of a weak solution to the coupled system has been shown in \cite{LeRu}. Solutions belong to the energy space which means for the structure that
\begin{align*}
\eta\in W^{1,\infty}(I;L^2(\Gamma))\cap L^\infty(I;W^{2,2}(\Gamma)).
\end{align*}
The second function space (and Sobolev's embedding in two dimensions) indicates that the boundary of $\Omega_{\eta(t)}$ is not even Lipschitz. With only this information at hand we do not expect that a theory similar to that in the present paper is reachable. It has, however, been shown very recently in \cite{MuSc} that solutions satisfy additionally
\begin{align}\label{eq:1401b}
\eta\in W^{1,2}(I;W^{\theta,2}(\Gamma))\cap L^2(I;W^{2+\theta,2}(\Gamma)).
\end{align}
for all $\theta<\frac{1}{2}$. Due to the compact embeddings
\begin{align*}
W^{2+\theta,2}(\Gamma)\hookrightarrow\hookrightarrow W^{1,\infty}(\Gamma),\quad W^{2+\theta,2}(\Gamma)\hookrightarrow\hookrightarrow W^{2-1/p,p}(\Gamma),
\end{align*}
from some $p>\tfrac{15}{4}$ we obtain $\mathrm{Lip}(\partial\Omega_{\eta(t)})\leq\delta$ and $\partial\Omega_{\eta(t)}\in\mathcal M^{2-1/p,p}(\R^2)(\delta)$ as required in Theorems \ref{thm:main} and \ref{thm:main'}.
Unfortunately, this smallness is not uniformly in time. 
While we expect that methods similarly to those developed here can also be applied to study Navier--Stokes equations in moving domains (this is already highly non-trivial and requires some additional research), it is not clear if the regularity from \eqref{eq:1401b} will be sufficient. Some further analysis is required.

\section*{Compliance with Ethical Standards}\label{conflicts}
\smallskip
\par\noindent
{\bf Conflict of Interest}. The author declares that he has no conflict of interest.

\smallskip
\par\noindent
{\bf Data Availability}. Data sharing is not applicable to this article as no datasets were generated or analysed
during the current study.


\begin{thebibliography}{[M]}
\bibitem{Col} D. Albritton, E. Bru\'e, M. Colombo: Non-uniqueness of Leray solutions of the forced Navier-Stokes equations. Ann. of Math. 196, 415--455. (2022)
\bibitem{Br} D. Breit: Existence theory for generalized Newtonian fluids.
Mathematics in Science and Engineering. Elsevier/Academic Press, London. (2017)
\bibitem{Brb} D. Breit: Regularity results in 2D fluid-structure interaction. Preprint at arXiv:2207.14159v2
\bibitem{BV} T. Buckmaster, V. Vicol:
Nonuniqueness of weak solutions to the Navier-Stokes equation.
Ann. of Math. 189, 10--144.  (2019)
\bibitem{BCV} T. Buckmaster, M. Colombo, V. Vicol:
Wild solutions of the Navier-Stokes equations whose singular sets in time have Hausdorff dimension strictly less than 1.
J. Eur. Math. Soc. 24, 3333--3378. (2021)
\bibitem{CKN} L. Caffarelli, R. V. Kohn, L. Nirenberg: Partial regularity of suitable weak solutions of the Navier–Stokes
equations. Comm. Pure Appl. Math. 35, 771--831. (1982)
\bibitem{cm-1} A. Cianchi, V. G. Maz'ya: Global boundedness of the gradient for a class of nonlinear elliptic systems,
  Arch. Rational Mech. Anal. 212, 129--177. (2014)
\bibitem{CiMa} A. Cianchi, V. G. Maz'ya: Second-Order Two-Sided Estimates in
Nonlinear Elliptic Problems. Arch. Rational Mech. Anal. 229, 569--599. (2018)
\bibitem{CL} H. J. Choe, J. L. Lewis: On the singular set in the Navier–Stokes equations. J. Funct. Anal. 175,
348--369. (2000)
\bibitem{DHP} W. Desch, M. Hieber, J. Prüss: $L^p$-theory of the Stokes equation in a half space. J. Evol. Equ. 1, 115--142. (2001)
\bibitem{EsSeSv} L. Escauriaza, G. Seregin, V. \v{S}verak: $L_{3,\infty}$-solutions to the Navier--Stokes equations and backward
uniqueness. Usp. Mat. Nauk 58, 3-44. (2003)
\bibitem{FaSo} R. Farwig, H. Sohr: The stationary and nonstationary Stokes system in exterior domains with nonzero
divergence and nonzero boundary data. Math. Meth. Appl. Sci., 17, 269--291. (1994)
\bibitem{Ga} G. P. Galid: An Introduction to the Mathemaical Theory of the Navier--Stokes equations. Steady-Sate Problems. 2nd Edition. Springer Monographs in Mathematics. Springer, New York Dordrecht Heidelberg London. (2011) 
\bibitem{HS} Hieber, Matthias; Saal, Jürgen: The Stokes equation in the $L^p$-setting: well-posedness and regularity properties. Handbook of mathematical analysis in mechanics of viscous fluids, 117--206, Springer, Cham. (2018)
\bibitem{LaSe} O. A. Ladyzhenskaya, G. A. Seregin: On Partial Regularity of Suitable Weak Solutions to the Three-Dimensional Navier--Stokes equations. J. Math. Fluid Mech. 1, 356--387. (1999)
\bibitem{Ler} J. Leray: Sur le mouvement d'un liquide visqueux emplissant l'espace.
Acta Math. 63, 193--248. (1934)
\bibitem{LeRu} D. Lengeler, M. \Ruzicka; 
Weak Solutions for an Incompressible Newtonian Fluid Interacting with a Koiter Type Shell. 
Arch. Rational Mech. Anal. 211, 205--255. (2014)
\bibitem{L} F.-H. Lin: A new proof of the Caffarelli–Kohn–Nirenberg theorem. Comm. Pure Appl. Math. 51, 241--257.
(1998)
\bibitem{MaSh} V. G. Maz'ya, T. O. Shaposhnikova: Theory of Sobolev multipliers, volume 337 of
Grundlehren der Mathematischen Wissenschaften [Fundamental Principles of Mathematical Sciences].
Springer-Verlag, Berlin. With applications to differential and integral operators. (2009)
\bibitem{MuSc} B. Muha, S. Schwarzacher: Existence and regularity for weak solutions for a fluid interacting with a non-linear shell in 3D. Ann. I. H. Poincar\'e -- AN, DOI: 10.4171/AIHPC/33
\bibitem{RuSi} T. Runst, W. Sickel: Sobolev Spaces of Fractional Order, Nemytskij Operators, and Nonlinear Partial Differential Equations. De Gruyter Series in Nonlinear Analysis and Applications, vol.3, Walter de Gruyter \& Co., Berlin, New York. (1996)
\bibitem{Saal} J. Saal: Stokes and Navier--Stokes equations with Robin boundary conditions in a halfspace.
J. Math. Fluid Mech. 8, 211--241. (2006)
\bibitem{Se1} G. A. Seregin: Local regularity of suitable weak solutions to the Navier--Stokes equations near the boundary,
J. Math. Fluid Mech. 4, 1--29. (2002)
\bibitem{Se2} G. A. Seregin: Some estimates near the boundary for solutions to the nonstationary linearized Navier--Stokes
equations. Zap. Nauchn. Semin. POMI 271, 204--223. (2000)
\bibitem{Se3} G. A. Seregin: Remarks on regularity of weak solutions to the Navier--Stokes system near the boundary.
Zap. Nauchn. Semin. POMI, 295, 168--179. (2003)
\bibitem{SeShSo} G. A. Seregin, T. N. Shilkin, V. A. Solonnikov: 
Boundary partial regularity for the Navier--Stokes equations. 
J. Math. Sci. (N.Y.) 132, 339--358. (2006)
\bibitem{Sh1} V. Scheffer: Hausdorff measure and the Navier–Stokes equations. Comm. Math. Phys. 55, 97--112. (1977)
\bibitem{Sh2} V. Scheffer: 
Partial regularity of solutions to the Navier--Stokes equations.
Pacifc J. Math. 66, 535--552. (1976)
\bibitem{Sh3} V. Scheffer: The Navier--Stokes equations in a bounded domain. Comm. Math. Phys. 73, 1--42. (1980)
\bibitem{Sh4} V. Scheffer: Boundary Regularity for the Navier-Stokes Equations
in a Half-Space. Comm. Math. Phys. 85, 275-299. (1982)
\bibitem{So} V. A. Solonnikov: Estimates for Solutions of Nonstationary Navier-Stokes Equations.
J. Soviet Math. 8, 467--528. (1977)
\bibitem{So2} V. A. Solonnikov: On estimates of solutions of the nonstationary Stokes problem in anisotropic S. L. Sobolev
spaces and estimate of the resolvent of the Stokes problem. Usp. Mat. Nauk, 58, 123--156. (2003)

\bibitem{Tr} H. Triebel, \emph{Theory of Function Spaces}, Modern Birkh\"auser Classics, Springer, Basel. (1983)

\bibitem{Tr2} H. Triebel, \emph{Theory of Function Spaces II}, Modern Birkh\"auser Classics, Springer, Basel. (1992)

\end{thebibliography}
\end{document}